\numberwithin{equation}{section}
\numberwithin{figure}{section}
\newtheorem{theorem}{Theorem}[section]
\newtheorem*{maintheorem}{Main Theorem}
\newtheorem{lemma}[theorem]{Lemma}
\newtheorem{corollary}[theorem]{Corollary}
\newtheorem{proposition}[theorem]{Proposition}
\theoremstyle{definition}
\newtheorem{remark}[theorem]{Remark}
\newtheorem{remarks}[theorem]{Remarks}
\newtheorem{example}[theorem]{Example}
\newtheorem{question}[theorem]{Question}
\newtheorem{fact}[theorem]{Fact}
\def\N{\ensuremath{\mathbb{N}}}
\def\Z{\ensuremath{\mathbb{Z}}}
\def\Q{\ensuremath{\mathbb{Q}}}
\def\R{\ensuremath{\mathbb{R}}}
\def\L{\ensuremath{\mathbb{L}}}
\def\SS{\ensuremath{\mathbb{S}}}
\newcommand{\pa}[1]{\left(#1\right)}
\newcommand{\cpa}[1]{\left\{#1\right\}}
\newcommand{\wt}[1]{\widetilde{#1}}
\newcommand{\tn}[1]{\textnormal{#1}}
\newcommand{\br}[1]{\left[#1\right]}
\newcommand{\fg}[1]{\left\langle #1\right\rangle}
\newcommand{\Int}[1]{\tn{Int} \, #1}
\newcommand{\card}[1]{\left| #1 \right|}
\newcommand{\norm}[1]{\left\| #1 \right\|}
\newcommand{\la}[2]{\mathbb{L}\pa{#1,#2}}
\newcommand{\sst}[1]{\mathbb{S}\pa{#1}}
\newcommand{\ws}[2]{\mathbb{W}\pa{#1,#2}}
\newcommand{\ci}[3]{H^{#1}_{e}\pa{#2;#3}}
\newcommand{\rci}[3]{\wt{H}^{#1}_{e}\pa{#2;#3}}
\newcommand{\rc}[3]{\wt{H}^{#1}\pa{#2;#3}}
\newcommand{\zrc}[2]{\wt{H}^{#1}\pa{#2}}
\newcommand{\zrci}[2]{\wt{H}^{#1}_{e}\pa{#2}}
\newcommand{\cohom}[3]{H^{#1}\pa{#2;#3}}
\newcommand{\ilim}{\mathop{\varinjlim}\limits}
\newcommand{\csi}{\mathbin{\natural}}
\newcommand{\ec}[1]{\llbracket #1 \rrbracket}
\newcommand{\ecl}[1]{\bigl\llbracket #1 \bigr\rrbracket}
\renewcommand{\hom}[3]{\tn{Hom}_{#1}\pa{#2,#3}}
\font\cuf=cmtt8
\newcommand{\curl}[1]{{\cuf #1}}
\begin{document}
\title{Extreme Nonuniqueness of End-Sum}

\author[J.~Calcut]{Jack S. Calcut}
\address{Department of Mathematics\\
         Oberlin College\\
         Oberlin, OH 44074}
\email{jcalcut@oberlin.edu}
\urladdr{\href{http://www.oberlin.edu/faculty/jcalcut/}{\curl{http://www.oberlin.edu/faculty/jcalcut/}}}

\author[C.~Guilbault]{Craig R. Guilbault}
\address{Department of Mathematical Sciences\\
					University of Wisconsin-Milwaukee\\
					Milwaukee, WI 53201}
\email{craigg@uwm.edu}
\urladdr{\href{http://people.uwm.edu/craigg/}{\curl{http://people.uwm.edu/craigg/}}}

\author[P.~Haggerty]{Patrick V. Haggerty}
\email{patrick.v.haggerty@gmail.com}
\urladdr{\href{https://sites.google.com/view/patrickvhaggerty-math/}{\curl{https://sites.google.com/view/patrickvhaggerty-math/}}}

\keywords{End-sum, connected sum at infinity, end-cohomology, proper homotopy, direct limit, infinitely generated abelian group.}
\subjclass[2010]{Primary 57R19; Secondary 55P57.}

\dedicatory{Dedicated to the memory of Andrew Ranicki}

\date{\today}

\begin{abstract}
We give explicit examples of pairs of one-ended, open $4$-manifolds whose
end-sums yield uncountably many manifolds with distinct
proper homotopy types. This answers strongly in the affirmative a conjecture
of Siebenmann regarding the nonuniqueness of end-sums. In
addition to the construction of these examples, we provide a detailed
discussion of the tools used to distinguish them; most importantly, the end-cohomology algebra.
Key to our Main Theorem is an understanding of
this algebra for an end-sum in terms of the
algebras of the summands together with ray-fundamental classes
determined by the rays used to perform the end-sum.
Differing ray-fundamental classes allow us to
distinguish the various examples, but only through the subtle theory of
infinitely generated abelian groups. An appendix is included which contains
the necessary background from that area.
\end{abstract}

\maketitle

\section{Introduction}
\label{sec:introduction}

Our primary goal is a proof of the following theorem, which emphatically
affirms a conjecture of Siebenmann~\cite[p.~1805]{cks} addressed in an earlier article by the
first and third authors of the present paper \cite{calcuthaggerty}.

\begin{maintheorem}
There exist one-ended, open $4$-manifolds $M$ and $N$ such that the end-sum
of $M$ and $N$ yields uncountably many manifolds with distinct
proper homotopy types.
\end{maintheorem}

In addition to definitions, background, and proofs, we carefully develop
the tools needed to distinguish between the aforementioned manifolds.
Foremost among these is the end-cohomology algebra of an end-sum.
We also discuss some intriguing open questions.\\

End-sum is a technique for combining a pair of
noncompact $n$-manifolds in a manner that preserves the essential properties
of the summands. Sometimes called \emph{connected sum at infinity} in the literature,
end-sum is the natural analogue of both the classical connected sum of a
pair of $n$-manifolds and the boundary connected sum of a pair of
$n$-manifolds with boundaries. The earliest intentional use of the end-sum
operation appears to have been by Gompf~\cite{gompf83} in his work on smooth
structures on $\R^4$. Other applications to the study of exotic
$\R^4$s can be found in Bennett~\cite{bennett} and Calcut and Gompf~\cite{calcutgompf}.
End-sum has also been useful in studying contractible $n$-manifolds not homeomorphic
to $\R^n$. This is due to the fact that, unlike with classical
connected sums, the end-sum of a pair of contractible manifolds is again contractible.
For a sampling of such applications in dimension $3$,
see Myers~\cite{myers} and Tinsley and Wright~\cite{TW97};
in dimension $4$, see Calcut and Gompf~\cite{calcutgompf} and Sparks~\cite{sparks};
and in dimensions $n\geq 4$, see Calcut, King, and Siebenmann~\cite{cks}.
For ``incidental'' applications of end-sum to the study of contractible
open manifolds of dimension $n\geq4$, see Curtis and Kwun \cite{curtis-kwun} and Davis
\cite{davis}. These incidental (unintentional) applications are due to the
fact that the interior of a boundary connected sum may also be viewed as an end-sum
of the corresponding interiors.\\

Each variety of connected sum involves arbitrary choices that lead to
questions of well-definedness. For example, to perform a classical
\emph{connected sum} in the smooth category\footnote{Similar definitions,
conventions, and arguments allow for analogous connected sum operations in the
piecewise linear and topological categories.
For the sake of simplicity and focus, we will
restrict our attention to the smooth category.}, one begins with a pair of
smooth, connected, oriented $n$-manifolds, then chooses smooth $n$-balls
$B_{1}\subset\Int{M}$ and $B_{2}\subset\Int{N}$
and an orientation reversing diffeomorphism
$f:\partial B_1\to\partial B_2$. From there, one declares $M\#N$ to be the oriented manifold
$(M-\Int{B_1})\cup_{f}(N-\Int{B_2})$. Provided
$M$ and $N$ are connected, standard but nontrivial tools from differential
topology can be used to verify that, up to diffeomorphism, $M\#N$ does not
depend upon the choices made. See Kosinski~\cite[p.~90]{kosinski} for details. Note that
well-definedness fails if one omits the connectedness hypothesis or ignores orientations.\\

For smooth, oriented $n$-manifolds $M$ and $N$ with nonempty boundaries, a
\emph{boundary connected sum} is performed by choosing smooth
$(n-1)$-balls $B_1\subset\partial M$ and $B_2\subset\partial N$,
and an orientation reversing diffeomorphism $f:B_{1}\to B_{2}$.
Provided $\partial M$ and $\partial N$ are connected, an argument similar to
the one used for ordinary connected sums shows that the adjunction space
$M\cup_{f}N$ (suitably smoothed and oriented) is well-defined up to
diffeomorphism; it is sometimes denoted $M\diamond N$.
Again, see Kosinski~\cite[p.~97]{kosinski} for details.\\

An \emph{end-sum} of a pair of smooth, oriented,
noncompact \hbox{$n$-manifolds} $M$ and $N$ begins with the choice of properly
embedded rays $r\subset\Int{M}$ and $s\subset\Int{N}$
and regular neighborhoods $\nu r$ and $\nu s$ of those
rays. The regular neighborhoods are diffeomorphic to closed upper half-space $\R^n_+$,
so each has boundary diffeomorphic to $\R^{n-1}$.
Choose an orientation reversing diffeomorphism
$f:\partial \nu r\to\partial \nu s$ to obtain the end-sum defined by
$(M-\Int{\nu r})\cup_{f}(N-\Int{\nu s})$; sometimes this end-sum is
denoted informally as $M\natural N$. By an argument resembling those used
above, neither the choice of thickenings of $r$ and $s$ (that is, the regular
neighborhoods $\nu r$ and $\nu s$) nor the diffeomorphism $f$ affect the
diffeomorphism type of $M\natural N$. However, the choices of rays $r$ and
$s$ are another matter. For example, if $M$ has multiple ends, then rays in $M$
tending to different ends of $M$ can yield inequivalent end-sums, even in the
simple $n=2$ case.
For that reason, we focus on the most elusive case where $M$ and $N$ are one-ended.
The existence of knotted rays in $3$-manifolds poses problems unique
to that dimension. Indeed, Myers has exhibited an uncountable collection of
topologically distinct end-sums where both summands are $\R^3$.
So, quickly we arrive at the appropriate question:
\emph{For smooth, oriented, one-ended, open $n$-manifolds $M$ and $N$ where
$n\geq 4$, is end-sum well-defined up to diffeomorphism?}
In many cases the answer is affirmative.
For example, $\R^4\natural\R^4$ is always $\R^4$~\cite{gompf85}.
More generally, the end-sum of $n$-manifolds with Mittag-Leffler ends and $n\geq 4$
depends only on the chosen ends~\cite{calcutgompf}.
Nevertheless, Siebenmann conjectured that counterexamples should exist in general~\cite[p.~1805]{cks}.
His suspicion was confirmed by Calcut and Haggerty~\cite{calcuthaggerty} where,
for numerous pairs of smooth, one-ended, open
$4$-manifolds, it was shown that end-sums can produce non-diffeomorphic (in
fact, non-proper homotopy equivalent) manifolds. Here, we will refine the
techniques employed there to significantly extend that work.\\

As in the earlier work, the primary tool used to distinguish between various
noncompact $n$-manifolds will be their end-cohomology algebras---more
specifically it is the ring structure of that algebra that holds the key.
This is an essential point since every end-sum herein of a given pair of one-ended
manifolds has homology and cohomology groups (absolute and ``end'')
in each dimension that are isomorphic to those
of any other end-sum of the same two manifolds. To allow for differences in these
end-cohomology algebras, it will be necessary to work with manifolds that have
substantial cohomology at infinity. That leads us naturally to the
well-studied, but subtle, area of infinitely generated abelian groups. For the
benefit of the reader with limited background in that area, we have included
an appendix with key definitions and proofs of the fundamental facts used in
this paper. Capturing this subtle algebra in the form of a manifold requires
some care---most significantly, a precise description of the end-cohomology algebra
of an end-sum in terms of the end-cohomology algebras of the summands
with input from so-called ray-fundamental classes
determined by the chosen rays. We provide a careful development of this
topic, as suggested to us by Henry King.\\

Given past applications of end-sum, the following open question deserves attention.

\begin{question}
For contractible, open $n$-manifolds $M$ and $N$ of dimension $n\geq 4$,
is $M\natural N$ well-defined up to diffeomorphism or up to homeomorphism?
\end{question}

Note that, by Poincar\'{e} duality ``at the end'' (see Geoghegan~\cite[p.~361]{geoghegan}), the end-cohomology algebra of a
contractible, open $n$-manifold is isomorphic to the ordinary cohomology algebra of
$S^{n-1}$. So, the methods used in the present paper appear to be of no use in
attacking this problem.\\

Using ladders based on exotic spheres, Calcut and Gompf~\cite[Ex.~3.4(a)]{calcutgompf} gave pairs of smooth,
one-ended, open $n$-manifolds for some $n\geq 7$ whose end-sums are piecewise linearly
homeomorphic but not diffeomorphic.
It is unknown whether examples exist in dimension $n=4$ whose end-sums are homeomorphic but not diffeomorphic.
A key open question is the following (see~\cite[p.~3282]{calcuthaggerty} and~\cite[p.~1303]{calcutgompf}).
\begin{question}
Can the (oriented) end-sums of a smooth, oriented, one-ended, open $4$-manifold $M$
with a fixed oriented exotic $\R^4$ be distinct up to diffeomorphism?
\end{question}
If such examples exist, then it appears that distinguishing them will be difficult~\cite[Prop.~5.3]{calcutgompf}.\\

The outline of this paper is as follows. Section~\ref{sec:conventions} lays
out some conventions, defines end-sum, and discusses end-cohomology.
Section~\ref{sec:stringersladders} defines some manifolds (stringers, surgered stringers, and ladders)
useful for our purposes and computes their end-cohomology algebras.
Section~\ref{sec:lbos} classifies stringers, surgered stringers, and ladder manifolds based on closed surfaces.
Section~\ref{sec:cai_csi} defines ray-fundamental classes and presents a proof of an unpublished result
of Henry King that computes the end-cohomology algebra of a binary end-sum.
Section~\ref{sec:raycharclass} computes ray-fundamental classes in surgered stringers and ladders.
Section~\ref{sec:proofmainthm} proves the Main Theorem.
Appendix~\ref{sec:igagt} presents some relevant results from the theory of infinitely generated abelian groups.

\section*{Acknowledgement}

The authors are indebted to Henry King for valuable conversations and
for allowing us to include an exposition of his unpublished Theorem~\ref{thm:King}.
This research was supported in part by Simons Foundation Grant 427244, CRG.

\section{Conventions, End-sum, and End-cohomology}
\label{sec:conventions}

\subsection{Conventions}\label{ssec:conventions}

Throughout this paper, topological spaces are metrizable, separable, and locally compact.
In particular, each space has a compact exhaustion (see $\S$~\ref{ssec:cohominfty} below).
Our focus is on one-ended examples, but many of the foundational results established here apply to manifolds with arbitrarily many ends.
Unless explicitly stated otherwise, manifolds are smooth, connected, and oriented.
We follow the orientation conventions of Guillemin and Pollack~\hbox{\cite[Ch.~3]{gp}}.
In particular, the boundary $\partial M$ of a manifold $M$ is oriented by the outward normal first convention.
Let $\Int{M}$ denote the manifold interior of $M$.
A manifold without boundary is \emph{closed} if it is compact and is \emph{open} if it is noncompact.
A map of spaces is \emph{proper} provided the inverse image of each compact set is compact.
A \emph{ray} is a smooth proper embedding of the real half-line $[0,\infty)$.
The submanifold $[0,\infty)\subset \R$ is standardly oriented~\cite[Ch.~3]{gp}.
By $M\approx N$ we indicate diffeomorphic manifolds (not necessarily preserving orientation).\\

We will consider rays in manifold interiors as well as neatly embedded rays.
Recall that a manifold $A$ embedded in a manifold $B$ is said to be \emph{neatly embedded} provided $A$ is a closed subspace of $B$,  $\partial A = A \cap \partial B$, and $A$ meets $\partial B$ transversely (see Hirsch~\cite[p.~30]{hirsch} and Kosinski~\cite[pp.~27--31~\&~62]{kosinski}). The closed subspace condition is automatically satisfied by any proper embedding.
Now, let $r\subset M$ be a neatly embedded ray.
We let $\tau r\subset M$ denote a smooth closed tubular neighborhood of $r$ in $M$ as in Figure~\ref{fig:ray_nhbds} (left).
\begin{figure}[htbp!]
    \centerline{\includegraphics[scale=1.0]{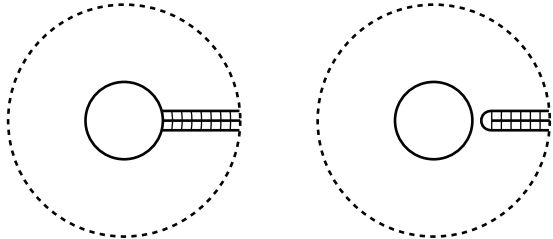}}
    \caption{Neatly embedded ray $r\subset M$ and a smooth closed tubular neighborhood $\tau r\subset M$ (left), and ray $r\subset \Int{M}$ and a smooth closed regular neighborhood $\nu r\subset\Int{M}$ (right).}
\label{fig:ray_nhbds}
\end{figure}
By definition, a \emph{closed tubular neighborhood} is a restriction of an open tubular neighborhood (see Hirsch~\cite[pp.~109--118]{hirsch} and Kosinksi~\cite[pp.~46--53]{kosinski}); we will always assume that closed tubular neighborhoods are restrictions of neat tubular neighborhoods. In particular, the disk bundle $\tau r$ over $r$ meets $\partial M$ in exactly the disk over the endpoint $0$.
Closed tubular neighborhoods of $r$ in $M$ are unique up to ambient isotopy fixing $r$.
Next, let $r\subset\Int{M}$ be a ray. We let $\nu r\subset \Int{M}$ denote a smooth closed regular neighborhood of $r$ in $\Int{M}$ as in Figure~\ref{fig:ray_nhbds} (right). Existence and ambient uniqueness of smooth closed tubular neighborhoods and collars imply the same results for smooth closed regular neighborhoods~\cite[pp.~1815]{cks}.\\

\subsection{End-sum}\label{ssec:csi}

We now define the end-sum of two noncompact manifolds.
An \emph{end-sum pair} $(M,r)$ consists of a smooth, oriented, connected, noncompact manifold $M$ together with a ray $r\subset\Int{M}$.
We allow $M$ to have arbitrarily many ends.
Consider two end-sum pairs $(M,r)$ and $(N,s)$ where $M$ and $N$ have the same dimension $m\geq 2$.
The \emph{end-sum} of $(M,r)$ and $(N,s)$, which we denote by $(M,r) \csi (N,s)$, is defined as follows.
Choose smooth closed regular neighborhoods $\nu r \subset \Int{M}$ and $\nu s \subset \Int{N}$ of $r$ and $s$ respectively.
Delete the interiors of these regular neighborhoods and glue the resulting manifolds $M - \Int \nu r$ and $N - \Int \nu s$ along their boundaries $\partial \nu r \approx \R^{m-1}$ and $\partial \nu s\approx \R^{m-1}$
by an orientation reversing diffeomorphism as in Figure~\ref{fig:csi}.
\begin{figure}[htbp!]
    \centerline{\includegraphics[scale=1.0]{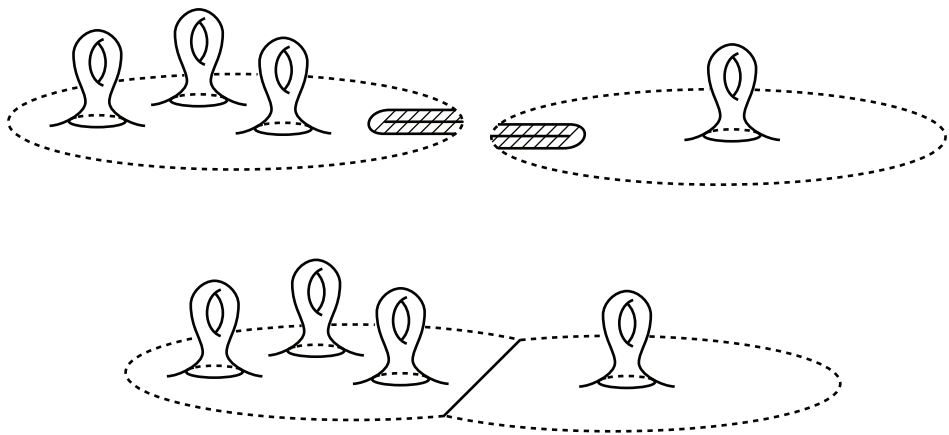}}
    \caption{End-sum of two manifold/ray pairs.}
\label{fig:csi}
\end{figure}

\begin{remark}
The manifold $(M,r) \csi (N,s)$ is smooth and oriented, and its diffeomorphism type is independent of the choices of the regular neighborhoods and the glueing diffeomorphism~\cite[$\S$~2]{calcuthaggerty}. While this binary end-sum is sufficient for our purposes, we mention that it is a special case of a more general operation that also applies to piecewise linear and topological manifolds and allows countably many summands~\cite{cks}. Alternatively, one may view the end-sum operation as the addition of a so-called $1$-handle at infinity~\cite{calcutgompf}.
\end{remark}

\subsection{End-cohomology}\label{ssec:cohominfty}

Throughout, $R$ denotes a commutative, unital ring.
We use the singular theory for ordinary (co)homology.
We suppress the coefficient ring when that ring is $\Z$.\\

We will distinguish noncompact manifolds by the isomorphism types of their (graded) end-cohomology algebras.
Just as cohomology is a homotopy invariant of spaces,
end-cohomology is a proper homotopy invariant of spaces.
We adopt the direct limit approach to end-cohomology.
An alternative may be found in several places including Conner~\cite{conner},
Raymond~\cite{raymond}, Massey~\cite[Ch.~10]{massey}, and Geoghegan~\cite[Ch.~12]{geoghegan}.
The alternative approach provides some advantages in terms of establishing the foundations
of end-cohomology and comparing it to other cohomology theories.
On the other hand, we find the direct limit approach invaluable for carrying out concrete calculations.
For the benefit of the reader---and since the arguments are straightforward and satisfying---we take the time
to develop the basics of end-cohomology straight from the direct limit definition\footnote{For background on proper homotopy,
see Guilbault~\cite[pp.~58--59]{guilbault} and Hughes and Ranicki~\cite[Ch.~3]{hr}. For background on direct systems and direct limits,
see Eilenberg and Steenrod~\cite[Ch.~8]{eilenbergsteenrod}, Massey~\cite[Appendix]{massey}, and Rotman~\cite[Ch.~6.9]{rotman}.}.\\

Fix a topological space $X$. Define the poset $\pa{\mathcal{K},\leq}$
where $\mathcal{K}$ is the set of compact subsets of $X$ and $K\leq K'$ means \mbox{$K\subseteq K'$}.
We have a direct system
of graded $R$-algebras $H^*(X-K;R)$ where $K\in\mathcal{K}$.
The morphisms of this direct system are restrictions induced by inclusions.
Define $\ci{\ast}{X}{R}$, the \emph{end-cohomology algebra}, to be the direct limit of this direct system.
For the relative version, let $(X,A)$ be a \emph{closed pair}, namely a space $X$ together with a closed subspace $A\subseteq X$.
Regard $X$ as the closed pair $(X,\emptyset)$.
Consider the direct system $H^*(X-K,A-K;R)$ where $K\in\mathcal{K}$ and the morphisms are restrictions.
Define $\ci{\ast}{X,A}{R}$ to be the direct limit of this direct system.
Similarly, reduced end-cohomology $\rci{\ast}{X,A}{R}$ is the 
direct limit of the direct system $\rc{\ast}{X-K,A-K}{R}$.\\

We employ a standard explicit model of the direct limit~\cite[p.~222]{eilenbergsteenrod} where an element of $\ci{\ast}{X,A}{R}$
is represented by an element of $H^*(X-K,A-K; R)$ for some compact $K$.
Two representatives $\alpha \in H^*(X-K,A-K; R)$ and $\alpha' \in H^*(X-K',A-K'; R)$
are equivalent if they have the same restriction in some $H^*(X-K'',A-K'';R)$, where $K,K'\subseteq K''$.\\

Recall that a \emph{compact exhaustion} of $X$ is a nested sequence
$K_1 \subseteq K_2 \subseteq \cdots$ of compact subsets of $X$ whose union equals $X$ and where
$K_j\subseteq K_{j+1}^\circ$ for each $j$.
Here, $K_{j+1}^\circ$ denotes the topological interior of $K_{j+1}$ as a subspace of $X$.
By our hypotheses on spaces, each space has a compact exhaustion (see Hocking and Young~\hbox{\cite[p.~75]{hockingyoung}}).
Let $\cpa{K_j}$ be any compact exhaustion of $X$.
As $\cpa{K_j}$ is cofinal in $\mathcal{K}$,
we may compute $\ci{\ast}{X,A}{R}$ using the direct system indexed by $\Z_{>0}$.
Namely, there is a canonical isomorphism (see~\cite[p.~224]{eilenbergsteenrod})
\begin{equation}\label{eq:direct_limit}
    \ci{\ast}{X,A}{R} \cong \ilim H^*(X - K_j,A-K_j;R)
\end{equation}
We claim that we may delete instead the topological interior $K_j^\circ$ of $K_j$ to obtain the canonical isomorphism
\begin{equation}\label{eq:direct_limit_int}
    \ci{\ast}{X,A}{R} \cong \ilim H^*(X - K_j^\circ,A-K_j^\circ;R)
\end{equation}
To prove the claim, we show that the right hand sides of~\eqref{eq:direct_limit} and~\eqref{eq:direct_limit_int}
are canonically isomorphic.
Let $G_j$ and $G'_j$ denote the $j$th terms in these direct systems.
The inclusions $K_1^\circ \subseteq K_1 \subseteq K_2^\circ \subseteq K_2 \subseteq \cdots$
induce the obvious maps between these direct systems
and give the commutative diagram
\begin{equation}\label{commdiagram}\begin{split}
\xymatrix{
G_1 \ar[r] \ar[dr] & G_2 \ar[r] \ar[dr] & G_3 \ar[r] \ar[dr] & \cdots\\
G'_1 \ar[r] \ar[u] & G'_2 \ar[r] \ar[u] & G'_3 \ar[r] \ar[u] & \cdots
}
\end{split}
\end{equation}
We get induced maps $\phi: \ilim G_j \to \ilim G'_j$ and $\psi: \ilim G'_j \to \ilim G_j$
between the direct limits~\cite[p.~223]{eilenbergsteenrod}.
It is a simple exercise to prove that $\psi\circ\phi$ and $\phi\circ\psi$ are the respective identity maps
(use Eilenberg and Steenrod~\cite[pp.~220--223]{eilenbergsteenrod}).
This proves the claim.
Passing to a subsequence in either~\eqref{eq:direct_limit} or~\eqref{eq:direct_limit_int}
canonically preserves the isomorphism type of the direct limit since these isomorphisms are independent of the
choice of compact exhaustion (see also~\cite[p.~224]{eilenbergsteenrod}).\\

A \emph{proper map of closed pairs} is a map of closed pairs $f:(X,A)\to (Y,B)$ such that $f:X\to Y$ is proper;
it follows that the restriction $\left.f\right|:A\to B$ is proper.
For example, if $(X,A)$ is a closed pair, then the inclusions
$(A,\emptyset)\hookrightarrow(X,\emptyset)$ and $(X,\emptyset)\hookrightarrow(X,A)$ are proper maps of closed pairs.
Each such map $f$ induces a morphism
\[
f^*_e : \ci{*}{Y,B}{R}\to \ci{*}{X,A}{R}
\]
Indeed, let $\cpa{L_j}$ be a compact exhaustion of $Y$.
Observe that $\cpa{K_j:=f^{-1}(L_j)}$ is a compact exhaustion of $X$.
In particular, $K_j \subseteq K_{j+1}^\circ$.
We have the commutative diagram
\begin{equation}\label{commdiagram2}\begin{split}
\xymatrix{
H^*(X-K_1,A-K_1;R)  \ar[r]  & H^*(X-K_2,A-K_2;R) \ar[r]  & \cdots \\
H^*(Y-L_1,B-L_1;R)  \ar[r] \ar[u]  & H^*(Y-L_2,B-L_2;R) \ar[r] \ar[u]  & \cdots 
}
\end{split}
\end{equation}
The rows are direct systems and the vertical maps are induced by the restrictions
\[
\left.f\right|_j : (X-K_j,A-K_j) \to (Y-L_j,B-L_j)
\]
These maps induce the morphism $f^*_e$ on the direct limits which are identified with the
respective end-cohomology algebras by~\eqref{eq:direct_limit}.
The same argument applies to reduced cohomology.
It is straightforward to verify that $\tn{id}^*_e =\tn{id}$ and
$(g\circ f)^*_e = f^*_e \circ g^*_e$.\\

\begin{lemma}\label{properhomotopic}
Let $f,g:(X,A)\to (Y,B)$ be proper maps of closed pairs.
If $f$ and $g$ are properly homotopic, then $f^*_e =g ^*_e$.
\end{lemma}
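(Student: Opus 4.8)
The plan is to reduce the lemma to ordinary homotopy invariance of relative singular cohomology, applied levelwise in the direct systems that compute end-cohomology, with the proper homotopy supplying the levelwise homotopies once the compact sets in $X$ are chosen compatibly with those in $Y$.

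First I would fix a compact exhaustion $\{L_j\}$ of $Y$ and a proper homotopy $H : (X\times I, A\times I)\to (Y,B)$ with $H_0 = f$ and $H_1 = g$. The key observation is that, since $H$ is proper and $I$ is compact, each $H^{-1}(L_j)$ is a compact subset of $X\times I$; setting $K_j := \pi_X\bigl(H^{-1}(L_j)\bigr)$, where $\pi_X : X\times I\to X$ is the projection, we obtain compact sets with $K_j\subseteq K_{j+1}$, $\bigcup_j K_j = X$, and $\{K_j\}$ cofinal in $\mathcal{K}$ (given compact $C\subseteq X$, the set $H(C\times I)$ is compact, hence lies in some $L_j$, so $C\subseteq K_j$). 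By the cofinality discussion surrounding \eqref{eq:direct_limit}, the direct system $\{H^*(X-K_j, A-K_j; R)\}$ computes $\ci{*}{X,A}{R}$; and since $f^{-1}(L_j)\subseteq K_j$ and $g^{-1}(L_j)\subseteq K_j$, a routine check (independence of the direct limit from the choice of cofinal subsystem) shows that $f^*_e$ and $g^*_e$ are computed, respectively, by the maps induced on direct limits by the restrictions $\left.f\right|_j, \left.g\right|_j : (X-K_j, A-K_j)\to (Y-L_j, B-L_j)$.

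Next I would note that $H^{-1}(L_j)\subseteq K_j\times I$ by construction, so $H$ restricts to a homotopy of closed pairs $H_j : (X-K_j, A-K_j)\times I\to (Y-L_j, B-L_j)$ from $\left.f\right|_j$ to $\left.g\right|_j$. Ordinary homotopy invariance of relative singular cohomology then gives $(\left.f\right|_j)^* = (\left.g\right|_j)^*$ as maps $H^*(Y-L_j, B-L_j; R)\to H^*(X-K_j, A-K_j; R)$ for every $j$, and these equalities are compatible with the restriction maps of both direct systems (a commutative ladder of the shape of \eqref{commdiagram2}). Passing to direct limits yields $f^*_e = g^*_e$, and the reduced case is identical. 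The main obstacle — essentially the only nontrivial point — is producing a single exhaustion of $X$ that simultaneously governs $f$, $g$, and $H$; this is exactly where properness of the homotopy (together with compactness of $I$) is used, via the compact sets $\pi_X(H^{-1}(L_j))$.
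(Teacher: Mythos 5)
Your proposal is correct and is essentially the paper's own argument: both define $K_j$ as the projection to $X$ of $H^{-1}(L_j)$, restrict the proper homotopy to levelwise homotopies $(X-K_j,A-K_j)\times I\to(Y-L_j,B-L_j)$, apply ordinary homotopy invariance, and pass to direct limits. The only cosmetic difference is that you verify cofinality of $\{K_j\}$ (and explicitly note $f^{-1}(L_j),g^{-1}(L_j)\subseteq K_j$), whereas the paper observes that $\{K_j\}$ is a compact exhaustion because projections are open maps; both routes are fine.
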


\begin{proof}
By hypothesis, there is a proper homotopy $F:X\times I \to Y$ 
such that $F_0=f$, $F_1=g$, and $F_t(A)\subseteq B$ for all $t\in I$.
Let $\tn{pr}_1 :X\times I \to X$ be projection.
Let $\cpa{L_j}$ be a compact exhaustion of $Y$.
So, $F^{-1}(L_j)\subseteq X\times I$ and
$K_j:=\tn{pr}_1 (F^{-1}(L_j))\subseteq X$ are compact.
As projection maps are open, $\cpa{K_j}$ is a 
compact exhaustion of $X$.
For each $j$, we have the restriction
\[
\left. F\right|_j : (X-K_j)\times I \to Y-L_j
\]
which is a homotopy between the restrictions
\begin{align*}
\left. f \right|_j &: X-K_j \to Y-L_j\\
\left. g \right|_j &: X-K_j \to Y-L_j
\end{align*}
Hence, $\left. f \right|_j^* =\left. g \right|_j^*$ in~\eqref{commdiagram2}.
Therefore, the induced morphisms on direct limits are equal as desired.
\end{proof}

\begin{corollary}\label{phiec}
If the closed pairs $(X,A)$ and $(Y,B)$ are proper homotopy equivalent by the proper maps
$f:(X,A)\to (Y,B)$ and $g:(Y,B)\to (X,A)$,
then $f^*_e$ and $g ^*_e$ are graded $R$-algebra isomorphisms.
\end{corollary}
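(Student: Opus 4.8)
The plan is to derive this corollary formally from Lemma~\ref{properhomotopic} together with the functoriality identities $\tn{id}^*_e=\tn{id}$ and $(g\circ f)^*_e=f^*_e\circ g^*_e$ recorded just above. First I would unwind the meaning of the hypothesis: to say that $(X,A)$ and $(Y,B)$ are \emph{proper homotopy equivalent} via $f:(X,A)\to(Y,B)$ and $g:(Y,B)\to(X,A)$ means precisely that both composites are properly homotopic to the respective identity maps of closed pairs, i.e.\ $g\circ f$ is properly homotopic to $\tn{id}_{(X,A)}$ through a proper homotopy carrying $A$ into $A$, and $f\circ g$ is properly homotopic to $\tn{id}_{(Y,B)}$ through a proper homotopy carrying $B$ into $B$.

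Applying Lemma~\ref{properhomotopic} to each of these proper homotopies gives $(g\circ f)^*_e=(\tn{id}_{(X,A)})^*_e$ and $(f\circ g)^*_e=(\tn{id}_{(Y,B)})^*_e$. Combining with $\tn{id}^*_e=\tn{id}$ and contravariant functoriality, I obtain $f^*_e\circ g^*_e=(g\circ f)^*_e=\tn{id}$ on $\ci{*}{X,A}{R}$ and, symmetrically, $g^*_e\circ f^*_e=(f\circ g)^*_e=\tn{id}$ on $\ci{*}{Y,B}{R}$. Hence $f^*_e$ and $g^*_e$ are mutually inverse bijections of graded $R$-modules.

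It remains to promote ``$R$-module isomorphism'' to ``graded $R$-algebra isomorphism.'' Here I would note that at each finite stage the restriction map $\left.f\right|_j^{*}:H^*(Y-L_j,B-L_j;R)\to H^*(X-K_j,A-K_j;R)$ appearing in~\eqref{commdiagram2} respects cup products and gradings, so the vertical maps there form a morphism of direct systems of graded $R$-algebras; passing to direct limits, $f^*_e$ is a morphism of graded $R$-algebras, and likewise for $g^*_e$. Since a two-sided inverse of an algebra homomorphism is automatically an algebra homomorphism, both $f^*_e$ and $g^*_e$ are graded $R$-algebra isomorphisms. The only point deserving attention — and it is minor — is this last one: one must be sure the direct limit defining $\ci{*}{\,\cdot\,}{R}$ has been formed in the category of graded $R$-algebras, so that cup products survive in the limit and the connecting maps are algebra maps; granting that (as set up in $\S$\ref{ssec:cohominfty}), the corollary is immediate.
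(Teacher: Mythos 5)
Your argument is essentially the paper's own proof: unwind the proper homotopy equivalence of closed pairs, apply Lemma~\ref{properhomotopic} to both composites, and combine with $\tn{id}^*_e=\tn{id}$ and $(g\circ f)^*_e=f^*_e\circ g^*_e$ to conclude that $f^*_e$ and $g^*_e$ are mutually inverse. Your closing paragraph on cup products and gradings only makes explicit what the setup of $\S$\ref{ssec:cohominfty} (direct limits of graded $R$-algebras with restriction maps as morphisms) already guarantees, so it is a correct filling-in of a detail rather than a different route.
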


\begin{proof}
By hypothesis, $g\circ f$ is proper homotopy equivalent to $\tn{id}_X$ by a
proper homotopy sending $A$ into $B$ at all times, and similarly for
$f\circ g$ and $\tn{id}_Y$.
By Lemma~\ref{properhomotopic} and the preceding observations, 
$f^*_e \circ g^*_e = \tn{id}$ and $g^*_e \circ f^*_e = \tn{id}$.
\end{proof}

\begin{lemma}\label{longexactclosedpair}
For each closed pair $(X,A)$ there is the induced long exact sequence
\[
\cdots \to \ci{k}{X,A}{R} \to \ci{k}{X}{R} \to \ci{k}{A}{R} \to \ci{k+1}{X,A}{R} \to \cdots
\]
\end{lemma}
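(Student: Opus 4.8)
The plan is to build the long exact sequence for end-cohomology by taking a direct limit of the ordinary long exact sequences of the pairs $(X-K_j, A-K_j)$ as $K_j$ ranges over a fixed compact exhaustion $\{K_j\}$ of $X$. First I would note that for each $j$, since $A$ is closed in $X$, the triple $(X-K_j, A-K_j, \emptyset)$ gives the usual long exact sequence of the pair
\[
\cdots \to H^k(X-K_j, A-K_j; R) \to H^k(X-K_j; R) \to H^k(A-K_j; R) \to H^{k+1}(X-K_j, A-K_j; R) \to \cdots
\]
Here I should be slightly careful: $K_j$ need not be contained in $A$, so ``$A-K_j$'' means $A \cap (X-K_j) = A - (K_j\cap A)$, and $\{K_j \cap A\}$ is a compact exhaustion of $A$ (it is cofinal in the compact subsets of $A$ since any compact $C\subseteq A$ lies in some $K_j$ and hence in $K_j\cap A$). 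So the direct limit of the middle terms along $j$ computes $\ci{k}{A}{R}$ correctly, using the cofinality remark already established in $\S\ref{ssec:cohominfty}$.

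Next I would observe that the connecting homomorphisms and the restriction-induced maps in the long exact sequence of a pair are natural with respect to inclusions $(X-K_{j+1}, A-K_{j+1}) \hookrightarrow (X-K_j, A-K_j)$ — this is the standard naturality of the long exact sequence of a pair under maps of pairs. Consequently the three rows (indexed by $k-1$, $k$, $k+1$, say, or more precisely the whole ladder of long exact sequences for varying $j$) assemble into a commutative diagram of direct systems over the directed set $\Z_{>0}$, in which each row is exact. The key algebraic input is then the exactness of the direct limit functor: a filtered colimit of exact sequences of $R$-modules is exact (see, e.g., Rotman~\cite[Ch.~6.9]{rotman} or Eilenberg and Steenrod~\cite[Ch.~8]{eilenbergsteenrod}). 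Applying this to our ladder, and identifying the three resulting colimits with $\ci{*}{X,A}{R}$, $\ci{*}{X}{R}$, and $\ci{*}{A}{R}$ via the canonical isomorphism~\eqref{eq:direct_limit} (and its evident analogue for $A$), yields the asserted long exact sequence. The connecting map $\ci{k}{X,A}{R}\to\ci{k+1}{X,A}{R}$ is the colimit of the connecting maps $\delta_j$.

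I do not expect a serious obstacle here; the statement is essentially a formal consequence of (i) the ordinary long exact sequence of a pair, (ii) its naturality, and (iii) exactness of filtered colimits. The one point that deserves a sentence of care is the cofinality/indexing issue noted above — making sure that deleting $K_j$ (rather than $K_j \cap A$) from $A$ still produces a compact exhaustion of $A$ and hence computes $\ci{*}{A}{R}$, which is immediate. A reader wanting the reduced version would run the identical argument with the long exact sequence of the pair in reduced cohomology, noting that $A-K_j$ is nonempty for large $j$ (or allowing the degenerate case), so the same conclusion holds for $\rci{*}{\cdot}{R}$.
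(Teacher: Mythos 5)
Your proposal is correct and follows essentially the same route as the paper: take the ordinary long exact sequences of the pairs $(X-K_j,A-K_j)$ over a compact exhaustion, note that $\{A\cap K_j\}$ exhausts $A$ since $A$ is closed, and pass to direct limits using naturality and the exactness of the direct limit functor. The extra sentence you add on the cofinality/indexing point is exactly the observation the paper makes in one line, so nothing is missing or superfluous.
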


\begin{proof}
Let $\cpa{K_j}$ be a compact exhaustion of $X$.
As $A$ is closed in $X$, $\cpa{A\cap K_j}$ is a compact exhaustion of $A$.
Consider the biinfinite commutative diagram whose $j$th column is the long exact sequence for the pair \hbox{$(X-K_j,A-K_j)$}.
The rows in this diagram are the various direct systems
$H^k(A-K_j;R)$, $H^k(X-K_j;R)$, and \hbox{$H^k(X-K_j,A-K_j;R)$}.
The maps in this diagram between successive rows induce maps of their direct limits.
The resulting sequence of direct limits is exact since the
direct limit is an exact functor in the category of $R$-modules (see~\cite[p.~225]{eilenbergsteenrod} or \cite[p.~389]{massey}).
\end{proof}

A \emph{closed triple} $(X,A,B)$ is a space $X$ together with subspaces $B\subseteq A \subseteq X$ each closed in $X$.
With the long exact sequences for the closed pairs $(A,B)$, $(X,B)$, and $(X,A)$ in hand,
a well-known diagram chase~\hbox{\cite[p.~24]{eilenbergsteenrod}} proves the following.

\begin{corollary}\label{letriple}
For each closed triple $(X,A,B)$ there is the induced long exact sequence
\[
\cdots \to \ci{k}{X,A}{R} \to \ci{k}{X,B}{R} \to \ci{k}{A,B}{R} \to \ci{k+1}{X,A}{R} \to \cdots
\]
\end{corollary}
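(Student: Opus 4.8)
The plan is to obtain the sequence by knitting together, in the classical braid pattern for a triple, the three long exact sequences that Lemma~\ref{longexactclosedpair} supplies for the closed pairs $(A,B)$, $(X,B)$, and $(X,A)$. Since $(X,A,B)$ is a closed triple we have $B\subseteq A\subseteq X$ with $A$ and $B$ both closed in $X$, so each of $(A,B)$, $(X,B)$, $(X,A)$ is a closed pair and Lemma~\ref{longexactclosedpair} applies to it. The relevant inclusions $(X,B)\hookrightarrow(X,A)$ and $(A,B)\hookrightarrow(X,B)$ are proper maps of closed pairs (the underlying maps are the identity of $X$ and the inclusion $A\hookrightarrow X$, both proper, and they respect the subspaces), so by the functoriality of $f\mapsto f^*_e$ recorded just before Lemma~\ref{properhomotopic} these maps induce the first two arrows $\ci{k}{X,A}{R}\to\ci{k}{X,B}{R}$ and $\ci{k}{X,B}{R}\to\ci{k}{A,B}{R}$ of the asserted sequence, and they knit the three ingredient sequences into a commuting diagram.

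Next I would define the connecting homomorphism $\ci{k}{A,B}{R}\to\ci{k+1}{X,A}{R}$ to be the composite of the map $\ci{k}{A,B}{R}\to\ci{k}{A}{R}$ coming from the Lemma~\ref{longexactclosedpair} sequence of $(A,B)$ with the connecting map $\ci{k}{A}{R}\to\ci{k+1}{X,A}{R}$ coming from the Lemma~\ref{longexactclosedpair} sequence of $(X,A)$. Exactness at each of the three types of spots, namely $\ci{k}{X,B}{R}$, $\ci{k}{A,B}{R}$, and $\ci{k+1}{X,A}{R}$, then follows from the purely algebraic diagram chase in the braid of Eilenberg and Steenrod~\cite[p.~24]{eilenbergsteenrod}, using only the exactness of the three ingredient sequences and the commutativity of the braid; the direct-limit origin of these groups plays no further role. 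I would also note that this is an exact sequence of $R$-modules only: the cup products of $\ci{*}{-}{R}$ do not enter.

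I do not expect a genuine obstacle; the only thing to check is that all squares in the braid commute, which is the naturality of the Lemma~\ref{longexactclosedpair} sequences under inclusion-induced proper maps (the connecting maps are natural because they are the direct limits of the ordinary, natural connecting homomorphisms). Should one prefer to avoid invoking this naturality at the level of end-cohomology, there is a direct alternative mirroring the proof of Lemma~\ref{longexactclosedpair}: fix a compact exhaustion $\cpa{K_j}$ of $X$, observe that $\cpa{A\cap K_j}$ and $\cpa{B\cap K_j}$ are compact exhaustions of $A$ and $B$ since $A$ and $B$ are closed in $X$, write down the ordinary long exact sequence of the triple $(X-K_j,\,A-K_j,\,B-K_j)$ for each $j$, assemble these into a direct system of exact sequences with restriction morphisms, and pass to the limit using that the direct limit is an exact functor on $R$-modules together with the identification~\eqref{eq:direct_limit}. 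I would present the braid argument for brevity and leave this second route as a remark.
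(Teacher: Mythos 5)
Your main argument is exactly the paper's proof: apply Lemma~\ref{longexactclosedpair} to the closed pairs $(A,B)$, $(X,B)$, and $(X,A)$ and then run the standard Eilenberg--Steenrod diagram chase (with the connecting map defined as the composite you describe), so the proposal is correct and essentially identical to the paper's argument. The direct-limit alternative you sketch at the end is also valid but unnecessary.
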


\begin{remark}
It is crucial for end-cohomology that one consider \emph{closed} pairs and triples.
Otherwise, one does not obtain induced maps for the usual long exact sequences,
and the direct system $H^*(A\cap K_j;R)$ (where $\cpa{K_j}$ is a compact exhaustion of $X$)
need not compute $H^k_e(A;R)$.
\end{remark}

An \emph{excisive triad} $(X;A,B)$ is a space $X$ together with two closed subspaces
$A\subseteq X$ and $B\subseteq X$ such that $X=A^\circ\cup B^\circ$
where $A^\circ$ and $B^\circ$ are the topological interiors of $A$ and $B$ in $X$ respectively\footnote{Note the subtle notational distinction between a triple and a triad.}.

\begin{lemma}\label{excision}
Let $(X;A,B)$ be an excisive triad and set $C=A\cap B$.
Then, the inclusion $\phi:(A,C) \to (X,B)$ induces the \emph{excision isomorphism}
\[
\phi^* : \ci{\ast}{X,B}{R} \to \ci{\ast}{A,C}{R}
\]
\end{lemma}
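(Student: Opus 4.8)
The strategy is to reduce the end-cohomology excision statement to ordinary excision applied at each finite stage of a well-chosen pair of compact exhaustions, then pass to direct limits. The key point is to arrange the exhaustions so that at each stage the relevant triad remains excisive and ordinary excision applies verbatim. First I would fix a compact exhaustion $\cpa{K_j}$ of $X$. I then want to replace it by the exhaustions $\cpa{K_j\cap A}$ of $A$, $\cpa{K_j\cap B}$ of $B$, and $\cpa{K_j\cap C}$ of $C$; since $A$, $B$, and $C=A\cap B$ are closed in $X$, these are genuine compact exhaustions of the respective spaces, and by the cofinality remarks following~\eqref{eq:direct_limit} they compute the end-cohomology algebras $\ci{\ast}{X,B}{R}$ and $\ci{\ast}{A,C}{R}$ (using the exhaustion $\cpa{K_j}$ for $X$ and $\cpa{K_j\cap A}$ for $A$, with the subspaces $B$, resp. $C$, cut down accordingly).

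The heart of the argument is that, for each $j$, the triad
\[
\pa{X-K_j;\, A-K_j,\, B-K_j}
\]
is again excisive. Indeed, removing the closed set $K_j$ from $X$ and from each of $A$, $B$ does not disturb the interiors: one checks that $(A-K_j)^\circ \cup (B-K_j)^\circ \supseteq (A^\circ \cup B^\circ) - K_j = X - K_j$, using that $K_j$ is closed so that $U-K_j$ is open whenever $U$ is. Moreover $(A-K_j)\cap(B-K_j) = C - K_j$. Hence ordinary excision (in the form that for an excisive triad $(Y;P,Q)$ the inclusion $(P,P\cap Q)\hookrightarrow(Y,Q)$ induces an isomorphism on singular cohomology) gives, for each $j$, an isomorphism
\[
\phi_j^* : H^*(X-K_j,\, B-K_j;\, R) \;\xrightarrow{\ \cong\ }\; H^*(A-K_j,\, C-K_j;\, R).
\]
These isomorphisms are natural with respect to the restriction maps induced by $K_j \subseteq K_{j+1}$, so they assemble into an isomorphism of the two direct systems, and therefore induce an isomorphism on direct limits. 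By naturality, this direct-limit isomorphism is exactly $\phi^*_e$ induced by the inclusion of closed pairs $\phi:(A,C)\to(X,B)$, as described in the passage on induced morphisms preceding Lemma~\ref{properhomotopic}. Since $\phi^*_e$ is a direct limit of graded $R$-algebra maps that are isomorphisms, it is itself a graded $R$-algebra isomorphism.

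The main obstacle, and the step deserving the most care, is verifying that $\pa{X-K_j;\, A-K_j,\, B-K_j}$ is excisive — i.e. that the interiors computed in the subspace $X-K_j$ still cover. This is where one must use both that $K_j$ is closed in $X$ and that $A,B$ are closed in $X$; in particular one should be careful that the interior of $A-K_j$ as a subspace of $X-K_j$ contains $A^\circ - K_j$ (which is clear, since $A^\circ - K_j$ is open in $X-K_j$ and contained in $A-K_j$), and symmetrically for $B$, so that the union of these interiors already contains $(A^\circ\cup B^\circ)-K_j = X-K_j$. A secondary, routine point is confirming that the excision isomorphisms $\phi_j^*$ commute with the direct-system structure maps, which follows from the naturality of ordinary excision; and that the resulting map on direct limits is the one induced by $\phi$, which follows from the construction of induced morphisms via the commutative diagram~\eqref{commdiagram2}. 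One should also note that the excision isomorphism on ordinary singular cohomology is a ring isomorphism (it is induced by an inclusion of spaces and respects cup products), so the direct-limit map respects the algebra structure.
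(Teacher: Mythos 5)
Your proposal is correct and follows essentially the same route as the paper's proof: fix a compact exhaustion $\cpa{K_j}$ of $X$, observe that each triad $\pa{X-K_j;\,A-K_j,\,B-K_j}$ is excisive so that ordinary excision gives stagewise isomorphisms, and pass to direct limits via naturality. Your extra verification that the subspace interiors of $A-K_j$ and $B-K_j$ still cover $X-K_j$ is exactly the "Observe that" step the paper leaves brief, so there is nothing to add.
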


\begin{proof}
Let $\cpa{K_j}$ be a compact exhaustion of $X$.
So, $\cpa{A\cap K_j}$, $\cpa{B\cap K_j}$, and $\cpa{C\cap K_j}$ are compact exhaustions
of $A$, $B$, and $C$ respectively.
We have the two direct systems
\begin{equation}
\begin{aligned}\label{twodirectsystems}
	& H^*\pa{A-K_j,C-K_j;R} \\
	& H^*\pa{X-K_j,B-K_j;R}
\end{aligned}
\end{equation}
where the morphisms in both systems are induced by inclusions.
For each $j$, we have the inclusion
\[
\phi_j :\pa{A-K_j,C-K_j} \to \pa{X-K_j,B-K_j}
\]
Observe that $X-K_j=(A-K_j)^\circ \cup (B-K_j)^\circ$ where $(A-K_j)^\circ$ denotes the
topological interior of $A-K_j$ as a subspace of $X-K_j$ and similarly for $(B-K_j)^\circ$.
Therefore, each $\phi_j^*$ is an excision isomorphism on ordinary $R$-cohomology.
By~\cite[p.~223]{eilenbergsteenrod}, these isomorphisms induce an isomorphism
between the direct limits of the direct systems~\eqref{twodirectsystems}.
Two applications of~\eqref{eq:direct_limit} now complete the proof.
\end{proof}

The following corollary is useful (compare~\cite[p.~32]{eilenbergsteenrod} and May~\cite[pp.~145]{may}).

\begin{corollary}\label{sum_pairs}
Let $(X;A,B)$ be an excisive triad and set $C=A\cap B$.
Denote the inclusion maps by $i_A:(A,C)\hookrightarrow(X,C)$ and $i_B:(B,C)\hookrightarrow(X,C)$.
Then, the map
\[
h: \ci{\ast}{X,C}{R} \to \ci{\ast}{A,C}{R} \oplus \ci{\ast}{B,C}{R}
\]
defined by $h(\alpha)=(i_A^*(\alpha),i_B^*(\alpha))$ is a graded $R$-algebra isomorphism.
\end{corollary}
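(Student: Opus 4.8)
The plan is to exhibit $h$ as the isomorphism attached to a short exact sequence assembled from the long exact sequence of a closed triple together with the excision isomorphism of Lemma~\ref{excision}; this is the end-cohomology version of the classical relative Mayer--Vietoris statement, and the classical argument should transfer essentially verbatim. Write $j\colon(X,C)\hookrightarrow(X,B)$ for the evident inclusion of closed pairs, and observe that the inclusion $\phi\colon(A,C)\to(X,B)$ of Lemma~\ref{excision} factors as $\phi=j\circ i_A$, so that $\phi^*=i_A^*\circ j^*$. Since $C=A\cap B$ is closed in $X$, Corollary~\ref{letriple} applies to the closed triple $(X,B,C)$ and yields the long exact sequence
\[
\cdots \to \ci{k}{X,B}{R} \xrightarrow{\,j^*\,} \ci{k}{X,C}{R} \xrightarrow{\,i_B^*\,} \ci{k}{B,C}{R} \xrightarrow{\,\delta\,} \ci{k+1}{X,B}{R} \to \cdots,
\]
whose first two maps, after unwinding the construction in Corollary~\ref{letriple}, are exactly $j^*$ and $i_B^*$.

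The first step is to extract a short exact sequence. By Lemma~\ref{excision}, $\phi^*$ is an isomorphism; since $\phi^*=i_A^*\circ j^*$, this forces $j^*$ to be injective. Exactness then gives $\im{\delta}=\ker j^*=0$, so $i_B^*$ is surjective with $\ker i_B^*=\im{j^*}$; that is, for each $k$ there is a short exact sequence $0\to\ci{k}{X,B}{R}\xrightarrow{\,j^*\,}\ci{k}{X,C}{R}\xrightarrow{\,i_B^*\,}\ci{k}{B,C}{R}\to 0$.

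Next I would verify that $h=(i_A^*,i_B^*)$ is bijective by a short diagram chase. For injectivity: if $h(\alpha)=0$, then $\alpha=j^*(\beta)$ for some $\beta$ by exactness, and $0=i_A^*(\alpha)=\phi^*(\beta)$ forces $\beta=0$, hence $\alpha=0$. For surjectivity: given $(\eta,\zeta)$, choose $\alpha_0$ with $i_B^*(\alpha_0)=\zeta$ (possible since $i_B^*$ is onto), then $\beta$ with $\phi^*(\beta)=\eta-i_A^*(\alpha_0)$ (possible since $\phi^*$ is onto), and set $\alpha=\alpha_0+j^*(\beta)$; since $j^*(\beta)\in\ker i_B^*$ we get $i_B^*(\alpha)=\zeta$, while $i_A^*(\alpha)=i_A^*(\alpha_0)+\phi^*(\beta)=\eta$. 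Thus $h$ is an isomorphism of graded $R$-modules. To upgrade this to an algebra isomorphism, I would note that $i_A^*$ and $i_B^*$ are induced by maps of closed pairs, hence at each stage of the defining direct systems are ring homomorphisms for the relative cup products, compatibly with the restriction maps; passing to the direct limit, they are graded $R$-algebra homomorphisms, and since multiplication on $\ci{\ast}{A,C}{R}\oplus\ci{\ast}{B,C}{R}$ is componentwise, $h(\alpha\cup\alpha')=h(\alpha)\cdot h(\alpha')$.

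I do not expect a genuine obstacle once Lemma~\ref{excision} and Corollary~\ref{letriple} are in hand. The only points that warrant care are the bookkeeping around the factorization $\phi=j\circ i_A$ and the identification of the first two maps of the triple sequence with $j^*$ and $i_B^*$ rather than with their variants, together with confirming that the relative cup product is compatible with the direct limits defining end-cohomology, so that the module isomorphism $h$ is automatically multiplicative.
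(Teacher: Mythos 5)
Your proof is correct and follows essentially the paper's approach: excision (Lemma~\ref{excision}) plus the long exact sequence of a closed triple (Corollary~\ref{letriple}), finished by a short diagram chase and the observation that $i_A^*$ and $i_B^*$ are algebra maps. The only difference is organizational—the paper runs the symmetric diagram using both triples $(X,A,C)$ and $(X,B,C)$ and both excision isomorphisms, while you use only the triple $(X,B,C)$ together with the factorization $\phi=j\circ i_A$ to get a short exact sequence, a slightly leaner but equivalent chase.
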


Recall that the product is coordinatewise in the direct sum of algebras.

\begin{proof}
The commutative diagram of inclusions
\begin{equation}\label{eq:spaces}\begin{split}
\xymatrix{
(A,C) \ar[dd] \ar[dr]^{i_A} & & (B,C) \ar[dd] \ar[dl]_{i_B}\\
 & (X,C) \ar[dl] \ar[dr] & \\
(X,B) & & (X,A)
}
\end{split}
\end{equation}
induces the commutative diagram
\begin{equation}\label{eq:cohom_inf}\begin{split}
\xymatrix{
\ci{\ast}{A,C}{R} & & \ci{\ast}{B,C}{R}\\
 & \ci{\ast}{X,C}{R} \ar@{->>}[ul]_{i_A^*} \ar@{->>}[ur]^{i_B^*} & \\
\ci{\ast}{X,B}{R} \ar[uu]^{\cong}_{\tn{exc.}} \ar@{^{(}->}[ur] & & \ci{\ast}{X,A}{R} \ar[uu]_{\tn{exc.}}^{\cong} \ar@{_{(}->}[ul]
}
\end{split}
\end{equation}
The vertical maps are excision isomorphisms (Lemma~\ref{excision}).
Hence, the two lower maps are injective and the two upper maps are surjective.
The two diagonals are exact
being portions of long exact sequences for triples (Corollary~\ref{letriple}).
These properties of~\eqref{eq:cohom_inf} readily imply that $h$ is both injective and surjective.
\end{proof}

\begin{remark}
Let $r \subset \Int{M}$ be a ray and $\nu r\subset \Int{M}$ be a smooth closed regular neighborhood of $r$.
Define $\widehat{M}:=M-\Int{\nu r}$.
We claim that the inclusion \hbox{$\phi:(\widehat{M},\partial \nu r)\hookrightarrow(M,\nu r)$}
induces an isomorphism $\phi_e^*$ on end-cohomology.
However, the corresponding triad $(M;\widehat{M},\nu r)$ is not excisive since
$M$ is not the union of the topological interiors $\widehat{M}^\circ$ and $\nu r ^\circ$
of $\widehat{M}$ and $\nu r$ in $M$ respectively.
This nuisance is easily fixed using a closed collar.
Let $Z\approx \partial\nu r \times [0,1]$ be a closed collar on $\partial \nu r$ in $\nu r$.
Notice that $\phi$ equals the composition of the inclusions
\[
(\widehat{M},\partial \nu r)\overset{i}{\hookrightarrow}(\widehat{M}\cup Z,Z)\overset{j}{\hookrightarrow}(M,\nu r)
\]
Both induced morphisms $i_e^*$ and $j_e^*$ are isomorphisms.
The former holds since $i$ is properly homotopic to the identity map
on $(\widehat{M},\partial \nu r)$ using the obvious proper strong deformation retraction that
collapses the closed collar $Z$ to $\partial \nu r$.
The latter holds since $j_e^*$ is the excision isomorphism from the excisive triad $(M; \widehat{M}\cup Z, \nu r)$.
Hence, $\phi_e^*$ is an isomorphism and the claim is proved.
Excision is used in Section~\ref{sec:cai_csi} below
and Corollary~\ref{sum_pairs} is used in the proof of Theorem~\ref{thm:King}.
In each of these places, we leave the standard collaring fix to the reader.
\end{remark}

For a general noncompact space or manifold, it appears to be difficult
to compute the end-cohomology algebra in a comprehensible manner.
So, we deliberately construct manifolds (stringers, surgered stringers, and ladders)
with tractable algebras that fit into the following framework.\\

Let $M$ be a connected space with a compact exhaustion $\cpa{K_j}$ where $j\in\Z_{\geq 0}$.
Assume $K_0=\emptyset$.
\begin{figure}[htbp!]
    \centerline{\includegraphics[scale=1.0]{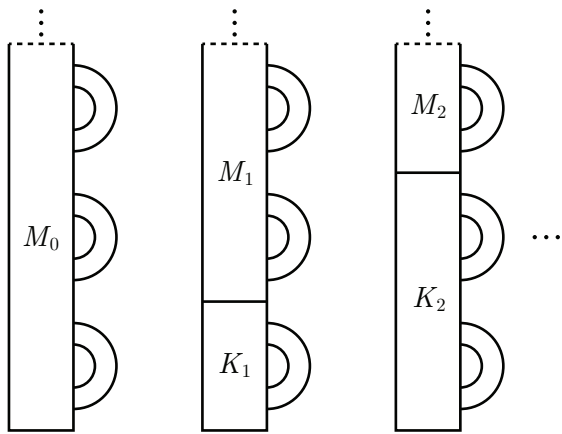}}
    \caption{Manifold $M$ with a compact exhaustion $\cpa{K_j}$ and a (closed) neighborhood system of infinity $\cpa{M_j}$.}
\label{fig:exhaustion}
\end{figure}
Define $M_j :=M-K_j^\circ$ where $K_j^\circ$ is the topological interior of $K_j$ as a subspace of $M$.
So, each $M_j$ is closed in $M$ and
\[
M=M_0 \supseteq M_1 \supseteq M_2 \supseteq \cdots
\]
is a (closed) neighborhood system of infinity as in Figure~\ref{fig:exhaustion}.
By~\eqref{eq:direct_limit_int}, we have $\ci{\ast}{M}{R} \cong \ilim H^*(M_j;R)$.\\

For each $j$, let $i_j:M_{j+1}\hookrightarrow M_j$ be the inclusion.
Suppose that for each $j\in\Z_{\geq 0}$ there is a retraction $r_j:M_j\to M_{j+1}$
(in Figure~\ref{fig:exhaustion}, the retraction $r_j$ folds up the bottom of $M_j$).
The composition $r_j\circ i_j$ equals the identity on $M_{j+1}$.
So, $i_j^* \circ r_j^*$ equals the identity on $H^*(M_{j+1};R)$ and each $i_j^*$ is surjective.
By~\cite[p.~222]{eilenbergsteenrod}, each of the canonical morphisms
\[
q_i : H^*(M_i ; R) \to \ci{*}{M}{R}
\]
is surjective with kernel $Q_i$ equal to the submodule of elements that are eventually sent to $0$
in the direct system $H^*(M_j;R)$. Here, $q_i(\alpha):= \ec{\alpha}$.
Hence, for each $i\in\Z_{\geq 0}$ we have $H^*(M_i,R)/Q_i\cong \ci{*}{M}{R}$.
This discussion applies to relative and reduced end-cohomology as well.

\section{Stringers, Surgered Stringers, and Ladders}
\label{sec:stringersladders}

In this section, we define some manifolds and present their end-cohomology algebras.
These will be used in our proof of the Main Theorem.\\

Let $X$ be a closed, connected, oriented $n$-manifold with $n\geq 2$. The \emph{stringer} based on $X$ is $[0,\infty)\times X$ with the product orientation~\cite[Ch.~3]{gp}.
Let $X_t=\cpa{t}\times X$, so the oriented boundary of the stringer is $-X_0$.
The end-cohomology algebra of the stringer is
\[
	\rci{\ast}{[0,\infty)\times X}{R} \cong \rc{\ast}{X}{R}
\]

\vspace{10pt}

The \emph{surgered stringer} $\sst{X}$ based on $X$ is obtained from the stringer on $X$ by performing countably many oriented $0$-surgeries as in Figure~\ref{fig:surgered_stringer}. 
\begin{figure}[htbp!]
    \centerline{\includegraphics[scale=1.0]{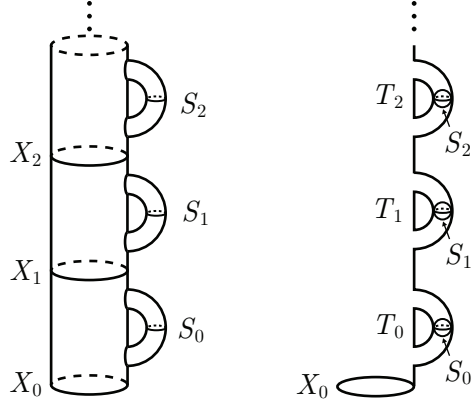}}
    \caption{Surgered stringer $\sst{X}$ and a strong deformation retract $X_0 \vee J$ of $\sst{X}$.}
\label{fig:surgered_stringer}
\end{figure}
We refer to the glued-in copies of $D^1\times S^n$ as \emph{rungs}.
The space $X_0 \vee J$ in Figure~\ref{fig:surgered_stringer} is the wedge of
$X_0$ and $J$, where $J$ is the wedge of a ray, $n$-spheres $S_j$, and $1$-spheres $T_j$.
It is a strong deformation retract of $\sst{X}$ by an argument similar to the one provided in~\cite[Lemma~3.2]{calcuthaggerty}.\\

The surgered stringer $\sst{X}$ is oriented using the orientation of the stringer $[0,\infty)\times X$.
Let $\mathbb{S}_{\br{j,k}}$ denote the points of $\sst{X}$ with heights in the interval $\br{j,k}$ as in Figure~\ref{fig:sscob}.
We orient $\mathbb{S}_{\br{j,k}}$ as a codimension-0 submanifold of $\sst{X}$.
\begin{figure}[htbp!]
    \centerline{\includegraphics[scale=1.0]{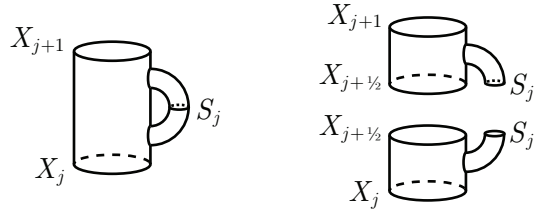}}
    \caption{Cobordisms $\mathbb{S}_{\br{j,j+1}}$, $\mathbb{S}_{\br{j,j+1/2}}$, and $\mathbb{S}_{\br{j+1/2,j+1}}$ in $\sst{X}$.}
\label{fig:sscob}
\end{figure}
We orient each $n$-sphere $S_j$ so that the oriented boundary of the cobordism $\mathbb{S}_{\br{j,j+1/2}}$ is $X_{j+1/2}-X_j+S_j$.
Thus, the oriented boundary of $\mathbb{S}_{\br{j+1/2,j+1}}$ is $X_{j+1}-X_{j+1/2}-S_j$.\\

Let $s^j$ denote the fundamental class $\br{S_j}$ of $S_j$, and let $t^j$ denote the fundamental class $\br{T_j}$ of $T_j$.
So, the nonzero reduced integer homology groups of $J$ are $\wt{H}_n(J) \cong \Z[s]$ and $\wt{H}_1(J) \cong \Z[t]$.
Define $\sigma^j$ and $\tau^j$ to be the dual fundamental classes $\br{S_j}^*$ and $\br{T_j}^*$ so that the nonzero reduced cohomology groups of $J$ are 
\begin{align*}
	\rc{n}{J}{R}	&\cong	\hom{\Z}{\Z[s]}{R} \cong R[[\sigma]]\\
	\rc{1}{J}{R}	&\cong	\hom{\Z}{\Z[t]}{R} \cong R[[\tau]]
\end{align*}
All cup products in $\rc{\ast}{J}{R}$ vanish.\\

An argument similar, but simpler, to the one provided in~\cite[$\S$~3]{calcuthaggerty} now shows that the end-cohomology algebra of $\sst{X}$ is
\[
	\rci{k}{\sst{X}}{R} \cong
	\begin{cases}
		\cohom{n}{X}{R} \oplus R[[\sigma]]/R[\sigma] &\tn{if $k = n$,}\\
		\cohom{k}{X}{R} \oplus 0  &\tn{if $2\leq k \leq n-1$,}\\
		\cohom{1}{X}{R} \oplus R[[\tau]]/R[\tau] &\tn{if $k = 1$,}\\
		0 &\tn{otherwise}
	\end{cases}
\]
The cup product is coordinatewise in the direct sum; it is that of $X$ in the first coordinate and vanishes in the second coordinate.\\

Let $X$ and $Y$ be closed, connected, oriented $n$-manifolds with $n\geq 2$.
The \emph{ladder manifold} $\la{X}{Y}$ based on $X$ and $Y$ is obtained from the stringers based on $X$ and on $Y$ by performing countably many oriented $0$-surgeries as in Figure~\ref{fig:ladder} (Ladder manifolds were the primary objects of study in~\cite{calcuthaggerty}. See that paper for more details).
\begin{figure}[htbp!]
	\centerline{\includegraphics[scale=1]{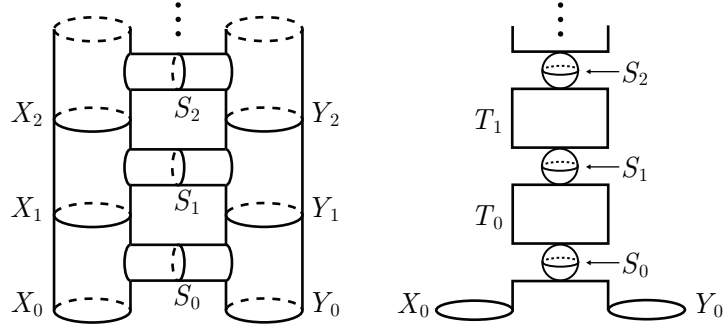}}
	\caption{Ladder manifold $\la{X}{Y}$ and a strong deformation retract $X_0 \vee J \vee Y_0$.}
	\label{fig:ladder}
\end{figure}
Again, the glued-in copies of $D^1\times S^n$ are called \emph{rungs}.
The ladder manifold is oriented using the orientations of the stringers based on $X$ and $Y$.
The oriented boundary of $\la{X}{Y}$ is $-X_0-Y_0$.
Let $\mathbb{L}_{\br{j,k}}$ denote the points of $\la{X}{Y}$ with heights in the interval $\br{j,k}$.
We orient $\mathbb{L}_{\br{j,k}}$ as a codimension-0 submanifold of $\la{X}{Y}$.
The cobordism $\mathbb{L}_{\br{j,j+1}}$ is the union of two connected cobordisms with shared boundary component $S_j$. We orient each $S_j$ so that the oriented boundaries of these cobordisms are $X_{j+1}-X_j+S_j$ and $Y_{j+1}-Y_{j}-S_j$.
The ladder manifold $\la{X}{Y}$ also contains $1$-spheres $T_j$ as shown in Figure~\ref{fig:ladder},
and it strong deformation retracts to the wedge $X_0\vee J \vee Y_0$ as explained in~\cite[p.~3287]{calcuthaggerty}.\\

Let $s^j$ denote the fundamental class $\br{S_j}$ of $S_j$, and let $t^j$ denote the fundamental class $\br{T_j}$ of $T_j$.
Again, the nonzero reduced integer homology groups of $J$ are $\wt{H}_n(J) \cong \Z[s]$ and $\wt{H}_1(J) \cong \Z[t]$.
Define $\sigma^j$ and $\tau^j$ to be the dual fundamental classes $\br{S_j}^*$ and $\br{T_j}^*$ so that the nonzero reduced cohomology groups of $J$ are 
\begin{align*}
	\rc{n}{J}{R} &\cong \hom{\Z}{\Z[s]}{R} \cong R[[\sigma]]\\
	\rc{1}{J}{R}	&\cong \hom{\Z}{\Z[t]}{R} \cong R[[\tau]]
\end{align*}
All cup products in $\rc{\ast}{J}{R}$ vanish.
By \cite[$\S$~3]{calcuthaggerty}, the end-cohomology algebra of the ladder manifold $\la{X}{Y}$ is
\[
	\rci{k}{\la{X}{Y}}{R} \cong
	\begin{cases}
		(\cohom{n}{X}{R} \oplus R[[\sigma]] \oplus \cohom{n}{Y}{R}) / K &\tn{if $k = n$,}\\
		\cohom{k}{X}{R} \oplus 0 \oplus \cohom{k}{Y}{R} &\tn{if $2\leq k \leq n-1$,}\\
		\cohom{1}{X}{R} \oplus R[[\tau]]/R[\tau] \oplus \cohom{1}{Y}{R} &\tn{if $k = 1$,}\\
		0 &\tn{otherwise}
	\end{cases}
\]
where $K := \cpa{\left. \pa{\sum \beta_i, \beta, -\sum \beta_i} \right| \beta = \sum \beta_i\sigma^i \in R[\sigma]} \cong R[\sigma]$.
The cup product is coordinatewise in the direct sum; it is that of $X$ in the first coordinate, that of $Y$ in the third coordinate, and vanishes in the second coordinate.\\

\begin{remark}\label{topdclm}
As $X$ and $Y$ are closed, connected, and oriented $n$-manifolds, we have that
\[
\rci{n}{\la{X}{Y}}{R}\cong(R\oplus R[[\sigma]] \oplus R)/K
\]
When $R=\Z$, we show in Appendix~\ref{appclm} below that the dual module of this $\Z$-module is isomorphic to $\Z$.
On the other hand, for any ring $R$ the canonical $R$-module homomorphism
\[
R\oplus R \to (R\oplus R[[\sigma]] \oplus R)/K
\]
defined by $(r,s)\mapsto \ec{(r,0,s)}$ is injective and, hence, an $R$-module isomorphism onto its image $(R\oplus 0 \oplus R)/K$.
When $R$ is a field, $(R\oplus 0 \oplus R)/K$ is a two dimensional $R$-vector space.
When $R=\Z$, $(R\oplus 0 \oplus R)/K$ is a rank two free $\Z$-module.
For any ring $R$, each cup product with value of degree $n$ must lie in $(R\oplus 0 \oplus R)/K$.
\end{remark}

For many base manifolds, surgered stringers and ladder manifolds have nonisomorphic end-cohomology algebras.
The proof of Theorem~\ref{classsurf} below shows various techniques for distinguishing these algebras.
However, in some exceptional cases these manifolds have diffeomorphic ends.

\begin{proposition}\label{ss_lm}
Let $X$ be a closed, connected, oriented $n$-manifold where $n\geq 1$.
Let $M=\la{X}{S^n}\cup_{\partial}D^{n+1}$ be the ladder manifold with the $S^{n}$ boundary component capped by an $(n+1)$-disk.
Then, $M$ is diffeomorphic to $\sst{X}$.
In particular, $\la{X}{S^n}$ and $\sst{X}$ have diffeomorphic ends and, hence, isomorphic end-cohomology algebras.
\end{proposition}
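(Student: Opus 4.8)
The plan is to build the diffeomorphism directly from the handle/surgery descriptions, using the fact that capping an $S^n$ boundary component with a disk "undoes" the effect of having a second stringer based on $S^n$. First I would recall that the ladder manifold $\la{X}{S^n}$ is obtained from the disjoint union of the stringer $[0,\infty)\times X$ and the stringer $[0,\infty)\times S^n$ by attaching, for each $j$, a rung $D^1\times S^n$ connecting the slice $X_{[j,j+1]}$-region to the $S^n$-stringer region; equivalently, the rung realizes an oriented $0$-surgery joining a small $n$-sphere in the $X$-side to a small $n$-sphere in the $S^n$-side. The key observation is that the entire $S^n$-stringer $[0,\infty)\times S^n$, once we cap its far boundary $\{0\}\times S^n$ with a disk $D^{n+1}$, becomes $[0,\infty)\times S^n \cup_\partial D^{n+1}$, which is diffeomorphic to a half-open collar on $S^n$ glued to a disk — that is, it is a closed regular neighborhood of a properly embedded ray in an $(n+1)$-manifold, hence diffeomorphic to $\R^{n+1}_+$ (closed upper half-space), with the half-space boundary corresponding to where the rungs attach. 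So after capping, the "$S^n$-side plus all its rungs" is just a half-space $\R^{n+1}_+$ along which countably many $n$-spheres (the rung-ends) have been summed into the $X$-stringer.

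Second, I would compare this with the construction of $\sst{X}$. By definition, $\sst{X}$ is $[0,\infty)\times X$ with countably many oriented $0$-surgeries performed; each $0$-surgery removes two $(n+1)$-balls and glues in a rung $D^1\times S^n$. Performing an oriented $0$-surgery on a connected manifold, then — if one wishes — capping, is exactly the move "connect two regions by a $1$-handle". The claim is that doing the $j$th $0$-surgery inside a single stringer is the same as taking the $j$th rung of $\la{X}{S^n}$ and then filling the $S^n$-stringer with the capped half-space: in both cases one removes a ball from $[0,\infty)\times X$ near height $j$ and reglues along $S^n$, and the complementary piece ($D^1\times S^n$ in the surgery picture; the half-space $\R^{n+1}_+$ with its rungs absorbed in the ladder picture) is a ball $D^{n+1}$ with a collar, hence contributes nothing new. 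Concretely, I would set up an explicit diffeomorphism by working rung-by-rung: use uniqueness of closed regular neighborhoods of rays (quoted in $\S$\ref{ssec:conventions} from \cite{cks}) and of tubular neighborhoods to identify the capped $S^n$-stringer-plus-its-$j$th-rung with the standard model for a single $0$-surgery handle, then assemble these over all $j$ using a compact-exhaustion/colimit argument so that the local diffeomorphisms patch to a global one.

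The main obstacle, I expect, will be the bookkeeping of \emph{orientations} and the precise matching of the countably many attaching regions: one must check that the oriented $0$-surgeries defining $\sst{X}$ (with the sign conventions fixed in $\S$\ref{sec:stringersladders}, where $\partial\mathbb{S}_{[j,j+1/2]}=X_{j+1/2}-X_j+S_j$) correspond under the diffeomorphism to the oriented rungs of $\la{X}{S^n}$ (with $\partial$ of the $X$-side cobordism equal to $X_{j+1}-X_j+S_j$), and that the capping disk $D^{n+1}$ carries the orientation making the whole thing an oriented manifold diffeomorphism and not merely an unoriented one. Once the single-rung model is pinned down and oriented correctly, extending over all rungs is routine since the rungs are disjoint and occur in a locally finite pattern, so the patching causes no trouble. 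Finally, the last sentence of the statement is immediate: a diffeomorphism $M\approx\sst{X}$ restricts to a diffeomorphism of neighborhoods of infinity (removing a compact set from each), and end-cohomology is a proper homotopy invariant by Corollary~\ref{phiec}, so the end-cohomology algebras of $\la{X}{S^n}$ and $\sst{X}$ are isomorphic.
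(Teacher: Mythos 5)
Your overall strategy (absorb the capped $S^n$-side into the $X$-stringer) is in the right spirit, but the key step as you describe it does not work, for two concrete reasons. First, the capped stringer $[0,\infty)\times S^n\cup_{\partial}D^{n+1}$ is a boundaryless manifold diffeomorphic to $\R^{n+1}$, not to closed upper half-space $\R^{n+1}_+$, and inside $M$ it is not a regular neighborhood of a ray meeting the rest of the manifold along a boundary hyperplane: the rungs attach to it along countably many pairwise disjoint $(n+1)$-balls, so there is no ``half-space boundary where the rungs attach.'' Second, and more seriously, your rung-by-rung identification is miscounted. Each $0$-surgery defining $\sst{X}$ removes \emph{two} balls from $[0,\infty)\times X$ (both feet of the rung lie on the $X$-stringer), whereas each rung of $\la{X}{S^n}$ removes only \emph{one} ball from the $X$-stringer, its other foot lying on the capped $S^n$-side; moreover that capped side is a single connected piece meeting \emph{all} the rungs, so it cannot be paired with ``its $j$th rung'' separately for each $j$. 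If, as you assert, the capped side together with its rungs were ``absorbed, contributing nothing new,'' the conclusion would be $M\approx[0,\infty)\times X$, which is false (the stringer and $\sst{X}$ have nonisomorphic end-cohomology already in degree one). The actual content of the proposition is exactly the redistribution you are skipping: the capped $S^n$-side must be cut into a ball plus cylinders and these pieces regrouped with consecutive rungs so that each resulting tube has both feet on the $X$-stringer.

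The paper sidesteps this bookkeeping with a short trick: let $N$ be the classical connected sum of the stringer $[0,\infty)\times X$ with countably many $(n+1)$-spheres placed along the end, so that $N\approx[0,\infty)\times X$; then one and the same family of oriented $0$-surgeries, performed on $N$, visibly yields $M$ (the chain of spheres assembling into the capped $S^n$-stringer), while performed on $[0,\infty)\times X$ it yields $\sst{X}$. Your orientation concerns are legitimate but secondary; the missing ingredient is this intermediate manifold (or an equivalent regrouping argument), without which the proposed local identification fails. The final sentence of your proposal (a diffeomorphism restricts to neighborhoods of infinity, and end-cohomology is a proper homotopy invariant) is fine.
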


\begin{proof}
Let $N$ be the (classical) connected sum of the stringer $[0,\infty)\times X$ and countably many $(n+1)$-spheres as in Figure~\ref{fig:ladder_mod}.
\begin{figure}[htbp!]
	\centerline{\includegraphics[scale=1]{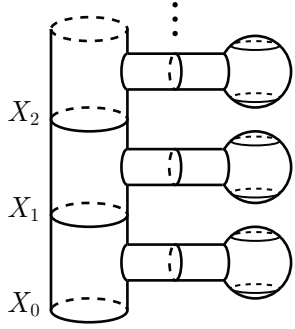}}
	\caption{Classical connected sum $N$ of the stringer based on $X$ with countably many $(n+1)$-spheres.}
	\label{fig:ladder_mod}
\end{figure}
Note that $N\approx [0,\infty)\times X$.
Performing countably many oriented $0$-surgeries on $N$ yields $M$, and performing them on $[0,\infty)\times X$ yields $\sst{X}$.
\end{proof}

\begin{remark}
Given a manifold $Y$ (not necessarily connected) and two proper disjoint rays in $Y$, \emph{ladder surgery} is the operation where one performs countably many oriented $0$-surgeries on $Y$ using the $0$-spheres given by the corresponding integer points on the rays. Properly homotopic rays yield diffeomorphic manifolds (see Definition~3.1 and Corollary~4.13 in~\cite{calcutgompf}).
\end{remark}

\begin{corollary}\label{oldconj}
For any ring $R$, there is an $R$-module isomorphism
\begin{align*}
	\textstyle f: R\oplus R[[x]]/R[x] &\longrightarrow (R\oplus R[[x]] \oplus R)/K \\
	\textstyle \pa{r,\ecl{\sum_{i=0}^{\infty} c_i x^i}} &\mapsto \textstyle \ecl{\pa{r-c_0, \sum_{i=1}^{\infty} (c_i-c_{i-1})x^i,c_0}}
\end{align*}
\end{corollary}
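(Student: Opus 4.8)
The statement only claims that a specific $R$-linear map is an isomorphism, and Proposition~\ref{ss_lm} already shows the two modules are \emph{abstractly} isomorphic: with $x$ playing the role of $\sigma$, the domain and codomain of $f$ are precisely $\rci{n}{\sst{X}}{R}$ and $\rci{n}{\la{X}{S^{n}}}{R}$ for a closed, connected, oriented $n$-manifold $X$ (using $\cohom{n}{X}{R}\cong\cohom{n}{S^{n}}{R}\cong R$), and these have diffeomorphic ends. So the only real task is to check the displayed $f$ directly. The plan is to rewrite $f$ cleanly, write down its inverse, and verify the two composites. Two elementary observations streamline everything. First, for a polynomial $\beta=\sum\beta_{i}\sigma^{i}$ the coefficient sum $\sum\beta_{i}$ equals the evaluation $\beta(1)$, so $K=\{(\beta(1),\beta(x),-\beta(1))\mid\beta\in R[x]\}$. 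Second, the operator $\sum c_{i}x^{i}\mapsto\sum_{i\geq1}(c_{i}-c_{i-1})x^{i}$ on $R[[x]]$ is $Q\mapsto(1-x)Q-Q(0)$, where $Q(0)$ denotes the constant term of $Q$, while forming partial sums is multiplication by the unit $1/(1-x)=1+x+x^{2}+\cdots$ of $R[[x]]$. In this notation $f(r,\ec{Q})=\ec{(r-Q(0),\,(1-x)Q-Q(0),\,Q(0))}$, from which $R$-linearity is immediate.

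First I would verify that $f$ is well defined on $R\oplus R[[x]]/R[x]$: replacing $Q$ by $Q+\pi$ with $\pi\in R[x]$ alters the value of $f$ by the class of $(-\pi(0),\,(1-x)\pi-\pi(0),\,\pi(0))$, which lies in $K$ via $\beta(x):=(1-x)\pi(x)-\pi(0)$, a polynomial with $\beta(1)=-\pi(0)$. Next I would write down the candidate inverse $g\colon(R\oplus R[[x]]\oplus R)/K\to R\oplus R[[x]]/R[x]$, $g(\ec{(a,P,b)})=\bigl(a+b,\,\ec{(b+P)/(1-x)}\bigr)$, and check it is well defined: if $(a,P,b)=(\beta(1),\beta(x),-\beta(1))\in K$, then $a+b=0$, and $(b+P)/(1-x)=(\beta(x)-\beta(1))/(1-x)$ is a polynomial because $x-1$ divides $\beta(x)-\beta(1)$ in $R[x]$. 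Linearity of $g$ holds since $P\mapsto P/(1-x)$ is multiplication by a unit.

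Finally I would compute the composites. For $g\circ f$ one gets first coordinate $(r-Q(0))+Q(0)=r$ and second coordinate $\ec{((1-x)Q)/(1-x)}=\ec{Q}$. For $f\circ g$, set $Q:=(b+P)/(1-x)$, so $(1-x)Q=b+P$ and $Q(0)=b+P(0)$; then $f(g(\ec{(a,P,b)}))=\ec{(a-P(0),\,P-P(0),\,b+P(0))}$, which differs from $(a,P,b)$ by $(-P(0),-P(0),P(0))=(\beta(1),\beta(x),-\beta(1))$ for the constant polynomial $\beta=-P(0)$, hence represents the same class. This shows $f$ and $g$ are mutually inverse $R$-module isomorphisms. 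There is no genuine obstacle; the only care needed is in the two well-definedness checks — that $f$ kills $R[x]$ in the domain and that $g$ kills $K$ — and both reduce to the single fact that $x-1$ divides $\beta(x)-\beta(1)$ in $R[x]$ for every polynomial $\beta$ (equivalently, that $1-x$ is a unit in $R[[x]]$).
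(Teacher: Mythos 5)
Your proof is correct and takes essentially the same route as the paper: the paper simply asserts that, with $f$ in hand, the purely algebraic verification is straightforward, and it records the inverse $\ecl{\pa{a,\textstyle\sum_i a_i x^i,b}}\mapsto\pa{a+b,\ecl{\textstyle\sum_i (b+\sum_{j\leq i}a_j)x^i}}$, which is exactly your $g$, since multiplication by $1/(1-x)$ is the partial-sum operator. You have merely carried out in detail the well-definedness and composite checks (organized nicely around the fact that $1-x$ is a unit in $R[[x]]$ and that $x-1$ divides $\beta(x)-\beta(1)$ in $R[x]$) that the paper leaves to the reader.
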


\begin{proof}
The topological proof of Proposition~\ref{ss_lm} determines the function $f$.
With $f$ in hand, it is straightforward to verify (purely algebraically) that $f$ is a well-defined $R$-module isomorphism.
The inverse function $f^{-1}$ is given by
\[
\textstyle \ecl{\pa{r,\sum_{i=0}^{\infty} a_i x^i,s}} \mapsto \pa{r+s,\ecl{\sum_{i=0}^\infty (s+\sum_{j=0}^i a_j)x^i}}
\]
In particular, $f$ maps  $(1,\ec{0}) \mapsto \ec{\pa{1,0,0}}$ and $\pa{1,\ecl{\frac{1}{1-x}}}\mapsto \ec{\pa{0,0,1}}$.
\end{proof}

Consider the $\Z$-module $G=\Z\oplus\Z[[x]]/\Z[x]$. The submodule $0\oplus \Z[[x]]/\Z[x]$ is determined \emph{algebraically} in an isomorphism invariant manner as the elements of $G$ sent to $0$ by every element of the dual $\Z$-module of $G$ (see Corollary~\ref{dualpowermodpoly} in Appendix~\ref{sec:igagt} below). However, the submodule $\Z\oplus 0$ cannot be determined algebraically as shown by the next corollary.

\begin{corollary}\label{cantalgdetect}
Consider the $\Z$-module $G=\Z\oplus\Z[[x]]/\Z[x]$. The elements $(1,\ec{0})$ and $ \pa{1,\ecl{\frac{1}{1-x}}}$ generate a rank two free $\Z$-submodule of $G$.
Further, there is a $\Z$-module automorphism of $G$ that interchanges these two elements.
In particular, $0\oplus \Z[[x]]/\Z[x]$ has unequal complements in $G$.
\end{corollary}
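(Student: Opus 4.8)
The plan is to extract everything from Corollary~\ref{oldconj} and then invoke Corollary~\ref{cantalgdetect} itself is the statement being proved, so instead I would build directly on Corollary~\ref{oldconj} and the structure of the ladder-manifold computation. First I would transport the problem across the isomorphism $f$ of Corollary~\ref{oldconj} (specialized to $R=\Z$): under $f$, the element $(1,\ec{0})$ corresponds to $\ec{(1,0,0)}$ and $\pa{1,\ecl{1/(1-x)}}$ corresponds to $\ec{(0,0,1)}$ in $(\Z\oplus\Z[[x]]\oplus\Z)/K$. So it suffices to prove the three assertions for these two elements in the quotient module $H:=(\Z\oplus\Z[[x]]\oplus\Z)/K$, where $K=\{(\sum\beta_i,\sum\beta_i x^i,-\sum\beta_i)\mid \sum\beta_i x^i\in\Z[x]\}$.

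For the rank-two free claim, I would observe that by Remark~\ref{topdclm} the submodule $(\Z\oplus 0\oplus\Z)/K$ is exactly the set of classes represented by pairs $(r,0,s)$, and the map $(r,s)\mapsto\ec{(r,0,s)}$ is an injection, hence this submodule is free of rank two on the images of $(1,0,0)$ and $(0,0,1)$. Since $\ec{(1,0,0)}$ and $\ec{(0,0,1)}$ are precisely these two images, they generate a rank-two free $\Z$-submodule. For the automorphism interchanging them, the natural candidate is the involution of $\Z\oplus\Z[[x]]\oplus\Z$ that swaps the first and third coordinates and sends the middle coordinate $\sum a_i x^i$ to its ``reversal under $K$''; more cleanly, I would note that the coordinate swap $(r,\sum a_i x^i,s)\mapsto(s,\sum a_i x^i,r)$ does \emph{not} preserve $K$, so I would instead define the automorphism by first writing $K\cong\Z[x]$ via $\beta\mapsto(\beta(1),\beta,-\beta(1))$ and checking that the linear map $\theta(r,\sum a_i x^i,s):=(s,\;(r-s)-\sum a_i x^i,\;r)$ — i.e. negate the middle coordinate and add $(r-s)$ to its constant term, then swap the outer coordinates — is an involution of $\Z\oplus\Z[[x]]\oplus\Z$ carrying $K$ onto $K$, hence descends to an automorphism $\bar\theta$ of $H$; and $\bar\theta(\ec{(1,0,0)})=\ec{(0,1,1)}$, which I would check equals $\ec{(0,0,1)}$ modulo $K$ by subtracting the generator of $K$ indexed by $\beta=1$... at which point the middle term is handled by the relation $(1,1,-1)\in K$. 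The cleanest route is probably just to define the automorphism of $G$ directly by the formula obtained from composing $f^{-1}$, the outer-coordinate swap $(r,\sum a_ix^i,s)\mapsto(s,-\sum a_ix^i+(r-s),r)$, and $f$, and verify algebraically it is an involution swapping the two named elements.

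The last assertion is then immediate and is really the point of the corollary: if $0\oplus\Z[[x]]/\Z[x]$ had a \emph{unique} complement $C$ in $G$, then every automorphism of $G$ preserving $0\oplus\Z[[x]]/\Z[x]$ would preserve $C$; but $0\oplus\Z[[x]]/\Z[x]$ is algebraically canonical (Corollary~\ref{dualpowermodpoly}), so the automorphism $\varphi$ just constructed preserves it, while $\varphi$ sends $\Z\oplus 0$ (which contains $(1,\ec{0})$) to a complement containing $\pa{1,\ecl{1/(1-x)}}$ — and these two complements are distinct because $\pa{1,\ecl{1/(1-x)}}\notin\Z\oplus 0$, as $\ecl{1/(1-x)}\ne\ec{0}$ in $\Z[[x]]/\Z[x]$. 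Hence $0\oplus\Z[[x]]/\Z[x]$ admits unequal complements. I expect the main obstacle to be purely bookkeeping: writing down an explicit involution of $G$ (or of $H$) and confirming it both respects the defining relations of the quotient and genuinely swaps the two elements; once the correct formula is in hand the verification is a short direct computation, but getting the signs and the constant-term correction right in the middle coordinate requires care.
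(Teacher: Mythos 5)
Your overall strategy is the paper's: transport everything through the isomorphism $f$ of Corollary~\ref{oldconj}, get the rank-two free submodule from Remark~\ref{topdclm}, and produce an involution of $(\Z\oplus\Z[[x]]\oplus\Z)/K$ swapping $\ec{(1,0,0)}$ and $\ec{(0,0,1)}$. The first and third assertions are handled correctly (indeed your use of Corollary~\ref{dualpowermodpoly} to see that any automorphism preserves $0\oplus\Z[[x]]/\Z[x]$ makes the ``unequal complements'' step more explicit than the paper does). But the central step --- the automorphism itself --- has a genuine error. Your map $\theta(r,\gamma,s)=(s,(r-s)-\gamma,r)$ does not preserve $K$: applying it to $(\Sigma,\beta,-\Sigma)$ with $\beta\in\Z[x]$ and $\Sigma=\sum\beta_i$ gives $(-\Sigma,\,2\Sigma-\beta,\,\Sigma)$, whose middle term has coefficient sum $\Sigma$, so membership in $K$ would force the first coordinate to be $+\Sigma$; hence $\theta(K)\not\subseteq K$ unless $\Sigma=0$, and $\theta$ does not descend to the quotient. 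It is also not an involution ($\theta^2(r,\gamma,s)=(r,\gamma+2(s-r),s)$), and your claimed verification fails at the level of classes: $(0,1,1)-(1,1,-1)=(-1,0,2)$, and $(0,1,0)\notin K$, so $\ec{(0,1,1)}\neq\ec{(0,0,1)}$.

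The fix is to drop the constant-term correction entirely: the map $(r,\gamma,s)\mapsto(s,-\gamma,r)$ (negate the middle, swap the outer coordinates) \emph{does} carry $K$ onto $K$, since $(\Sigma,\beta,-\Sigma)\mapsto(-\Sigma,-\beta,\Sigma)$ and $-\beta$ has coefficient sum $-\Sigma$; it is visibly an involution and sends $\ec{(1,0,0)}\mapsto\ec{(0,0,1)}$ and back. You considered the plain swap without negation (correctly rejecting it) and then overcorrected; the pure negate-and-swap map is exactly the involution $\rho$ the paper uses, which it obtains topologically from an involution of the ladder manifold $\la{S^n}{S^n}$ interchanging the two stringers and reversing the orientation of each sphere $S_j$. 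With $\rho$ in place of your $\theta$, your argument (set $\psi=f^{-1}\circ\rho\circ f$ and conclude as you do) goes through and coincides with the paper's proof.
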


We emphasize that $G$ does \emph{not} split off $\Z\oplus\Z$ as a direct summand by Corollary~\ref{dualpowermodpoly}.

\begin{proof}
Let $f:G\to (\Z\oplus\Z[[x]]\oplus \Z)/K$ be the isomorphism from Corollary~\ref{oldconj}.
The first conclusion follows by Remark~\ref{topdclm}.
Consider an involution of the ladder manifold $\la{S^n}{S^n}$ (for example, a product of two reflections) that interchanges the stringers, reverses the orientation of each sphere $S_j$, and induces the involution $\rho$ of $(\Z\oplus \Z[[x]]\oplus \Z)/K$ given by
$\ec{(r,\gamma,s)} \mapsto \ec{(s,-\gamma,r)}$.
The automorphism $\psi:G\to G$ given by $\psi=f^{-1}\circ \rho \circ f$ interchanges the desired elements.
\end{proof}

In our proof of the Main Theorem, we will need to algebraically detect the submodule $\Z\oplus 0$ of $G$.
The previous corollary shows that this will require more of the end-cohomology algebra than just the top degree module.
We will use base manifolds $X$ with nontrivial cup products in order to algebraically detect this submodule.

\section{Stringers, Surgered Stringers, and Ladders Based on Surfaces}\label{sec:lbos}

This section classifies all stringers, surgered stringers, and ladder manifolds based on closed surfaces.
It demonstrates various methods for distinguishing end-cohomology algebras up to isomorphism.
In interesting cases, the ring structure plays the deciding role.
This classification of ladders based on surfaces answers a question raised by Calcut and Haggerty~\cite[p.~3295]{calcuthaggerty}.
In addition, its proof is good preparation for the more complicated situations that arise in subsequent sections.
Let $\Sigma_g$ denote the closed, connected, and oriented surface of genus $g\in\Z_{\geq0}$.
Throughout this section, we use integer coefficients.\\

The end-cohomology algebra of the stringer $[0,\infty)\times \Sigma_g$ is
\[
	\zrci{k}{[0,\infty)\times \Sigma_g} \cong \zrc{k}{\Sigma_g} \cong
	\begin{cases}
		\Z &\tn{if $k = 2$,}\\
		\Z^{2g} &\tn{if $k = 1$,}\\
		0 &\tn{otherwise}
	\end{cases}
\]
The cup product pairing $H^1(\Sigma_g)\times H^1(\Sigma_g)\to\Z$ is nonsingular and is given by
$\bigoplus_g
\begin{bmatrix}
0		& 1 \\
-1		& 0
\end{bmatrix}$.\\

The end-cohomology algebra of the surgered stringer $\sst{\Sigma_g}$ is
\[
	\zrci{k}{\sst{\Sigma_g}} \cong
	\begin{cases}
		\Z \oplus \Z[[\sigma]]/\Z[\sigma] &\tn{if $k = 2$,}\\
		\Z^{2g} \oplus \Z[[\tau]]/\Z[\tau] &\tn{if $k = 1$,}\\
		0 &\tn{otherwise}
	\end{cases}
\]
The cup product is coordinatewise in the direct sum, vanishes in the second coordinate, and is that of the cohomology ring of $\Sigma_g$ in the first coordinate.\\

Given $g_1,g_2\in\Z_{\geq0}$, the end-cohomology algebra of the ladder manifold $\la{\Sigma_{g_1}}{\Sigma_{g_2}}$ is
\[
	\zrci{k}{\la{\Sigma_{g_1}}{\Sigma_{g_2}}} \cong
	\begin{cases}
		(\Z \oplus \Z[[\sigma]] \oplus \Z) / K &\tn{if $k = 2$,}\\
		\Z^{2g_1} \oplus \Z[[\tau]]/\Z[\tau] \oplus \Z^{2g_2} &\tn{if $k = 1$,}\\
		0 &\tn{otherwise}
	\end{cases}
\]
where $K = \{\left.(\sum \beta_i, \beta, -\sum \beta_i) \right| \beta = \sum \beta_i\sigma^i \in \Z[\sigma]\} \cong \Z[\sigma]$.
The cup product is coordinatewise in the direct sum and vanishes in the middle coordinate.
Define the matrices
\[
C=
\begin{bmatrix}
0		& \ec{(1,0,0)} \\
\ec{-(1,0,0)}		& 0
\end{bmatrix}
\quad
D=
\begin{bmatrix}
0		& \ec{(0,0,1)} \\
\ec{-(0,0,1)}		& 0
\end{bmatrix}
\]
where $\ec{\alpha}$ is the class of $\alpha$ in $(\Z \oplus \Z[[\sigma]] \oplus \Z) / K$.
For degree one elements, the cup product in the first coordinate is given by $\bigoplus_{g_1} C$,
and in the third coordinate by $\bigoplus_{g_2} D$.\\

Of course, all of these manifolds may be capped with compact $3$-manifolds (handlebodies, for example)
to eliminate boundary and obtain open, one-ended $3$-manifolds.
However, compact caps will not alter the isomorphism types of their graded end-cohomology algebras (which is our focus).
So, we choose to work with the non-capped manifolds. We will use the following basic fact.

\begin{lemma}\label{ranklemma}
Let $F$ be a free $\Z$-module of finite rank. Let $G$ and $H$ be submodules of $F$. Then, $\tn{rank}\pa{G\cap H} \geq \tn{rank}\pa{G} + \tn{rank}\pa{H} -\tn{rank}\pa{F}$.
\end{lemma}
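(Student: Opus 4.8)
The plan is to reduce the statement to elementary linear algebra by tensoring with $\Q$. Recall that for a finitely generated abelian group $A$ one has $\tn{rank}(A)=\dim_\Q(A\otimes_\Z\Q)$, and that $\Q$ is flat over $\Z$ (it is a localization of $\Z$), so that tensoring with $\Q$ carries injections to injections and preserves finite intersections of submodules. Applying $-\otimes_\Z\Q$ to the inclusions $G\hookrightarrow F$ and $H\hookrightarrow F$ therefore realizes $U:=G\otimes_\Z\Q$ and $W:=H\otimes_\Z\Q$ as subspaces of the finite-dimensional $\Q$-vector space $V:=F\otimes_\Z\Q$, with $U\cap W=(G\cap H)\otimes_\Z\Q$.

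The claim then follows from the standard dimension formula: inside the finite-dimensional space $V$ one has $\dim(U\cap W)=\dim U+\dim W-\dim(U+W)$, and since $U+W\subseteq V$ we get $\dim(U+W)\le\dim V$, hence $\dim(U\cap W)\ge\dim U+\dim W-\dim V$. Translating back via $\tn{rank}=\dim_\Q(-\otimes_\Z\Q)$ yields exactly $\tn{rank}(G\cap H)\ge\tn{rank}(G)+\tn{rank}(H)-\tn{rank}(F)$.

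Alternatively, one may avoid invoking preservation of intersections by using the second isomorphism theorem $G/(G\cap H)\cong (G+H)/H$ together with the additivity of rank on short exact sequences of finitely generated abelian groups (itself proved by the exactness of $-\otimes_\Z\Q$): this gives $\tn{rank}(G)-\tn{rank}(G\cap H)=\tn{rank}(G+H)-\tn{rank}(H)\le\tn{rank}(F)-\tn{rank}(H)$, which rearranges to the desired inequality. There is no substantive obstacle in either route; the only point demanding (entirely standard) care is the flatness/exactness input that makes rank behave additively on short exact sequences and compatibly with intersections of submodules.
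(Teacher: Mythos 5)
Your proposal is correct and follows essentially the same route as the paper: both reduce the inequality to $\Q$-dimension counting via $\tn{rank}(-)=\dim_\Q(-\otimes_\Z\Q)$ and flatness of $\Q$, the paper packaging the rank identity $\tn{rank}(G)+\tn{rank}(H)=\tn{rank}(G\cap H)+\tn{rank}(G+H)$ through the exact sequence $0\to G\cap H\to G\oplus H\to G+H\to 0$, which is just a repackaging of your Grassmann formula and second-isomorphism-theorem variants.
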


\begin{proof}
The hypotheses imply that $G$, $H$, and $G+H$ are free $\Z$-modules of rank at most $\tn{rank}\pa{F}$~\cite[p.~460]{dummitfoote}.
We have the exact sequence of free $\Z$-modules
\[
	0 \to G\cap H \to G\oplus H \to G+H \to 0
\]
where the second map is $g\mapsto (g,-g)$ and the third map is $(g,h)\mapsto g+h$.
Recall two facts: (i) if $E$ is a free $\Z$-module, then $\tn{rank}\pa{E} =\tn{dim}_{\Q}\pa{E\otimes_{\Z} \Q}$~\cite[pp.~373~\&~471]{dummitfoote},
and (ii) tensoring with $\Q$ is an exact functor (since $\Q$ is a flat $\Z$-module~\cite[p.~401]{dummitfoote}).
It follows that
\[
\tn{rank}\pa{G} +\tn{rank}\pa{H} = \tn{rank}\pa{G\cap H} + \tn{rank}\pa{G+H}  \leq \tn{rank}\pa{G\cap H} + \tn{rank}\pa{F}
\]
as desired.
\end{proof}

Now, we will classify up to isomorphism the algebras listed for the three types of manifolds: stringers, surgered stringers, and ladder manifolds based on surfaces.
The classification of these manifolds up to various types of equivalence will then readily follow.
Plainly, $\la{X}{Y}\approx\la{Y}{X}$ for any manifolds $X$ and $Y$.

\begin{theorem}\label{classsurf}
Two of the algebras listed are isomorphic if and only if their corresponding manifolds have the same type and are based on surfaces of equal genus, with the exception:
for each $g\in\Z_{\geq0}$ the algebras for $\sst{\Sigma_g}$, $\la{\Sigma_g}{\Sigma_0}$, and $\la{\Sigma_0}{\Sigma_g}$ are isomorphic.
In particular, the algebras for $\la{\Sigma_{g_1}}{\Sigma_{g_2}}$ and $\la{\Sigma_{g_3}}{\Sigma_{g_4}}$ are isomorphic if and only if $\cpa{g_1,g_2}=\cpa{g_3,g_4}$.
\end{theorem}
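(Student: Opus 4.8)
The plan is to organize the argument by first isolating the invariants one can read off from a graded end-cohomology algebra, then to compute them for each of the three families and see exactly which manifolds they separate. The cleanest first step is to handle the degree-one part: for a stringer $[0,\infty)\times\Sigma_g$ the degree-one module is $\Z^{2g}$ with nonsingular cup pairing, for $\sst{\Sigma_g}$ it is $\Z^{2g}\oplus\Z[[\tau]]/\Z[\tau]$, and for $\la{\Sigma_{g_1}}{\Sigma_{g_2}}$ it is $\Z^{2g_1}\oplus\Z[[\tau]]/\Z[\tau]\oplus\Z^{2g_2}$. The summand $\Z[[\tau]]/\Z[\tau]$ is detected algebraically (it is the set of elements killed by every homomorphism to $\Z$, by Corollary~\ref{dualpowermodpoly}), and it is \emph{not} finitely generated; so the presence or absence of that summand separates stringers from the other two families outright. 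That also shows a stringer's algebra can never be isomorphic to a surgered stringer's or a ladder's, and that two stringers are isomorphic iff $2g_1=2g_2$, i.e. iff $g_1=g_2$, since the degree-one free rank is a genuine invariant of the graded algebra.

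Next I would separate surgered stringers from ladder manifolds and, simultaneously, compare ladders with each other. The natural invariant here is the top-degree module ($k=2$) together with the bilinear form $H^1_e\times H^1_e\to H^2_e$ given by cup product. For $\sst{\Sigma_g}$ the top module is $\Z\oplus\Z[[\sigma]]/\Z[\sigma]$, and the image of all cup products of degree-one classes lies in the $\Z\oplus 0$ summand (Remark~\ref{topdclm}); for $\la{\Sigma_{g_1}}{\Sigma_{g_2}}$ the top module is $(\Z\oplus\Z[[\sigma]]\oplus\Z)/K$ with cup products landing in the rank-two free summand $(\Z\oplus 0\oplus\Z)/K$, and the form decomposes as $\bigoplus_{g_1}C\oplus\bigoplus_{g_2}D$ where $C,D$ are the symplectic blocks built from $\ec{(1,0,0)}$ and $\ec{(0,0,1)}$. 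The key numerical invariant I would extract is: the rank of the subgroup of the top-degree module spanned by all cup products of degree-one elements, i.e. the rank of the image of the cup-product form. For $\sst{\Sigma_g}$ with $g\ge 1$ this image has rank $1$ (it is generated by $\ec{(1,0,0)}$, since the nonsingular symplectic form on $\Z^{2g}$ surjects onto the $\Z$ into which it lands); for $g=0$ it is rank $0$. For $\la{\Sigma_{g_1}}{\Sigma_{g_2}}$ the image is spanned by $\ec{(1,0,0)}$ when $g_1\ge 1$ and by $\ec{(0,0,1)}$ when $g_2\ge 1$; since $\ec{(1,0,0)}$ and $\ec{(0,0,1)}$ are independent in $(\Z\oplus 0\oplus\Z)/K$, this image has rank $2$ if $g_1\ge 1$ and $g_2\ge 1$, rank $1$ if exactly one of $g_1,g_2$ is positive, and rank $0$ if both vanish. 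That rank-$2$ versus rank-$1$ dichotomy is precisely what forces a ladder $\la{\Sigma_{g_1}}{\Sigma_{g_2}}$ with both genera positive to be non-isomorphic to any surgered stringer, and it is also what will drive the $\{g_1,g_2\}=\{g_3,g_4\}$ conclusion for ladders with genuinely two nontrivial summands.

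The remaining cases to pin down are the coincidences. When one ladder genus is $0$, Proposition~\ref{ss_lm} already gives $\la{\Sigma_g}{\Sigma_0}$ and $\sst{\Sigma_g}$ diffeomorphic ends (hence isomorphic algebras), so those three algebras $\sst{\Sigma_g}$, $\la{\Sigma_g}{\Sigma_0}$, $\la{\Sigma_0}{\Sigma_g}$ coincide; symmetry $\la{X}{Y}\approx\la{Y}{X}$ handles the swap. To finish the surgered-stringer comparisons: $\sst{\Sigma_{g_1}}\cong\sst{\Sigma_{g_2}}$ forces $2g_1=2g_2$ by the free rank of $H^1_e$, so $g_1=g_2$. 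For two ladders with all genera positive: the degree-one free part has rank $2g_1+2g_2=2g_3+2g_4$, and to get the individual genera back I would look at the orthogonal decomposition of the symplectic form $H^1_e/(\text{the }\Z[[\tau]]/\Z[\tau]\text{ part})$ with respect to \emph{which} generator of the rank-two cup-image each symplectic block maps into — the blocks mapping to $\ec{(1,0,0)}$ total rank $2g_1$ and those mapping to $\ec{(0,0,1)}$ total rank $2g_2$, an unordered pair that is an isomorphism invariant. Hence $\{g_1,g_2\}=\{g_3,g_4\}$. The converse direction (equal data $\Rightarrow$ isomorphic algebras) is immediate from the displayed formulas plus the swap symmetry and Proposition~\ref{ss_lm}.

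The main obstacle I anticipate is making the ``which generator does a symplectic block cup into'' invariant genuinely well-defined and isomorphism-invariant: an abstract graded-algebra isomorphism need not respect the internal direct-sum decomposition of the top module, and the submodule $\Z\oplus 0$ of $G=\Z\oplus\Z[[x]]/\Z[x]$ is emphatically \emph{not} algebraically detectable (Corollary~\ref{cantalgdetect}). So I must phrase everything in terms of quantities that survive arbitrary algebra isomorphisms — ranks of images of cup-product pairings, and the unordered partition of the symplectic rank induced by the finitely many distinct values the cup product takes modulo the (algebraically invisible) non-finitely-generated part. Getting that bookkeeping right — essentially, checking that ``rank of the cup-image'' and ``the multiset of ranks of the primary symplectic pieces'' are legitimate invariants in the presence of the $\Z[[\sigma]]/\Z[\sigma]$ noise — is the delicate heart of the proof; the rest is assembling the displayed formulas and invoking Lemma~\ref{ranklemma} and the appendix.
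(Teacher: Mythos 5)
Most of your outline coincides with the paper's proof: stringers are separated from surgered stringers and ladders by the degree-one module (you detect the $\Z[[\tau]]/\Z[\tau]$ part via the dual-annihilator, the paper simply uses cardinality---either works), surgered stringers of different genera by the rank of the dual of $\wt{H}^1_e$ (Corollary~\ref{dualpowermodpoly}), surgered stringers versus ladders with both genera positive by the rank of the subgroup of the top module spanned by cup products of degree-one classes, and the exceptional coincidences by Proposition~\ref{ss_lm}. The genuine gap is the final and hardest case, which carries the ``in particular'' clause: two ladders $\la{\Sigma_{g_1}}{\Sigma_{g_2}}$ and $\la{\Sigma_{g_3}}{\Sigma_{g_4}}$ with $g_1+g_2=g_3+g_4$ (forced by the dual ranks) but $\cpa{g_1,g_2}\neq\cpa{g_3,g_4}$, e.g.\ $\cpa{3,1}$ versus $\cpa{2,2}$. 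Your proposed invariant---``the blocks cupping into $\ec{(1,0,0)}$ total rank $2g_1$, those cupping into $\ec{(0,0,1)}$ total rank $2g_2$, an unordered pair that is an isomorphism invariant''---is essentially the statement to be proved, not a tool for proving it: an abstract graded-algebra isomorphism preserves the rank-two cup-image $(\Z\oplus0\oplus\Z)+K$ as a subgroup but need not respect any preferred pair of generators of it (Corollary~\ref{cantalgdetect} exhibits a module automorphism interchanging the two classes), nor need it respect the splitting of the degree-one module into the two symplectic blocks. You identify exactly this as the ``delicate heart'' and then stop, so the decisive step is missing; saying the rest is ``invoking Lemma~\ref{ranklemma}'' does not indicate how.

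For comparison, the paper closes this case by contradiction. After quotienting by the ideal $J$ of degree-one elements annihilated by the dual, suppose $\overline{f}$ is an isomorphism and, without loss of generality, $g_1>g_3\geq g_4>g_2\geq 0$. Let $V=\Z^{2g_1}\oplus 0\oplus 0$; cup products of elements of $V$ span the rank-one subgroup $(\Z\oplus0\oplus0)+K$, hence cup products of elements of $\overline{f}(V)$ also span a rank-one subgroup. But $\overline{f}(V)$ has rank $2g_1$ inside a free module of rank $2g_3+2g_4=2g_1+2g_2$, so Lemma~\ref{ranklemma} forces it to meet both $\Z^{2g_3}\oplus0\oplus0$ and $0\oplus0\oplus\Z^{2g_4}$ nontrivially; nonsingularity of the genus-$g_1$ cup pairing, transported by $\overline{f}$, then yields nonzero products lying in $(\Z\oplus0\oplus0)+K$ and in $(0\oplus0\oplus\Z)+K$ respectively, which span a rank-two subgroup---a contradiction. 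Any rescue of your ``unordered partition'' formulation would need an argument of essentially this shape; as written, your proposal asserts its invariance rather than establishing it.
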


\begin{proof}
For stringers based on surfaces with unequal genus, the algebras are distinguished by the ranks of $\wt{H}^1_{e}$.
The algebras for a stringer and a surgered stringer or a ladder manifold are distinguished by the cardinalities of $\wt{H}^1_{e}$.
Corollary~\ref{dualpowermodpoly} implies that the algebras for surgered stringers based on surfaces with unequal genus are distinguished by the ranks of the duals of $\wt{H}^1_{e}$.\\

For each $g\in\Z_{\geq0}$, the manifolds $\sst{\Sigma_g}$ and $\la{\Sigma_g}{\Sigma_0}\approx\la{\Sigma_0}{\Sigma_g}$ have diffeomorphic ends by Proposition~\ref{ss_lm}.
So, their algebras are isomorphic.
In all other cases, the algebras for $\sst{\Sigma_g}$ and $\la{\Sigma_{g_1}}{\Sigma_{g_2}}$ are not isomorphic.
If $g_2=0$ and $g_1\neq g$ (or $g_1=0$ and $g_2\neq g$), then use the ranks of the duals of $\wt{H}^1_{e}$.
If $g_1\neq0$ and $g_2\neq0$, then use the ranks of the (degree two) subgroups generated by all cup products of degree one elements.
For $\sst{\Sigma_g}$ this rank is zero or one, and for $\la{\Sigma_{g_1}}{\Sigma_{g_2}}$ it is two (see Remark~\ref{topdclm}).\\

It remains to classify the algebras for ladder manifolds based on surfaces.
Suppose the following is an isomorphism
\[
f:\zrci{\ast}{\la{\Sigma_{g_1}}{\Sigma_{g_2}}} \to \zrci{\ast}{\la{\Sigma_{g_3}}{\Sigma_{g_4}}}
\]
Corollary~\ref{dualpowermodpoly} implies that the ranks of the duals of $\wt{H}^1_{e}$ are $2g_1+2g_2$ and $2g_3+2g_4$ respectively.
So, $g_1+g_2=g_3+g_4$. Suppose, by way of contradiction, that $\cpa{g_1,g_2}\neq\cpa{g_3,g_4}$.
Then, $g_1+g_2=g_3+g_4$ implies that some $g_i$ is strictly greater than the other three.
Without loss of generality, we have
\[
g_1 > g_3 \geq g_4 > g_2\geq0
\]

We will reach a contradiction using the ring structures.
First, we eliminate the $\Z[[\tau]]/\Z[\tau]$ summands in an isomorphism invariant manner.
Let $J$ denote the set of elements in $\zrci{1}{\la{\Sigma_{g_1}}{\Sigma_{g_2}}}$ that are sent to $0$ by every element in the dual of $\zrci{1}{\la{\Sigma_{g_1}}{\Sigma_{g_2}}}$.
Note that $J$ is a subgroup of $\zrci{1}{\la{\Sigma_{g_1}}{\Sigma_{g_2}}}$ and, in fact, is an ideal in $\zrci{\ast}{\la{\Sigma_{g_1}}{\Sigma_{g_2}}}$.
Similarly, we define the ideal $J'$ in $\zrci{\ast}{\la{\Sigma_{g_2}}{\Sigma_{g_3}}}$.
Evidently, $f(J)=J'$ and so we obtain an induced isomorphism of the quotient algebras where we mod out by $J$ and $J'$ respectively.
Corollary~\ref{dualpowermodpoly} implies that $J=0\oplus \Z[[\tau]]/\Z[\tau] \oplus 0$ (and similarly for $J'$).
Therefore, we have an isomorphism $\overline{f}:A\to B$ of the algebras
\[
	A=
	\begin{cases}
		(\Z \oplus \Z[[\sigma]] \oplus \Z) / K &\tn{if $k = 2$,}\\
		\Z^{2g_1} \oplus 0 \oplus \Z^{2g_2} &\tn{if $k = 1$,}\\
		0 &\tn{otherwise}
	\end{cases}
	\quad  B=
		\begin{cases}
		(\Z \oplus \Z[[\sigma]] \oplus \Z) / K &\tn{if $k = 2$,}\\
		\Z^{2g_3} \oplus 0 \oplus \Z^{2g_4} &\tn{if $k = 1$,}\\
		0 &\tn{otherwise}
	\end{cases}
\]
Let $V=\Z^{2g_1}\oplus 0 \oplus 0$, a rank $2g_1$ and degree one submodule of $A$.
Recalling Remark~\ref{topdclm}, cup products of elements of $V$ generate $(\Z\oplus0\oplus0)+K$, a rank one and degree two submodule of $A$.
We will show that cup products of elements of $\overline{f}(V)$ generate a rank \emph{two} and degree two submodule of $B$.
This contradiction will complete the proof.\\

Note the following facts.
For each element $0\neq\alpha\in V$, there exists $\alpha'\in V$ such that $\alpha\cup\alpha'\neq0$ (since the degree one cup product pairing for $\Sigma_{g_1}$ is nonsingular).
As $\overline{f}$ is an isomorphism, the previous fact holds for $\overline{f}(V)$ as well.
If $\gamma\in \Z^{2g_3}\oplus0\oplus0$ and $\delta$ has degree one, then $\gamma\cup\delta\in (\Z\oplus0\oplus0)+K$.
Similarly, if $\gamma\in 0\oplus0\oplus\Z^{2g_4}$ and $\delta$ has degree one, then $\gamma\cup\delta\in (0\oplus0\oplus\Z)+K$.
The last two facts hold since the cup product is coordinatewise.\\

Recalling that $g_1+g_2=g_3+g_4$ and $g_1 > g_3 \geq g_4 > g_2\geq0$, Lemma~\ref{ranklemma} implies that there exist elements
\begin{align*}
0\neq \alpha &\in \overline{f}(V) \cap (\Z^{2g_3}\oplus0\oplus0) \\
0\neq \beta &\in \overline{f}(V) \cap (0\oplus0\oplus\cap \Z^{2g_4})
\end{align*}
By the previous paragraph, there exist $\alpha',\beta'\in\overline{f}(V)$ such that
\begin{align*}
0\neq \alpha\cup\alpha' &\in (\Z\oplus0\oplus0)+K  \\
0\neq \beta\cup\beta'   &\in (0\oplus0\oplus\Z)+K
\end{align*}
By Remark~\ref{topdclm}, $(\Z\oplus0\oplus\Z)+K$ is free of rank two.
So, these two nonzero cup products generate a rank two submodule of degree two.
This contradiction completes the proof.
\end{proof}

\begin{corollary}
Ladder manifolds $\la{\Sigma_{g_1}}{\Sigma_{g_2}}$ and $\la{\Sigma_{g_3}}{\Sigma_{g_4}}$
based on surfaces of genera $g_1,g_2,g_3,g_4\in\Z_{\geq0}$ are proper homotopy equivalent
if and only if $\cpa{g_1,g_2}=\cpa{g_3,g_4}$.
Hence, the same classification holds up to homeomorphism and up to diffeomorphism.
\end{corollary}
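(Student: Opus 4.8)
The plan is to deduce this corollary from the algebra classification in Theorem~\ref{classsurf} together with the proper homotopy invariance of the end-cohomology algebra recorded in Corollary~\ref{phiec}.

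First I would dispatch the ``if'' direction. If $\cpa{g_1,g_2}=\cpa{g_3,g_4}$, then either $(g_1,g_2)=(g_3,g_4)$ and the two manifolds coincide, or $(g_1,g_2)=(g_4,g_3)$ and the evident diffeomorphism interchanging the two stringers (noted just before Theorem~\ref{classsurf}) gives $\la{\Sigma_{g_1}}{\Sigma_{g_2}}\approx\la{\Sigma_{g_3}}{\Sigma_{g_4}}$. In either case the manifolds are diffeomorphic, hence homeomorphic, hence proper homotopy equivalent.

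For the ``only if'' direction, suppose $\la{\Sigma_{g_1}}{\Sigma_{g_2}}$ and $\la{\Sigma_{g_3}}{\Sigma_{g_4}}$ are proper homotopy equivalent. Viewing each as the closed pair $(\,\cdot\,,\emptyset)$ and applying Corollary~\ref{phiec} with $R=\Z$, their graded end-cohomology algebras are isomorphic as graded $\Z$-algebras. Theorem~\ref{classsurf} then forces $\cpa{g_1,g_2}=\cpa{g_3,g_4}$, which completes the equivalence.

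The last sentence follows formally: diffeomorphic always implies homeomorphic, which always implies proper homotopy equivalent, and we have just shown that for these ladder manifolds proper homotopy equivalence already forces $\cpa{g_1,g_2}=\cpa{g_3,g_4}$, which by the ``if'' direction returns a diffeomorphism; so the three relations agree on this family. I do not expect a genuine obstacle here: all of the substance lives in Theorem~\ref{classsurf}, whose ring-theoretic core---the rank comparison of cup-product images when $g_1>g_3\ge g_4>g_2$, fed by Corollary~\ref{dualpowermodpoly} on $\Z[[x]]/\Z[x]$---is the \emph{hard part}. The only point deserving a word of care is that Corollary~\ref{phiec} is phrased for closed pairs, so one should note that a proper homotopy equivalence of the underlying spaces (ignoring boundary) suffices, since $\wt{H}^{\ast}_e(\,\cdot\,;\Z)$ depends only on the ambient manifold as a space.
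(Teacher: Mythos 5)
Your proposal is correct and is essentially the argument the paper intends: the corollary is an immediate consequence of Theorem~\ref{classsurf} (whose final sentence classifies the ladder algebras by $\cpa{g_1,g_2}$) together with proper homotopy invariance of the end-cohomology algebra (Corollary~\ref{phiec}), with the ``if'' direction supplied by the diffeomorphism $\la{X}{Y}\approx\la{Y}{X}$ noted just before the theorem. All the substance indeed resides in Theorem~\ref{classsurf}, and your handling of the implications among diffeomorphism, homeomorphism, and proper homotopy equivalence is exactly as the paper presumes.
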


\section{End-Cohomology Algebra of Binary End-Sum}
\label{sec:cai_csi}

We present a proof of an unpublished result of Henry King.
It computes the end-cohomology algebra of a binary end-sum
in terms of the algebras of the two summands together with certain ray-fundamental classes determined by the rays
used to perform the end-sum.\\

First, recall the analogue for classical connected sum. Consider two closed, connected, oriented $n$-manifolds $X$ and $Y$.
The reduced cohomology ring $\wt{H}^{\ast}\pa{X\# Y}$ is isomorphic to the quotient of the sum
$\wt{H}^{\ast}\pa{X}\oplus \wt{H}^{\ast}\pa{Y}$ by the principal ideal generated by $\pa{\br{X}^*,-\br{Y}^*}$ where $\br{X}^*\in H^n\pa{X}$ and $\br{Y}^*\in H^n\pa{Y}$ are the cohomology fundamental classes dual to the respective homology (orientation) fundamental classes. The cup product is coordinatewise in the sum.
For the unreduced ring $H^{\ast}\pa{X\# Y}$, let $P$ be the subring of $H^{\ast}\pa{X}\oplus H^{\ast}\pa{Y}$ consisting of all elements of positive degree and only those of degree zero of the form $(r,r)$ for $r\in \Z$. The desired ring is the quotient of $P$ by the principal ideal generated by $\pa{\br{X}^*,-\br{Y}^*}$.
One may prove these well-known facts by an argument structurally the same as our proof of Theorem~\ref{thm:King} below.
For end-sum and end-cohomology, the cohomology fundamental classes will be replaced by ray-fundamental classes that we now define.\\

Let $M$ be a smooth, connected, oriented, noncompact manifold of dimension $n+1\geq 2$ with compact (possibly empty) boundary.
Let $r\subset \Int{M}$ be a ray, and let $\nu r\subset \Int{M}$ be a smooth closed regular neighborhood of $r$ in $\Int{M}$ oriented as a codimension-0 submanifold of $M$.
Orient the hyperplane $\partial \nu r \approx \R^n$ as the boundary of $\nu r$.
We will define nonzero cohomology classes
\begin{align*}
\br{M,r}^*_e &\in \ci{n}{M,\nu r}{R} \\
\br{r}^*_e &\in \rci{n}{M}{R}
\end{align*}
called (respectively) the \emph{relative} and \emph{absolute ray-fundamental classes} determined by the ray $r$.
Our notation is chosen since, as will emerge, these elements are intimately related to classical fundamental classes of compact manifolds.\\

Recall that a Morse function $h\colon M \to \R$ is \emph{exhaustive} provided $h$ is proper and the image of $h$ is bounded below.

\begin{lemma}\label{lem:morse}
There exists an exhaustive Morse function $h\colon M \to \R$ such that: (i) $\left.h\right|r$ is projection, (ii) $\left.h\right|\nu r$ has just one critical point,
namely a global minimum in $\partial \nu r$, (iii) $h^{-1}([t,\infty))\cap(\nu r,\partial \nu r)\approx [t,\infty) \times (D^n,S^{n-1})$ for each $t\geq0$, and (iv) each $j\in \Z_{\geq 0}$ is a regular value of $h$.
\end{lemma}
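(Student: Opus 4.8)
The plan is to construct $h$ in stages, starting from a standard model near the ray and extending outward. First I would fix a diffeomorphism of a closed tubular (regular) neighborhood $\nu r \approx [0,\infty)\times D^n$ under which $r$ corresponds to $[0,\infty)\times\{0\}$; on this neighborhood, set $h$ equal to projection onto the $[0,\infty)$-factor. This immediately gives (i), and it gives a version of (ii) and (iii): $\left.h\right|\nu r$ is the projection $[0,\infty)\times D^n\to[0,\infty)$, which has no interior critical points and whose only ``critical'' behavior is the boundary minimum along $\{0\}\times D^n\subset\partial\nu r$; likewise $h^{-1}([t,\infty))\cap\nu r = [t,\infty)\times D^n$ with its boundary face $[t,\infty)\times S^{n-1}=h^{-1}([t,\infty))\cap\partial\nu r$, which is exactly (iii). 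Using a collar of $\partial M$ (which is compact) one arranges that $h$ is constant and equal to some value below $0$ on a neighborhood of $\partial M$, with no critical points there, so that $h$ will be exhaustive with image bounded below once extended properly.

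Next I would extend $h$ over the rest of $M$. Since $M$ is connected, noncompact, with compact boundary, pick any smooth proper function $g\colon M\to\R$ bounded below (e.g. from a compact exhaustion), and modify it so that it agrees with the already-defined $h$ on $\nu r$ and on the collar of $\partial M$: multiply the ``discrepancy'' $g-h$ by a smooth bump function supported away from those regions, or more simply patch via a partition of unity subordinate to $\{\Int\nu r\cup(\text{collar}),\ M\setminus r\}$, taking care that the result remains proper and bounded below (properness is preserved because on the noncompact piece $\nu r$ the function is already the honest projection, and elsewhere one is only adding a bounded correction on a set whose complement is handled by $g$). Then invoke the standard fact that exhaustive proper smooth functions can be $C^\infty$-approximated, rel a closed set on which they are already Morse, by exhaustive \emph{Morse} functions (Milnor/Hirsch-style Morse theory on manifolds with boundary and corners): approximate $h$ rel $\nu r\cup(\text{collar of }\partial M)$ to make it Morse. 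This preserves (i), (ii), (iii) verbatim since those hold on the fixed closed set.

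Finally, to get (iv) — that each nonnegative integer is a regular value — note that after the above construction the critical values form a discrete subset of $\R$ (by properness), hence countable and nowhere dense; compose $h$ with an orientation-preserving diffeomorphism $\varphi\colon\R\to\R$ that is the identity on a neighborhood of each integer's eventual target and drags the (countably many) critical values off the integers. More carefully, since the critical values are discrete and the integers are discrete, one can choose $\varphi$ fixing $(-\infty,0]$ pointwise and fixing a neighborhood of each integer, supported in the gaps, so that no critical value lands on an integer; because $\varphi$ is a diffeomorphism fixing the model behavior on $\nu r$ (where $h$ already has values in $[0,\infty)$ via projection and $\varphi$ may need adjusting there — instead arrange this by a preliminary rescaling of the $[0,\infty)$-coordinate so the ray's heights avoid integer critical values, which is vacuous as $\left.h\right|\nu r$ has no critical values), properties (i)–(iii) persist. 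The main obstacle is the bookkeeping in this last step: one must simultaneously keep the projection normalization on $\nu r$ (so that ``height $j$'' slices behave as in Figures~\ref{fig:sscob} and~\ref{fig:ladder}), keep $h$ exhaustive and Morse, and still be free to nudge the at-most-countably-many interior critical values off $\Z_{\geq0}$; this is routine but requires choosing the perturbation supported in the complement of $r$ and of a neighborhood of $\partial M$, using that those two pieces are closed and that Morse perturbations rel a closed set are available.
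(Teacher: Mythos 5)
Your overall route is genuinely different from the paper's (which embeds $M$ in $\R^{2n+3}$ by Whitney, straightens $r$ and untwists $\nu r$ by an ambient isotopy, and takes $h(x)=\norm{x-p}^2+c$ for a generic $p$, so that Morse-ness, exhaustiveness, and (i)--(iv) all come out of the straight-tube picture at once), and your steps (1)--(3) --- projection on $\nu r$, a proper extension, and Morse approximation relative to a closed set where $h$ is already free of critical points --- are workable. The genuine gap is in your step for (iv). A diffeomorphism $\varphi\colon\R\to\R$ that fixes a neighborhood of each integer and is supported in the gaps cannot do the job: if $c\in\Z_{\geq 0}$ is a critical value of $h$, then $\varphi(c)=c$ is still an integer critical value of $\varphi\circ h$, so your description is backwards. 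Worse, any $\varphi$ that is not the identity on $[0,\infty)$ destroys (i) and the exact projection normalization on $\nu r$ that your (ii)--(iii) depend on, while a $\varphi$ that is the identity on $[0,\infty)$ cannot move any positive critical value at all. Your parenthetical repair (``a preliminary rescaling of the $[0,\infty)$-coordinate \dots vacuous as $\left.h\right|\nu r$ has no critical values'') misses the actual problem: the offending critical values come from critical points in $M-\nu r$, and post-composition cannot avoid them without also disturbing the values on $\nu r$.

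The fix is easy and should replace the post-composition. Since $h$ is proper and Morse with no critical points on the closed set $\nu r$, its critical points form a locally finite subset of $M-\nu r$, and each lies at positive distance from $\nu r$. Around each critical point choose a small ball missing $\nu r$ and the other critical points, and add a small constant via a bump function equal to $1$ near the critical point, with the constant chosen smaller than the minimum of $\norm{dh}$ on the compact annulus where the bump varies; this moves each critical value off $\Z_{\geq 0}$, creates no new critical points, preserves properness, and leaves $h$ untouched on $\nu r$. (Equivalently, build this into your relative Morse-approximation step.) Two smaller points: as you half-acknowledge, your $h$ does not literally satisfy (ii) --- $\left.h\right|\nu r$ is a submersion with no critical point, and its minimum is attained on the whole disk $\partial\nu r\cap h^{-1}(0)$ rather than at a single point; this is harmless for how the lemma is used later (one still gets $B_j=\partial\nu r\cap K_j\approx D^n$, $F_j\approx[j,\infty)\times D^n$, etc.), but you should either flag the discrepancy or tilt the model function so the minimum is a single nondegenerate point of $\partial\nu r$. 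Also ``$h$ is constant on a neighborhood of $\partial M$ with no critical points there'' is self-contradictory as written; you mean $h$ is a negative constant on $\partial M$ and, say, a shifted collar coordinate nearby.
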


\begin{proof}
By Whitney's embedding theorem, we may assume $M\subset\R^{2n+3}$ is a submanifold that is embedded as a closed subspace.
As $2n+3>3$, we may assume, by an ambient isotopy of $\R^{2n+3}$, that $r$ is straight in $\R^{2n+3}$.
Next, ambiently untwist $\nu r$ while fixing $r$.
Define $h(x) := \norm{x-p}^2 + c$ for an appropriate point $p \in \R^{2n+3}$ and $c\in \R$~\cite[p.~36]{milnor_morse}.
\end{proof}

Let $h\colon M \to \R$ be a Morse function given by Lemma~\ref{lem:morse}.
For each $j\in \Z_{\geq 0}$, define $M_j := h^{-1}([j,\infty))$ and $K_j:=h^{-1}((-\infty,j])$,
both oriented as codimension-0 submanifolds of $M$ (see Figure~\ref{fig:morse}).
\begin{figure}[htbp!]
	\centerline{\includegraphics[scale=1]{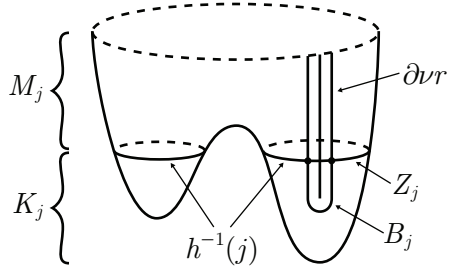}}
	\caption{Manifold $M^{n+1}$ where the Morse function $h$ is depicted as height. The $(n-1)$-sphere $\partial \widehat{Z}_j=\partial B_j$ is depicted as two dots.}
	\label{fig:morse}
\end{figure}
The $K_j$ provide a compact exhaustion of $M$.
For all sufficiently large $j$, the boundary of $M$ (compact by hypothesis) is contained in the interior of $K_j$;
without loss of generality, we assume this holds for all $j\in\Z_{\geq 0}$
(shrink $r$ towards infinity if necessary).
So, for all $j\in\Z_{\geq 0}$, $h^{-1}(j)=K_j \cap M_j$ is a finite disjoint union of closed, connected $n$-manifolds.
Let $Z_j$ be the component of $h^{-1}(j)$ that meets $\nu r$, and let $\widehat{Z}_j := Z_j - \Int{\nu r}$, both oriented as codimension-$0$ submanifolds of $\partial K_j$.
The $(n-1)$-sphere $\partial \widehat{Z}_j$ is given the boundary orientation.
Define $B_j:=\partial \nu r\cap K_j \approx D^n$ oriented as a codimension-$0$ submanifold of $\partial \nu r$.
Observe that $\partial \widehat{Z}_j=\partial B_j$ as oriented $(n-1)$-spheres.\\

For each $j\in\Z_{\geq 0}$, we define the following\footnote{Notational mnemonic: intuitively
$\widehat{X}$ is a ``nicely punctured'' copy of $X$.} (see Figure~\ref{fig:manifold_notation}).
\begin{align*}
\widehat{M}_j &:= M_j -\Int{\nu r}\\
F_j  &:= \nu r \cap M_j \approx [j,\infty)\times D^n\\
\widehat{F}_j &:=\partial \nu r \cap M_j \approx [j,\infty)\times S^{n-1}\\
\Delta_j &:= \nu r \cap Z_j \approx D^n
\end{align*}
\begin{figure}[htbp!]
	\centerline{\includegraphics[scale=1]{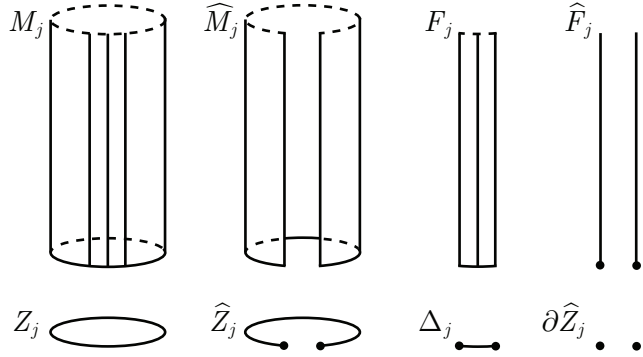}}
	\caption{Manifold $M_j$ and some relevant submanifolds.}
	\label{fig:manifold_notation}
\end{figure}
The fundamental class $\br{\partial\widehat{Z}_j}$
is our preferred generator of $H_{n-1}\pa{\widehat{F}_j}$.
By Universal Coefficients, its dual $\br{\partial\widehat{Z}_j}^*$ is our preferred generator of
\[
\hom{\Z}{H_{n-1}\pa{\widehat{F}_j}}{R} \cong \rc{n-1}{\widehat{F}_j}{R}\cong R
\]
where the latter isomorphism sends our preferred generator to  $1\in R$.
In the direct system $\rc{n-1}{\widehat{F}_j}{R}$, $j\in\Z_{\geq 0}$,
each morphism is an isomorphism carrying one preferred generator to another.
Therefore, the direct limit
\[
\rci{n-1}{\partial \nu r}{R} \cong \ilim \rc{n-1}{\widehat{F}_j}{R} \cong R
\]
has a preferred generator $\br{\partial\widehat{Z}_j}^*_e$ that is represented by each $\br{\partial\widehat{Z}_j}^*$.
In the proof of Theorem~\ref{thm:King} below, we use $\gamma_M$ to denote $\br{\partial\widehat{Z}_j}^*_e$.\\

The inclusion $\iota_j \colon \pa{\widehat{Z}_j,\partial \widehat{Z}_j} \to \pa{\widehat{M}_j, \widehat{F}_j}$ induces the following diagram, where the rows are the long exact sequences for pairs.
\begin{equation}\label{eq:coboundary}\begin{split}
\xymatrix{
	\ar[r] & \rc{n-1}{\widehat{F}_j}{R} \ar@{^{(}->}[r]^-{\delta_j} \ar[d]^{\iota_j^*}_-{\cong} & \cohom{n}{\widehat{M}_j, \widehat{F}_j}{R} \ar[d]^{\iota_j^*} \ar[r] &\\
	\ar[r] & \rc{n-1}{\partial \widehat{Z}_j}{R} \ar[r]^-{\delta'_j}_{\cong} & \cohom{n}{\widehat{Z}_j,\partial \widehat{Z}_j}{R} \ar[r] &
}
\end{split}
\end{equation}
The diagram is commutative by naturality of the coboundary map.
As $\delta'_j$ and the left $\iota_j^*$ are isomorphisms, $\delta_j$ is injective.
We have the diagram
\begin{equation}\label{eq:defchareltsj}\begin{split}
\xymatrix@R=0pt{
     H^{n-1}\pa{\widehat{F}_j;R} \ar@{^{(}->}[r]^-{\delta_j} & H^n\pa{\widehat{M}_j, \widehat{F}_j;R} & 
		H^n\pa{M_j,F_j;R} \ar[l]_-{\phi_j}^-{\cong} \ar[r]^-{\psi_j}_-{\cong} & H^n\pa{M_j;R}\\
    \br{\partial\widehat{Z_j}}^* \ar@{|-{>}}[r] & \delta_j\pa{\br{\partial\widehat{Z}_j}^*} &
		\ar@{|-{>}}[l] \br{M,r}^*_j \ar@{|-{>}}[r] & \br{r}^*_j
    }
\end{split}
\end{equation}
where $\phi_j$ is the excision isomorphism, $\psi_j$ is the isomorphism from the long exact sequence for the pair, and
$\br{M,r}^*_j$ and $\br{r}^*_j$ are \emph{defined} by the diagram.
Consider the commutative diagram $\mathcal{D}$ whose $j$th row, $j\in\Z_{\geq 0}$, equals~\eqref{eq:defchareltsj}.
The four vertical maps in $\mathcal{D}$ from row $j$ to row $j+1$ are inclusion induced.
Passing to the direct limit in $\mathcal{D}$ yields
\begin{equation}\label{eq:defcharelts}\begin{split}
\xymatrix@R=0pt{
     \rci{n-1}{\partial \nu r}{R} \ar@{^{(}->}[r]^-{\delta_M} & \ci{n}{\widehat{M}, \partial \nu r}{R} & \ci{n}{M, \nu r}{R} \ar[l]_-{\phi_M}^-{\cong} \ar[r]^-{\psi_M}_-{\cong} & \rci{n}{M}{R}\\
    \br{\partial\widehat{Z}_j}^*_e \ar@{|-{>}}[r] & \delta_M\pa{\br{\partial\widehat{Z}_j}^*_e} &
		\ar@{|-{>}}[l] \br{M,r}^*_e \ar@{|-{>}}[r] & \br{r}^*_e
    }
\end{split}
\end{equation}
where $\delta_M$ is injective, $\widehat{M}:=M - \Int \nu r$, $\phi_M$ is the excision isomorphism,
and $\psi_M$ is the isomorphism from the long exact sequence for the pair.
The relative and absolute ray-fundamental classes $\br{M,r}^*_e \in \ci{n}{M,\nu r}{R}$ and $\br{r}^*_e \in \rci{n}{M}{R}$
are \emph{defined} by~\eqref{eq:defcharelts}.\\

\begin{remarks}\label{chareltremarks}
\noindent
\begin{enumerate}[label=(\arabic*),leftmargin=*]\setcounter{enumi}{0}
\item\label{reprayclass} Let $\mathcal{D}_e$ be the diagram $\mathcal{D}$ augmented by
the direct limit row~\eqref{eq:defcharelts} together with the canonical maps in each column
from the terms in the direct system to the direct limit.
The diagram $\mathcal{D}_e$ is commutative and shows immediately that each
$\br{M,r}^*_j$ and $\br{r}^*_j$ represent (respectively) $\br{M,r}^*_e$ and $\br{r}^*_e$.
This observation holds \emph{without} any additional assumptions on $\mathcal{D}$
(such as surjectivity of the vertical maps in the last column).
\item The ray-fundamental classes are well-defined, up to isomorphism, independent of the choice of regular neighborhood $\nu r$ by uniqueness of such neighborhoods. They are also independent of the Morse function $h$ satisfying Lemma~\ref{lem:morse}. To see this, let $h'$ be another such Morse function and distinguish corresponding submanifolds of $M$ by primes.
As our Morse functions are exhaustive, each $M_j$ contains $M'_k$ for all sufficiently large $k$, and conversely.
It follows that
\[
 \ci{\ast}{M}{R} \cong \ilim H^*(M_j;R) \cong \ilim H^*(M'_j;R)
\]
and the latter of these isomorphisms carries the absolute ray-fundamental classes to one another. A similar argument applies to the relative case.
\item If $r\subset M$ is neatly embedded, then we define the ray-fundamental classes
$\br{M,r}^*_e$ and $\br{r}^*_e$ as follows. 
As in Figure~\ref{fig:morse2} (left), let $\tau r\subset M$ be a
smooth closed tubular neighborhood of $r$ in $M$
(see Section~\ref{sec:conventions} for our conventions on tubular neighborhoods). 
\begin{figure}[htbp!]
	\centerline{\includegraphics[scale=1]{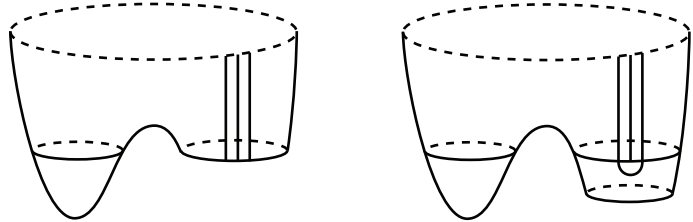}}
	\caption{Manifold $M$ containing a neatly embedded ray $r$ and a smooth closed tubular neighborhood $\tau r$ (left), and \hbox{$M'=M\cup(\tn{closed collar})$} (right).}
	\label{fig:morse2}
\end{figure}
Let $C$ be the boundary component of $M$ containing $\partial r$.
Let $M'$ equal $M$ union a closed collar on $C$. The closed collar contains an $(n+1)$-disk $B$ such that $B\cup\tau r$ is a smooth closed regular neighborhood $\nu r$ of $r$ contained in the interior of $M'$ as in Figure~\ref{fig:morse2} (right).
Evidently
\begin{align*}
\ci{\ast}{M', \nu r}{R} &\cong \ci{\ast}{M, \tau r}{R}\\
\ci{\ast}{M'}{R} &\cong \ci{\ast}{M}{R}
\end{align*}
We define the ray-fundamental classes $\br{M,r}^*_e$ and $\br{r}^*_e$
for $r\subset M$ to be the images under these isomorphisms of the ray-fundamental classes for $r\subset M'$.
\item The existence of the nonzero absolute ray-fundamental class $\br{r}^*_e$
implies that if $M$ is a smooth, oriented, connected, noncompact manifold
of dimension \hbox{$n+1\geq 2$} with compact boundary, then $\rci{\ast}{M}{R}$ is nonzero.
In particular, $R$ injects into $\ci{n}{M}{R}$.
For such a manifold $M$, $\ci{n}{M}{R}$ may indeed be the only
nonzero cohomology group in $\rci{\ast}{M}{R}$.
Consider the basic example of euclidean space.
\[
\rci{\ast}{\R^{n+1}}{R} \cong \rc{\ast}{S^n}{R} \cong \cohom{n}{S^n}{R} \cong R
\]
If $M$ has noncompact boundary, then $\rci{\ast}{M}{R}$ may vanish.
Consider the basic example of closed upper half-space $\R^{n+1}_+$ which is proper homotopy equivalent to a ray.
\[
\rci{\ast}{\R^{n+1}_+}{R} \cong \rci{\ast}{[0,\infty)}{R} \cong 0
\]
\end{enumerate}
\end{remarks}

\begin{example}\label{stringer_example}
We will compute the absolute ray-fundamental class determined by a neat straight ray in a stringer.
Fix a smooth, closed, connected, oriented $n$-manifold $Z$ where $n\geq 1$.
Let $\Delta\subset Z$ be a smoothly embedded $n$-disk, and let $z_0\in\Int{\Delta}$.
So, $r=[0,\infty)\times\cpa{z_0}$ is a neat straight ray in the stringer 
$[0,\infty)\times Z$ with smooth closed tubular neighborhood $F=[0,\infty)\times \Delta$.
We let $M_j=[j,\infty) \times Z$ and reuse the notation from Figure~\ref{fig:manifold_notation} and thereafter.\\

We have the following diagram in integer homology.
\begin{equation}\label{Xhomology}\begin{split}
\xymatrix@R=0pt{
     H_{n-1}\pa{\partial\widehat{Z}} & \ar[l]_-{\partial_*}^-{\cong}
		H_{n}\pa{\widehat{Z},\partial\widehat{Z}} \ar[r]^-{\tn{exc.}}_-{\cong} &
		H_{n}\pa{Z,\Delta}&
		\ar[l]_-{\tn{l.e.}}^-{\cong} H_{n}\pa{Z}\\
    \br{\partial\widehat{Z}} & \ar@{|-{>}}[l]  \br{\widehat{Z},\partial\widehat{Z}} \ar@{|-{>}}[r] &
		\br{Z,\Delta} & \ar@{|-{>}}[l] \br{Z}
    }
\end{split}
\end{equation}
Each of these groups is a copy of $\Z$. We claim that the preferred generators map as shown.
It is well-known that $\partial_*$ is an isomorphism here.
Seemingly less well-known is the more explicit fact that $\partial_*\pa{\br{\widehat{Z},\partial\widehat{Z}}}=\br{\partial\widehat{Z}}$ for fundamental classes and the outward normal first orientation convention; a proof appears in Kreck~\cite[Thm.~8.1]{kreck}.
A moment of reflection reveals that the second and third isomorphisms in~\eqref{Xhomology} send the preferred generators to the same generator, denoted $\br{Z,\Delta}$, of $H_{n}\pa{Z,\Delta}$ as claimed.\\

The Universal Coefficients Theorem now yields the following since all relevant Ext groups vanish.
\begin{equation}\label{Zcohomology}\begin{split}
\xymatrix@R=0pt{
     H^{n-1}\pa{\partial\widehat{Z}}  \ar[r]^-{\delta}_-{\cong} &
		H^{n}\pa{\widehat{Z},\partial\widehat{Z}} & \ar[l]_-{\tn{exc.}}^-{\cong}
		H^{n}\pa{Z,\Delta} 
		\ar[r]^-{\tn{l.e.}}_-{\cong} & H^{n}\pa{Z}\\
    \br{\partial\widehat{Z}}^*  \ar@{|-{>}}[r] & \br{\widehat{Z},\partial\widehat{Z}}^* & \ar@{|-{>}}[l] 
		\br{Z,\Delta}^*  \ar@{|-{>}}[r] & \br{Z}^*
    }
\end{split}
\end{equation}
Diagram~\eqref{Zcohomology} is canonically isomorphic to row $j=0$ in~\eqref{eq:defchareltsj}
by the obvious strong deformation retractions.
The latter diagram is canonically isomorphic 
to the direct limit diagram~\eqref{eq:defcharelts}
since every vertical map in $\mathcal{D}$ is an isomorphism.
Making the canonical identifications
\[
\ci{n}{[0,\infty)\times Z}{R} \cong H^n\pa{[0,\infty)\times Z;R} \cong H^n\pa{Z;R} \cong R
\]
with the last given by $\br{Z}^* \mapsto 1$, we have that $\br{r}^*_e=1\in R$.
This completes our example.
\end{example}

Let $(M,r)$ and $(N,s)$ be end-sum pairs (see $\S$~\ref{ssec:csi}) where $M$ and $N$ have the same dimension $n+1\geq 2$ and have compact (possibly empty) boundaries.
Let $\nu r\subset \Int{M}$ and $\nu s\subset\Int{N}$ be smooth closed regular neighborhoods of $r$ and $s$ respectively.
Let $H \subset (M,r)\csi (N,s)$ denote the image of $\partial \nu r$ (which also equals the image of $\partial \nu s$).
Let $u\subset H$ be an unknotted ray, and let $\nu u$ be a smooth closed regular neighborhood of $u$ in the interior of $S:=(M,r)\csi (N,s)$.

\begin{theorem}[H.~King]\label{thm:King}
There are isomorphisms of graded $R$-algebras
\begin{align*}
\ci{\ast}{S, \nu u}{R}    
&\cong (\ci{\ast}{M,\nu r}{R} \oplus \ci{\ast}{N, \nu s}{R})/\fg{\pa{\br{M,r}^*_e, -\br{N,s}^*_e}}\\
\rci{\ast}{S}{R}
&\cong (\rci{\ast}{M}{R} \oplus \rci{\ast}{N}{R})/\fg{\pa{\br{r}^*_e, -\br{s}^*_e}}
\end{align*}
where $\fg{\pa{\br{M,r}^*_e, -\br{N,s}^*_e}}$ and $\fg{\pa{\br{r}^*_e, -\br{s}^*_e}}$
are homogeneous principal ideals of degree $n$.
\end{theorem}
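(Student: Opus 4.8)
The plan is to mimic the Mayer--Vietoris-type argument used for ordinary connected sum, but carried out at the level of end-cohomology and assembled via the direct-limit machinery of Section~\ref{sec:conventions}. First I would choose a decomposition of $S=(M,r)\csi(N,s)$ as an excisive triad. Writing $\widehat{M}=M-\Int\nu r$ and $\widehat{N}=N-\Int\nu s$, the end-sum $S$ is the union of $\widehat{M}$ and $\widehat{N}$ glued along $H\approx\R^n$; after the standard closed-collar fix (as in the remark following Corollary~\ref{sum_pairs}), I get an excisive triad with intersection a regular neighborhood of $H$ that deformation retracts to $H$, and I would arrange the unknotted ray $u\subset H$ and its regular neighborhood $\nu u$ to sit inside this intersection. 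Applying Corollary~\ref{sum_pairs} to this triad (relative to $\nu u$) yields a graded $R$-algebra isomorphism
\[
\ci{\ast}{S,\nu u}{R}\;\cong\;\ci{\ast}{\widehat{M},\nu u}{R}\oplus_{\ci{\ast}{H,\nu u}{R}}\ci{\ast}{\widehat{N},\nu u}{R},
\]
a fibered product over the restriction maps to $\ci{\ast}{H,\nu u}{R}$. Since $H\approx\R^n$ and $\nu u\subset H$ is a regular neighborhood of an unknotted ray, $H$ deformation retracts properly onto $\nu u$, so $\ci{\ast}{H,\nu u}{R}\cong 0$; hence the fibered product is just the quotient of $\ci{\ast}{\widehat{M},\nu u}{R}\oplus\ci{\ast}{\widehat{N},\nu u}{R}$ by the ideal generated by the ``difference of restrictions to $H$.'' Wait---that is not quite right, since with vanishing $\ci{\ast}{H,\nu u}{R}$ the fibered product is the full direct sum; the quotient must come from comparing $\nu u$ to the two ray neighborhoods $\nu r,\nu s$. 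So the real content is a change-of-pair argument.

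The key step is therefore to identify $\ci{\ast}{S,\nu u}{R}$ with $\ci{\ast}{S,\nu r\cup\nu s}{R}$-type data and to compute $\ci{\ast}{\widehat{M},\nu u}{R}$ in terms of $\ci{\ast}{M,\nu r}{R}$. Here I would run the long exact sequence (Corollary~\ref{letriple}) of the closed triple $(\widehat{M},\;(\text{nbhd of }H\text{ in }\widehat{M}),\;\nu u)$, together with excision (Lemma~\ref{excision}), to relate $\ci{\ast}{\widehat{M},\nu u}{R}$ to $\ci{\ast}{M,\nu r}{R}$ via the coboundary map hitting $\rci{n-1}{\partial\nu r}{R}\cong R$ that appears in the construction of the ray-fundamental classes in diagram~\eqref{eq:defcharelts}. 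Concretely: $\widehat{M}$ is $M$ with a half-space neighborhood of $r$ removed and a half-space neighborhood of the complementary ray $u$ glued on along $H$; comparing $\ci{\ast}{\widehat{M},\nu u}{R}$ with $\ci{\ast}{M,\nu r}{R}$ produces exactly one extra relation in degree $n$, governed by $\br{M,r}^*_e$, because the class $\gamma_M=\br{\partial\widehat{Z}_j}^*_e\in\rci{n-1}{\partial\nu r}{R}$ is killed in $\ci{n}{\widehat{M},\nu u}{R}$ (the ray $u$ caps it off) but maps to $\delta_M(\gamma_M)$, the image of $\br{M,r}^*_e$, in $\ci{n}{\widehat{M},\partial\nu r}{R}$. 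Doing this symmetrically for $N$ and then gluing over $H$ (whose relative end-cohomology vanishes, so no further relations and no further generators) gives
\[
\ci{\ast}{S,\nu u}{R}\;\cong\;(\ci{\ast}{M,\nu r}{R}\oplus\ci{\ast}{N,\nu s}{R})/\fg{(\br{M,r}^*_e,-\br{N,s}^*_e)},
\]
with the sign coming from the orientation-reversing gluing diffeomorphism $f$ (so the two copies of $\gamma$ on $H$ get identified up to sign), and the ideal is principal and homogeneous of degree $n$ because it is generated by a single element living in $\ci{n}{M,\nu r}{R}\oplus\ci{n}{N,\nu s}{R}$. That the ideal is a two-sided (indeed, since the ring is graded-commutative, genuine) ideal and that the quotient carries the coordinatewise product is automatic from $\ci{n}{H,\nu u}{R}=0$ annihilating everything. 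The absolute statement then follows by applying the isomorphisms $\psi_M,\psi_N,\psi_S$ from the long exact sequences of the pairs $(M,\nu r)$, $(N,\nu s)$, $(S,\nu u)$---each of $\nu r,\nu s,\nu u$ being properly contractible, so $\rci{\ast}{S}{R}\cong\ci{\ast}{S,\nu u}{R}$ and similarly for the summands---carrying $\br{M,r}^*_e\mapsto\br{r}^*_e$ and $\br{N,s}^*_e\mapsto\br{s}^*_e$ by the very definition in diagram~\eqref{eq:defcharelts}.

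The main obstacle I anticipate is the bookkeeping in the change-of-pair step: making precise, at the finite stage of the diagram $\mathcal{D}$ and compatibly with the direct limit, the claim that replacing the removed neighborhood $\nu r$ by the added neighborhood $\nu u$ introduces exactly the relation $\br{M,r}^*_e=0$ and nothing else---in particular verifying that no new cohomology in other degrees is created and that the identification is a \emph{ring} map, not merely a module map. I would handle this by choosing a single Morse function on $S$ restricting to Morse functions of the type in Lemma~\ref{lem:morse} on $M$ and $N$ and to projection on $u$, so that the compact exhaustions of $S$, $M$, $N$, and $H$ are all compatible; then at each finite level the relevant statement is an ordinary Mayer--Vietoris / triple-sequence computation for compact manifold pairs (structurally the classical connected-sum computation recalled before the theorem statement), and passing to the direct limit is exact (Lemma~\ref{longexactclosedpair}) and preserves the ring structure because cup product commutes with the restriction maps defining the direct system. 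The unknotted-ray hypothesis on $u$ is exactly what guarantees $H$ retracts properly onto $\nu u$, so that the ``base'' $\ci{\ast}{H,\nu u}{R}$ of the fibered product vanishes and the answer is a clean quotient of a direct sum rather than a more complicated pullback.
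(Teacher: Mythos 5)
Your argument hinges on a claim that is false, and it is load-bearing. You assert that $H$ properly deformation retracts onto $\nu u$ and hence that $\ci{\ast}{H,\nu u}{R}\cong 0$. But $\nu u$ (and $\nu u\cap H$) is properly homotopy equivalent to a ray, so its reduced end-cohomology vanishes, whereas $H\approx\R^{n}$ has $\rci{n-1}{H}{R}\cong R$; since end-cohomology is a proper homotopy invariant (Corollary~\ref{phiec}), no such proper retraction exists, and the long exact sequence of the pair gives $\ci{k}{H,\nu u}{R}\cong\rci{k}{H}{R}$, which is $R$ in degree $n-1$ and not zero. That nonvanishing class $\omega$ is not a nuisance to be retracted away; it is the whole mechanism of the theorem: the unique degree-$n$ relation is $\delta_S(\omega)$, whose image under the splitting $\ci{\ast}{S,H}{R}\cong\ci{\ast}{M,\nu r}{R}\oplus\ci{\ast}{N,\nu s}{R}$ (Corollary~\ref{sum_pairs} plus excision, Lemma~\ref{excision}) is $\pa{\br{M,r}^*_e,-\br{N,s}^*_e}$ by naturality of the coboundary and the orientation conventions. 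The false vanishing is what produced both the fibered-product dead end in your first paragraph (note also that Corollary~\ref{sum_pairs} only applies when the relative subspace is $A\cap B$, not $\nu u$) and the assertion that the gluing step contributes ``no further relations and no further generators.''

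Because of this, your bookkeeping of relations comes out wrong. Your intermediate step is fine: $\ci{\ast}{\widehat{M},\nu u\cap\widehat{M}}{R}\cong\rci{\ast}{\widehat{M}}{R}\cong\ci{\ast}{M,\nu r}{R}/\fg{\br{M,r}^*_e}$, and symmetrically for $N$. But if you then assemble the two sides ``with no further generators or relations,'' you obtain the quotient of $\ci{\ast}{M,\nu r}{R}\oplus\ci{\ast}{N,\nu s}{R}$ by the ideal generated by the two elements $\pa{\br{M,r}^*_e,0}$ and $\pa{0,\br{N,s}^*_e}$, not by the single diagonal element $\pa{\br{M,r}^*_e,-\br{N,s}^*_e}$; these algebras are genuinely different (in the paper's main application $\br{r}^*_e$ must survive and equal the image of $\br{s}^*_e$, otherwise the height argument collapses). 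A Mayer--Vietoris assembly of $\rci{\ast}{\widehat{M}}{R}$ and $\rci{\ast}{\widehat{N}}{R}$ over $H$ in fact produces one \emph{new} degree-$n$ class coming from $\delta(\omega)$, i.e.\ an extension you would still have to resolve. The paper's proof avoids this by working relative to $H$ from the start: the long exact sequence of $(S,H)$ together with injectivity of $\delta_S$ (deduced from injectivity of $\delta_M$ via $i_M^*\circ\delta_S=\delta_M\circ\left.i_M\right|^*$) shows that $\rci{\ast}{S}{R}$ is the quotient of $\ci{\ast}{S,H}{R}$ by the single class $\delta_S(\omega)$, and only then splits $\ci{\ast}{S,H}{R}$ as a direct sum and identifies the generator, with the relative and reduced statements exchanged at the end via $\psi_M,\psi_N,\psi_S$ as in~\eqref{eq:defcharelts}. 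Your outline can be repaired by reorganizing it along those lines, but as written the key step rests on a vanishing statement that is false and leads to the wrong quotient.
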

\begin{proof} 
Recall that $S$ is obtained from the disjoint union of $\widehat{M}:=M-\Int{\nu r}$ and $\widehat{N}:=N-\Int{\nu s}$ by identifying $\partial \nu r$ and $\partial\nu s$ using an orientation reversing diffeomorphism.
We have inclusions
\begin{align*}
		i_M \colon & \pa{\widehat{M}, \partial \nu r} \hookrightarrow (S,H)\\
		i_N \colon & \pa{\widehat{N}, \partial \nu s} \hookrightarrow (S,H)
\end{align*}
We orient $H$ so that $\left.i_M\right|:\partial \nu r \to H$ is an orientation preserving diffeomorphism;
it follows that $\left.i_N\right|:\partial \nu s \to H$ is an orientation reversing diffeomorphism.
Let $\omega \in \ci{n-1}{H}{R}\cong R$ be the preferred generator for this orientation.
Hence, the following hold for our preferred generators.
\begin{equation}\label{eq:genssigns}\begin{split}
\xymatrix@R=0pt{
     \rci{n-1}{H}{R} \ar[r]^-{\left.i_M\right|^*}_{\cong} & \rci{n-1}{\partial \nu r}{R} & 
		\rci{n-1}{H}{R} \ar[r]^-{\left.i_N\right|^*}_{\cong} & \rci{n-1}{\partial \nu s}{R}\\
    \omega \ar@{|-{>}}[r] & \gamma_M &
		\omega \ar@{|-{>}}[r] & -\gamma_N
    }
\end{split}
\end{equation}

The long exact sequences for the pairs give isomorphisms
\begin{align*}
		\psi_M : \ci{\ast}{M,\nu r}{R} &\overset{\cong}{\longrightarrow} \rci{\ast}{M}{R}\\
		\psi_N : \ci{\ast}{N,\nu s}{R} &\overset{\cong}{\longrightarrow} \rci{\ast}{N}{R}\\
		\psi_S : \ci{\ast}{S,\nu u}{R} &\overset{\cong}{\longrightarrow} \rci{\ast}{S}{R}
\end{align*}
Equation~\eqref{eq:defcharelts} shows that $\psi_M\pa{\br{M,r}^*_e}=\br{r}^*_e$ and 
$\psi_N\pa{\br{N,s}^*_e}=\br{s}^*_e$.
So, the reduced cohomology result will follow immediately from the relative cohomology result.
Further, the isomorphism $\psi_S$ shows that it suffices to prove the following
\[
\rci{\ast}{S}{R} \cong (\ci{\ast}{M,\nu r}{R} \oplus \ci{\ast}{N, \nu s}{R})/\fg{\pa{\br{M,r}^*_e, -\br{N,s}^*_e}}
\]

Consider the long exact sequence for the pair
\begin{equation}\label{eq:keyLE}
		\longrightarrow \rci{k-1}{H}{R} \stackrel{\delta}\longrightarrow \ci{k}{S,H}{R} \stackrel{j^*}{\longrightarrow} \rci{k}{S}{R} \longrightarrow \rci{k}{H}{R} \longrightarrow
\end{equation}
We claim that $j^*$ is an isomorphism unless $k=n$, in which case $j^*$ is surjective.
As $\rci{k}{H}{R} = 0$ for $k \neq n-1$, the claim is clear except for surjectivity of $j^*$ for $k=n-1$.
By exactness, it suffices to prove that
\[ \delta_S \colon \rci{n-1}{H}{R} \to \ci{n}{S,H}{R}\]
is injective.
The inclusions $i_M$ and $i_N$ together with naturality of the coboundary map imply the following
\begin{equation}\label{eq:comps_key}
i_M^*\circ\delta_S=\delta_M\circ \left.i_M\right|^* \qquad i_N^*\circ\delta_S=\delta_N\circ \left.i_N\right|^*
\end{equation}
Either of these equations imply that $\delta_S$ is injective since both
$\delta_M$ and $\delta_N$ are injective (see~\eqref{eq:defcharelts}) and both
$\left.i_M\right|^*$ and $\left.i_N\right|^*$ are isomorphims.
The claim is proved.\\

The claim implies that $\rci{\ast}{S}{R}$ is isomorphic to the quotient of $\ci{\ast}{S,H}{R}$ by the kernel of $j^*$.
By exactness of~\eqref{eq:keyLE}, this kernel is generated by $\delta_S(\omega)$.\\

By Corollary~\ref{sum_pairs}, the inclusions $i_M$ and $i_N$ induce the isomorphism
\[
		h \colon \ci{\ast}{S,H}{R} \overset{\cong}{\longrightarrow} \ci{\ast}{\widehat{M},\partial \nu r}{R} \oplus \ci{\ast}{\widehat{N},\partial \nu s}{R}
\]
where $h(\alpha)=\pa{i_M^{\ast}(\alpha),i_N^{\ast}(\alpha)}$. We also have the excision isomorphisms
\begin{align*}
		\phi_M: \ci{\ast}{M,\nu r}{R} &\overset{\cong}{\longrightarrow} \ci{\ast}{\widehat{M},\partial \nu r}{R}\\
		\phi_N: \ci{\ast}{N,\nu s}{R} &\overset{\cong}{\longrightarrow} \ci{\ast}{\widehat{N},\partial \nu s}{R}
\end{align*}
Therefore, the theorem will follow provided we show that the image of $\delta_S(\omega)$ under the isomorphism $h$
equals the image of $\pa{\br{M,r}^*_e,-\br{N,s}^*_e}$ under the isomorphism \hbox{$\phi_M \oplus \phi_N$}.
We have
\begin{align*}
h(\delta_S(\omega)) & = (i_M^*(\delta_S(\omega)),i_N^*(\delta_S(\omega)))\\
										& = (\delta_M\circ \left.i_M\right|^*(\omega),  \delta_N\circ \left.i_N\right|^*(\omega))\\
										& = (\delta_M(\gamma_M),\delta_N(-\gamma_N))\\
										& = \pa{\phi_M\pa{\br{M,r}^*_e},\phi_N\pa{-\br{N,s}^*_e}}
\end{align*}
where we used~\eqref{eq:comps_key}, \eqref{eq:genssigns}, and~\eqref{eq:defcharelts}.
This completes our proof of the theorem.
\end{proof}

\begin{remarks}\label{cortoHKthm}
\noindent
\begin{enumerate}[label=(\arabic*),leftmargin=*]\setcounter{enumi}{0}
\item Recall that the number of ends of a space $Y$ equals the rank of $H^0_e\pa{Y;R}$
where $R$ is a principal ideal domain~\cite[Prop.~13.4.11]{geoghegan}.
Thus, the reduced end-cohomology result in Theorem~\ref{thm:King} implies that the number of ends
(finite or infinite) of the binary end-sum $S$ equals the sum of the numbers of ends of $M$ and $N$ minus one.
In particular, if $M$ and $N$ are one-ended, then so is $S$.
\item The results in this section likely hold in the piecewise-linear and topological categories
and also for nonorientable manifolds with $R=\Z_2$.
In this paper, we will not need these generalizations.
\end{enumerate}
\end{remarks}

\section{Ray-Fundamental Classes}
\label{sec:raycharclass}

\subsection{Ray-Fundamental Classes in Ladders}
\label{ssec:fccladder}

Fix $X$ and $Y$ to be closed, connected, oriented $n$-manifolds where $n\geq2$.
Let $\L:=\la{X}{Y}$ be the ladder manifold based on $X$ and $Y$ as defined in Section~\ref{sec:stringersladders}.
Let $r$ be a ray in $\L$
emanating from $x_{0}\in X_{0}$ and intersecting each $S_{j}$ transversely as in Figure~\ref{fig:ladder_ray}.
\begin{figure}[htbp!]
	\centerline{\includegraphics[scale=1]{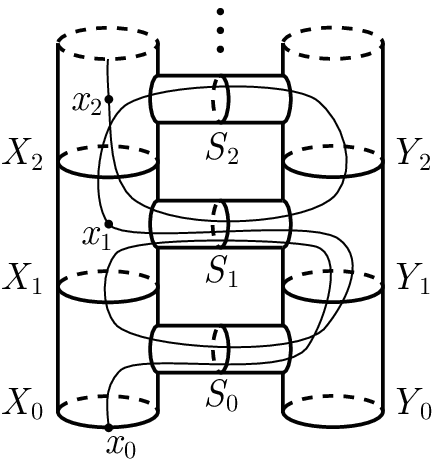}}
	\caption{Ray $r$ in ladder manifold $\L=\la{X}{Y}$.}
	\label{fig:ladder_ray}
\end{figure}
Let $F$ be a smooth closed tubular neighborhood of $r$ with a parameterization $\tau:[0,\infty)\times D^{n}\rightarrow F$ such that
$r=\tau([0,\infty)\times\overline{0})$ and, for each $j$, $F\cap S_{j}=\tau(P_{j}\times D^{n})$,
where $P_{j}$ is the (finite) set of preimages of
points where $r$ intersects $S_{j}$.
If $p\in P_{j}$, then let $D_{p}$ denote
$\tau\left(p\times D^{n}\right)$, and let $D_{0}=\tau\left(0\times D^{n}\right)=F\cap X_{0}$.
Viewing $r$ as a properly embedded oriented
submanifold of $\L$ we may consider the
$\mathbb{Z}$-intersection numbers $\varepsilon_{\mathbb{Z}}\left(r,S_{j}\right)$
(see~\cite[p.~68]{rourkesanderson} or~\cite[p.~112]{gp}). Under this convention,
$p\in P_{j}$ contributes $+1$ to $\varepsilon_{\mathbb{Z}}\left(r,S_{j}\right)$
if $r$ passes from the $X$-side of $\L$ to the $Y$-side on a small neighborhood of $p$ and it
contributes $-1$ if the reverse is true. Equivalently, give $D_{0}$ the
orientation induced by $X_{0}$ and slide that orientation along the product
structure of $F$ to orient each $D_{p}$. Then $p\in P_{j}$ contributes $+1$ to
$\varepsilon_{\mathbb{Z}}\left(r,S_{j}\right)$ if $D_{p}\hookrightarrow S_{j}$
is orientation preserving and $-1$ if $D_{p}\hookrightarrow S_{j}$ is
orientation reversing.\\

Let $\widehat{\L}=\L-F^\circ$ and $\widehat{F}=F-F^\circ=\tau([0,\infty)\times S^{n-1})$,
where $F^\circ$ denotes the topological interior of $F$ as a subspace of $\L$.
Our first goal is to understand the
coboundary map $\delta: H^{n-1}\pa{\widehat{F}}  \to H^{n}\pa{\widehat{\L},\widehat{F}}$.
To accomplish this, we use the familiar diagram
\begin{equation}\label{Lhomology}
\xymatrix@R=0pt{
     H_{n-1}\pa{\widehat{F}} & \ar[l]_-{\partial_*}
		H_{n}\pa{\widehat{\L},\widehat{F}} \ar[r]^-{\tn{exc.}}_-{\cong} &
		H_{n}\pa{\L,F}&
		\ar[l]_-{\tn{l.e.}}^-{\cong} H_{n}\pa{\L}
    }
\end{equation}
and examine the boundary map $\partial_*$.\\

By calculations in Section~\ref{sec:stringersladders},
the fundamental classes $\br{X_{0}}$, $\br{Y_{0}}$, and $\br{S_{j}}$,
$j\in\mathbb{Z}_{\geq0}$, form a free basis for $H_{n}\pa{\L}$.
By the long exact sequence for $\pa{\L,F}$ and excision,
$H_{n}\pa{\widehat{\L},\widehat{F}}$ has a free basis consisting of the relative
fundamental classes $\br{\widehat{X}_0,\partial \widehat{X}_0}$ of $\widehat{X}_{0} := X_{0}-\Int{D_{0}}$
and $\br{\widehat{S}_j,\partial \widehat{S}_j}$ of the $\widehat{S}_{j} := S_{j}-\cup_{p\in P_{j}} \Int{D_{p}}$
together with the fundamental class $\br{Y_0}$ of $Y_{0}$.
The $(n-1)$-sphere $\partial\widehat{X}_0$ is given the boundary orientation;
the fundamental class $\br{\partial\widehat{X}_0}$ 
is our preferred generator of $H_{n-1}\pa{\widehat{F}} \cong\mathbb{Z}$.
(This agrees with our orientation conventions in Section~\ref{sec:cai_csi} where $\partial \widehat{Z}_0$
played the role of $\partial \widehat{X}_0$.)
The orientation conventions established earlier in the current section imply that
$\br{\partial D_p}=-\br{\partial \widehat{X}_0}$ in $H_{n-1}\pa{\widehat{F}}$.\\

Now, $\partial_*:H_{n}\pa{\widehat{\L},\widehat{F}}  \to H_{n-1}\pa{\widehat{F}}$
is determined by its action on this basis.
We have $\partial_*\pa{\br{Y_{0}}}=0$ and
$\partial_*\pa{\br{\widehat{X}_0,\partial \widehat{X}_0}} = \br{\partial\widehat{X}_0}$
(see Example~\ref{stringer_example} above).
For each $j\in\mathbb{Z}_{\geq0}$, we have
\[
\partial_*\pa{\br{\widehat{S}_j,\partial \widehat{S}_j}} = \br{\partial\widehat{S}_j} = \sum_{p\in P_{j}} -\br{\partial D_{p}}
= \varepsilon_{\mathbb{Z}}\pa{r,S_{j}} \cdot \br{\partial\widehat{X}_0} 
\]

We now return to the pertinent coboundary map $\delta$
where we will employ the following diagram.
\begin{equation}\label{eq:boundary}\begin{split}
\xymatrix{
	H^{n-1}\pa{\widehat{F}} \ar[r]^-{\delta} \ar[d]^{h'}_{\cong} & H^{n}\pa{\widehat{\L},\widehat{F}} \ar[d]^{h}_{\cong}\\
	\hom{\Z}{H_{n-1}\pa{\widehat{F}}}{\Z} \ar[r]^-{\partial^{\ast}}  & \hom{\Z}{H_{n}\pa{\widehat{\L},\widehat{F}}}{\Z}
}
\end{split}
\end{equation}
Here $h'$ and $h$ are the surjective homomorphisms provided by
Universal Coefficients, and commutativity is verified in~\cite[p.~200]{hatcher}.
Injectivity of $h'$ and $h$ requires some specifics of the
situation at hand, but both are immediate by Universal Coefficients when
$H_{n-2}\pa{\widehat{F}}$ and $H_{n-1}\pa{\widehat{\L},\widehat{F}}$
are torsion free. That is clearly the case for $H_{n-2}\pa{\widehat{F}}$.
Next, excision and the long exact sequence
for $(\L,F)$ imply that
$H_{n-1}\pa{\widehat{\L},\widehat{F}} \cong H_{n-1}\pa{\L}$.
By calculations in Section~\ref{sec:stringersladders}, the latter is isomorphic to
$H_{n-1}\pa{X_{0}} \oplus H_{n-1}\pa{Y_{0}}$.
By Poincar\'{e} duality, $H_{n-1}\pa{X_{0}} \cong H^{1}\pa{X_{0}}$ and similarly for $Y_0$.
By Universal Coefficients, degree one $\Z$-cohomology is always torsion-free and our assertion follows.\\

The Universal Coefficients Theorem gives the following diagram dual to~\eqref{Lhomology} since all relevant Ext groups vanish.
\begin{equation}\label{Lcohomology}
\xymatrix@R=0pt{
     H^{n-1}\pa{\widehat{F}}  \ar[r]^-{\delta} &
		H^{n}\pa{\widehat{\L},\widehat{F}} & \ar[l]_-{\tn{exc.}}^-{\cong}
		H^{n}\pa{\L,F} 
		\ar[r]^-{\tn{l.e.}}_-{\cong} & H^{n}\pa{\L}
    }
\end{equation}
As in Section~\ref{sec:stringersladders}, we identify $H^{n}\pa{\L}$ with $\Z\oplus \Z[[x]]\oplus \Z$
where the dual fundamental class $\br{X_0}^*$ corresponds to the positive generator of the first summand,
$\br{S_j}^*$ corresponds to the monomial $x^j$, and $\br{Y_0}^*$ corresponds to the positive generator in the third summand.
We also identify $H^{n-1}\pa{\widehat{F}}$ with $\Z$ by $\br{\partial\widehat{X}_0}^* \leftrightarrow 1$.
Thus, the composite map $H^{n-1}\pa{\widehat{F}} \to H^{n}\pa{\L}$ may be written as
\[
\mu : \Z \to \Z \oplus \Z[[x]] \oplus \Z
\]
Define $\varepsilon_{i}=\varepsilon_{\Z}\pa{r,S_{i}}$.
With these conventions, diagram~\eqref{eq:boundary} and our description of $\partial_*$ imply that
$\mu(1) = \pa{1,\sum_{i=0}^{\infty} \varepsilon_{i}x^{i},0}$.\\

By the end of Section~\ref{ssec:cohominfty}, we have the canonical surjection
\[
q : H^{n}\pa{\L} \twoheadrightarrow H_{e}^{n}\pa{\L} 
\cong \pa{\Z \oplus \Z[[x]]\oplus \Z}/K
\]
By Remarks~\ref{chareltremarks}\ref{reprayclass}, the following is now immediate.

\begin{proposition}\label{prop:charclassinfty}
Let $r$ be a ray in $\L$ emanating from $x_{0}\in X_{0}$ and
intersecting each $S_{i}$ transversely, and let $\varepsilon_{i}=\varepsilon_{\Z}\pa{r,S_{i}}$.
Then, the absolute ray-fundamental class determined by $r$ is
\[
\textstyle \br{r}^*_e=\ecl{\pa{1,\sum_{i=0}^{\infty} \varepsilon_{i}x^{i},0}} \in \pa{\Z \oplus \Z[[x]]\oplus \Z}/K \cong
H_{e}^{n}\pa{\L}
\]
\end{proposition}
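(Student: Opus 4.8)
The plan is to exhibit a concrete representative of $\br{r}^*_e$ living in the ordinary cohomology $H^n(\L)$, read off its coordinates from computations already carried out above, and then push it to the quotient $H^n_e(\L)\cong(\Z\oplus\Z[[x]]\oplus\Z)/K$ via the canonical surjection $q$. Everything hinges on Remarks~\ref{chareltremarks}\ref{reprayclass}, which guarantees that the class produced by the defining diagram~\eqref{eq:defchareltsj} at any finite level represents $\br{r}^*_e$ in the direct limit.

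First I would note that the ray $r$, emanating from $x_0\in X_0\subset\partial\L$, is neatly embedded, so the ray-fundamental classes here are those defined by the collaring recipe of Remarks~\ref{chareltremarks}(3); the standard collaring fix is harmless and I would leave it to the reader. The key structural point is that diagram~\eqref{Lcohomology} is, up to the evident strong deformation retractions (exactly as in Example~\ref{stringer_example}), the $j=0$ row of the defining diagram~\eqref{eq:defchareltsj}, with $\partial\widehat{X}_0$ here playing the role of $\partial\widehat{Z}_0$ there. Hence the image of the preferred generator $\br{\partial\widehat{X}_0}^*$ --- equivalently $1\in\Z$ under $H^{n-1}(\widehat{F})\cong\Z$ --- under the composite $\mu$ of~\eqref{Lcohomology} is a representative of $\br{r}^*_e$ sitting in $H^n(\L)$.

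It then remains to quote the computation already in hand: diagram~\eqref{eq:boundary}, together with the description of $\partial_*$ on the free basis $\cpa{\br{\widehat{X}_0,\partial\widehat{X}_0},\br{\widehat{S}_j,\partial\widehat{S}_j},\br{Y_0}}$ of $H_n(\widehat{\L},\widehat{F})$, gives $\mu(1)=\pa{1,\sum_{i=0}^{\infty}\varepsilon_i x^i,0}$ under the identification $H^n(\L)\cong\Z\oplus\Z[[x]]\oplus\Z$. Applying $q$ and invoking Remarks~\ref{chareltremarks}\ref{reprayclass} then yields $\br{r}^*_e=q(\mu(1))=\ecl{\pa{1,\sum_{i=0}^{\infty}\varepsilon_i x^i,0}}$, as claimed.

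I do not expect a genuine obstacle here: the substantive work --- the behavior of $\partial_*$ on the relevant relative fundamental classes, the torsion-freeness needed to apply Universal Coefficients in~\eqref{eq:boundary}, and the identification of $H^n_e(\L)$ from Section~\ref{sec:stringersladders} --- is all carried out before the statement. The one place deserving a sentence of care is the bookkeeping in the second step: verifying that the unreduced diagram~\eqref{Lcohomology} over the whole ladder computes the same class in $H^n_e(\L)$ as the finite-level diagram~\eqref{eq:defchareltsj} does, which reduces to the cofinality and independence-of-exhaustion facts of Section~\ref{ssec:cohominfty} applied to the canonical map $H^n(\L)\to H^n_e(\L)$.
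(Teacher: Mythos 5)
Your proposal is correct and follows essentially the same route as the paper: the paper's proof of Proposition~\ref{prop:charclassinfty} consists precisely of the preceding computation of $\partial_*$ on the basis of $H_n\pa{\widehat{\L},\widehat{F}}$, the Universal Coefficients diagram~\eqref{eq:boundary} giving $\mu(1)=\pa{1,\sum_{i=0}^{\infty}\varepsilon_i x^i,0}$, the canonical surjection $q$, and the citation of Remarks~\ref{chareltremarks}\ref{reprayclass} to conclude that this level-zero class represents $\br{r}^*_e$. Your added remarks on the neat-ray collaring (Remarks~\ref{chareltremarks}(3)) and on identifying~\eqref{Lcohomology} with the $j=0$ row of~\eqref{eq:defchareltsj} are exactly the points the paper handles implicitly, so there is no gap.
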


Next, we prove a simple realization theorem whose proof is reminiscent of a
Mazur-Eilenberg infinite swindle.

\begin{proposition}\label{prop:realizeclass}
If $\alpha=\sum_{i=0}^{\infty} a_i x^i \in \Z[[x]]$, then there exists a ray $r$ in $\L$ emanating from
$x_{0}\in X_{0}$ such that $\br{r}^*_e=\ec{\pa{1,\alpha,0}}$.
\end{proposition}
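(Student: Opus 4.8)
The plan is to reduce everything to Proposition~\ref{prop:charclassinfty}: since that result computes $\br{r}^*_e=\ecl{\pa{1,\sum_i\varepsilon_{\Z}(r,S_i)\,x^i,0}}$ for any ray $r$ emanating from $x_0\in X_0$ and meeting each $S_i$ transversely, it suffices to build such an $r$ with $\varepsilon_{\Z}(r,S_i)=a_i$ for every $i$. Put $s_i:=a_0+a_1+\cdots+a_i$ for $i\geq0$ and $s_{-1}:=0$; these partial sums are the data that will drive the construction, and the fact that they need not stabilize is exactly what forces an infinite process (the ``swindle'').

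First I would take the straight ray $r_0$ from $x_0\in X_0$ running up the $X$-stringer and missing every rung, so that $\varepsilon_{\Z}(r_0,S_i)=0$ for all $i$. I would then produce $r$ by replacing $r_0$, in the (compact) band of heights roughly $[i,i+1]$, with a block of $|s_i|$ consecutive ``round-trip excursions,'' each of which crosses rung $i$ once, runs up the opposite stringer, crosses rung $i+1$ once, and returns down; the block is run in the direction dictated by the sign of $s_i$, and consecutive blocks are spliced together by straight segments up the $X$-stringer (which cross no rung). Because each excursion meets exactly the two spheres $S_i$ and $S_{i+1}$, once each and with opposite signs, and because it is confined to its height band, the whole construction can be carried out transversely and in general position: the dimension count $\dim\L=n+1\geq3$ leaves ample room to keep the resulting arc embedded, and since the block near level $i$ lives in a bounded set of heights the arc leaves every compact set, so $r$ is a proper ray emanating from $x_0$.

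It then remains to do the bookkeeping. The block near level $i$ contributes $s_i$ to $\varepsilon_{\Z}(r,S_i)$ through its crossings of rung $i$ and $-s_i$ to $\varepsilon_{\Z}(r,S_{i+1})$ through its crossings of rung $i+1$; the block near level $i-1$ contributes $-s_{i-1}$ to $\varepsilon_{\Z}(r,S_i)$; and no other block meets $S_i$. Hence $\varepsilon_{\Z}(r,S_i)=s_i-s_{i-1}=a_i$ for every $i\geq0$ (with $s_{-1}=0$ handling $S_0$), and Proposition~\ref{prop:charclassinfty} gives $\br{r}^*_e=\ecl{\pa{1,\alpha,0}}$ as desired.

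I expect the crux to be the middle paragraph: verifying that infinitely many excursions can be inserted while $r$ remains a single smooth proper embedding transverse to all the $S_i$. The two facts that make this work — each block is localized in a bounded height band, so properness is automatic, and the general-position inequality $n+1\geq3$, so the finitely many arcs in each band together with the splicing segments can be disentangled — are routine, but they are the real content; the intersection-number computation is then just a telescoping sum.
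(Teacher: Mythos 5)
Your proposal is correct and is essentially the paper's own argument: both build the ray from pieces localized near height $i$ that cross $S_i$ with total multiplicity $s_i=a_0+\cdots+a_i$ and $S_{i+1}$ with multiplicity $-s_i$ (the paper's path $r_i$ circling the rungs of $\mathbb{L}_{[i,i+2]}$ is exactly your block of $|s_i|$ excursions), so the intersection numbers telescope to $\varepsilon_{\Z}(r,S_i)=a_i$ and Proposition~\ref{prop:charclassinfty} finishes the proof. The smoothing/general-position and properness points you flag are handled in the paper by the same brief remark ("adjusted, if necessary, to make it a smooth embedding"), so nothing is missing.
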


\begin{proof}
Recall the definition of $\mathbb{L}_{[j,k]}\subseteq \L$ in Section~\ref{sec:stringersladders}.
Let $x_{0}=\pa{0,x} \in X_{0}$ be our usual basepoint, and for each
$i\in\Z_{>0}$ choose $x_{i}=\pa{i+1/2,x} \in\mathbb{L}_{[i,i+1]}$ as in Figure~\ref{fig:ladder_ray}.
Let $r_{0}:[0,1]\rightarrow\mathbb{L}_{[0,2]}$ be a
smooth oriented path beginning at $x_{0}$, ending at $x_{1}$, and circling
through the rungs of $\mathbb{L}_{[0,2]}$ so as to realize intersection
numbers $\varepsilon_{\mathbb{Z}}\left(  r_{0},S_{0}\right)  =a_{0}$ and
$\varepsilon_{\mathbb{Z}}\left(  r_{0},S_{1}\right)  =-a_{0}$. 
With respect
to Figure~\ref{fig:ladder_ray}, this path will circle counterclockwise if $a_{0}>0$ and
clockwise if $a_0<0$; if $a_{0}=0$, then it is a vertical arc.\\

Similarly, let $r_{1}:\left[  1,2\right]  \rightarrow\mathbb{L}_{[1,3]}$ be a
path beginning at $x_{1}$, ending at $x_{2}$, and circling through the rungs
of $\mathbb{L}_{[1,3]}$ so as to realize intersection numbers $\varepsilon_{\mathbb{Z}}\left(  r_{1},S_{1}\right)  =a_{0}+a_{1}$
and $\varepsilon_{\mathbb{Z}}\left(  r_{0},S_{2}\right)  =-(a_{0}+a_{1})$. Notice that
$\varepsilon_{\mathbb{Z}}\left(  r_{0}\cup r_{1},S_{0}\right)  =a_{0}$ and
$\varepsilon_{\mathbb{Z}}\left(  r_{0}\cup r_{1},S_{1}\right)  =-a_{0}+(a_{0}+a_{1})=a_{1}$.\\

In general, choose $r_{k}:\left[  k,k+1\right]  \rightarrow\mathbb{L}_{[k,k+2]}$
beginning at $x_{k}$, ending at $x_{k+1}$, and realizing
intersection numbers $\varepsilon_{\mathbb{Z}}\left(  r_{k},S_{k}\right)  = \sum_{i=0}^k a_i$ and
$\varepsilon_{\mathbb{Z}}\left(  r_{k},S_{k+1}\right)  =-\sum_{i=0}^{k} a_i$.
Then, let $r:[0,\infty)\rightarrow \L$ be
the union of these paths, adjusted, if necessary, to make it a smooth
embedding. Choosing a nice smooth closed tubular neighborhood of $r$ and applying the proof
of Proposition~\ref{prop:charclassinfty} complete the proof.
\end{proof}

\subsection{Ray-Fundamental Classes in Surgered Stringers}
\label{ssec:fccss}

The above propositions for ladders have simpler analogues for surgered stringers.
Fix a closed oriented $n$-manifold $X$ where $n\geq 2$.
Let $\SS:=\sst{X}$ be the surgered stringer based on $X$ as defined in Section~\ref{sec:stringersladders}.
Let $r$ be a ray in $\SS$
emanating from $x_{0}\in X_{0}$ and intersecting each $S_{j}$ transversely.
Recall the definition of $\SS_{\br{j,k}}$ from Section~\ref{sec:stringersladders}.
Working as we did in ladder manifolds, we
consider the $\Z$-intersection numbers $\varepsilon_{\Z}\pa{r,S_{j}}$.
A point of $r\cap S_{j}$ at which $r$ exits $\mathbb{S}_{\br{j,j+1/2}}$
contributes $+1$, and a point where $r$ enters $\mathbb{S}_{\br{j,j+1/2}}$ contributes $-1$.\\

As before, let $F$ be a smooth closed tubular neighborhood of $r$ chosen so there
exists a parameterization $\tau:[0,\infty)\times D^{n}\rightarrow F$ with
$r=\tau([0,\infty)\times\overline{0})$ and, for each $j$, $F\cap S_{j}=\tau(P_{j}\times D^{n})$,
where $P_{j}$ is the set of preimages of $r\cap S_{j}$. 
Let $D_{0}=F\cap X_{0}$ and for each $p\in P_{j}$ let $D_{p}=\tau(p\times D^{n})\subseteq S_{j}$.
Let $\widehat{\SS}=\SS - F^\circ$; $\widehat{F}=F-F^\circ$;
$\widehat{X}_{0}:= X_{0}-\Int{D_{0}}$; and $\widehat{S}_{j}:= S_{j}-\cup_{p\in P_{j}}\Int{D_{p}}$.
Using calculations from Section~\ref{sec:stringersladders}, the long exact sequence for $\pa{\SS,F}$, excision,
and notation as above for ladders,
the relative fundamental classes
$\br{\widehat{X}_0,\partial \widehat{X}_0}$ and $\br{\widehat{S}_j,\partial \widehat{S}_j}$, $j\in\Z_{\geq 0}$,
form a free basis for
$H_{n}\pa{\widehat{\SS},\widehat{F}}$.
Our preferred generator of $H_{n-1}\pa{\widehat{F}}\cong\Z$ is $\br{\partial\widehat{X}_0}$,
and $\br{\partial D_p}=-\br{\partial \widehat{X}_0}$ in $H_{n-1}\pa{\widehat{F}}$.\\

The map $\partial_*:H_{n}\pa{\widehat{\SS},\widehat{F}}  \to H_{n-1}\pa{\widehat{F}}$
is given by $\br{\widehat{X}_0,\partial \widehat{X}_0} \mapsto \br{\partial\widehat{X}_0}$
and
$\br{\widehat{S}_j,\partial \widehat{S}_j} \mapsto \varepsilon_{\mathbb{Z}}\pa{r,S_{j}} \cdot \br{\partial\widehat{X}_0}$.
The Universal Coefficients Theorem gives the following diagram.
\begin{equation}\label{Scohomology}
\xymatrix@R=0pt{
     H^{n-1}\pa{\widehat{F}}  \ar[r]^-{\delta} &
		H^{n}\pa{\widehat{\SS},\widehat{F}} & \ar[l]_-{\tn{exc.}}^-{\cong}
		H^{n}\pa{\SS,F} 
		\ar[r]^-{\tn{l.e.}}_-{\cong} & H^{n}\pa{\SS}
    }
\end{equation}
Identify $H^{n}\pa{\SS}$ with $\Z\oplus \Z[[x]]$
by $\br{X_0}^* \leftrightarrow (1,0)$ and
$\br{S_j}^* \leftrightarrow (0,x^j)$.
Identify $H^{n-1}\pa{\widehat{F}}$ with $\Z$ by $\br{\partial\widehat{X}_0}^* \leftrightarrow 1$.
The composite map $H^{n-1}\pa{\widehat{F}} \to H^{n}\pa{\SS}$ is now written as
$ \mu : \Z \to \Z \oplus \Z[[x]] $.
Define $\varepsilon_{i}=\varepsilon_{\Z}\pa{r,S_{i}}$.
Then, $\mu(1)=\pa{1,\sum_{i=0}^{\infty} \varepsilon_i x^i}$.
We have the canonical surjection
\[
q : H^{n}\pa{\SS} \twoheadrightarrow H_{e}^{n}\pa{\SS} 
\cong \Z \oplus \Z[[x]]/\Z[x]
\]
Our work yields the following.

\begin{proposition}\label{charclassss}
Let $r$ be a ray in $\SS$ emanating from $x_{0}\in X_{0}$ and
intersecting each $S_{i}$ transversely, and let $\varepsilon_{i}=\varepsilon_{\Z}\pa{r,S_{i}}$.
Then, the absolute ray-fundamental class determined by $r$ is
\[
\textstyle \br{r}^*_e=\pa{1,\ecl{\sum_{i=0}^{\infty} \varepsilon_{i}x^{i}}} \in \Z \oplus \Z[[x]]/\Z[x] \cong
H_{e}^{n}\pa{\SS}
\]
Furthermore, if $\alpha=\sum_{i=0}^{\infty} a_i x^i \in \Z[[x]]$, then there exists a ray $r$ in $\SS$ emanating from
$x_{0}\in X_{0}$ such that $\br{r}^*_e=\pa{1,\ec{\alpha}}$.
\end{proposition}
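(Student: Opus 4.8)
The plan is to read off both assertions from the machinery assembled in this subsection, in exact parallel with the ladder results, Propositions~\ref{prop:charclassinfty} and~\ref{prop:realizeclass}; the surgered-stringer case is the easier of the two, with a single stringer and no ``$Y$-side'' bookkeeping. For the first assertion, observe that $r$ starts at $x_0\in X_0=\partial\SS$, so it is a neatly embedded ray and $\br{r}^*_e\in H^n_e\pa{\SS}$ is the class furnished by the neat-ray convention of Remarks~\ref{chareltremarks}(3), computed from the pair $(\SS,F)$ with $F$ the chosen product tubular neighborhood of $r$. By Remarks~\ref{chareltremarks}\ref{reprayclass}, the bottom-level representative $\br{r}^*_0\in H^n\pa{\SS}$ represents $\br{r}^*_e$; and by its very definition $\br{r}^*_0$ is the image of the preferred generator $\br{\partial\widehat{X}_0}^*\leftrightarrow1$ of $H^{n-1}\pa{\widehat{F}}\cong\Z$ under the composite $\mu\colon\Z\to H^n\pa{\SS}$ appearing in~\eqref{Scohomology} (coboundary, then excision, then the long exact sequence of $(\SS,F)$). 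This composite has already been evaluated just above the proposition: dualizing the formula $\partial_*\br{\widehat{S}_j,\partial\widehat{S}_j}=\varepsilon_{\Z}\pa{r,S_j}\cdot\br{\partial\widehat{X}_0}$ via Universal Coefficients gives $\mu(1)=\pa{1,\sum_{i=0}^{\infty}\varepsilon_i x^i}$ under the identification $H^n\pa{\SS}\cong\Z\oplus\Z[[x]]$. Pushing forward along the canonical surjection $q\colon H^n\pa{\SS}\twoheadrightarrow H^n_e\pa{\SS}\cong\Z\oplus\Z[[x]]/\Z[x]$, whose kernel is $0\oplus\Z[x]$, yields $\br{r}^*_e=\pa{1,\ecl{\sum_{i=0}^{\infty}\varepsilon_i x^i}}$, which is the first claim.

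For the realization statement I would rerun the Mazur-Eilenberg swindle used to prove Proposition~\ref{prop:realizeclass}. Keep the basepoint $x_0\in X_0$, and for each $i\in\Z_{>0}$ set $x_i=\pa{i+1/2,x}\in\SS_{\br{i,i+1}}$. For each $k\in\Z_{\geq 0}$ choose a smooth oriented arc $r_k\colon[k,k+1]\to\SS_{\br{k,k+2}}$ from $x_k$ to $x_{k+1}$, circling through the rungs of $\SS_{\br{k,k+2}}$ so as to realize $\varepsilon_{\Z}\pa{r_k,S_k}=\sum_{i=0}^{k}a_i$ and $\varepsilon_{\Z}\pa{r_k,S_{k+1}}=-\sum_{i=0}^{k}a_i$, the net sense of circling being dictated by the sign of that number (a vertical arc if it is zero). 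Let $r=\bigcup_k r_k$, adjusted to a smooth proper embedding transverse to every $S_i$ and equipped with a product tubular neighborhood as in the hypotheses of the first part. Then each $S_j$ is met only by $r_{j-1}$ and $r_j$, so the intermediate intersection numbers telescope: $\varepsilon_{\Z}\pa{r,S_j}=-\sum_{i=0}^{j-1}a_i+\sum_{i=0}^{j}a_i=a_j$ for $j\geq1$, while $\varepsilon_{\Z}\pa{r,S_0}=a_0$. Hence $\sum_i\varepsilon_i x^i=\alpha$, and the first part gives $\br{r}^*_e=\pa{1,\ec{\alpha}}$.

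I expect the only substantive point to be the geometric realization inside the swindle: verifying that a ray can be wound through the rungs of $\SS$ to produce any prescribed net $\Z$-intersection number with each $S_j$ without disturbing the others, and that $\bigcup_k r_k$ smooths to a proper embedding carrying the required product tubular neighborhood. This is, however, precisely the mechanism already used in the proof of Proposition~\ref{prop:realizeclass}, and it is strictly easier here: since $\SS$ consists of a single stringer, one can even localize each winding loop near a single rung and arrange $\varepsilon_{\Z}\pa{r,S_j}=a_j$ directly, so the telescoping is a convenience rather than a necessity. Everything else is the routine diagram chase already carried out in the text preceding the proposition.
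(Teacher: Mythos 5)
Your proposal is correct and follows the paper's own derivation: the first assertion is exactly the paper's computation (the composite $\mu$ from~\eqref{Scohomology} evaluated via the description of $\partial_*$, pushed to $H^n_e\pa{\SS}$ by $q$, with Remarks~\ref{chareltremarks}\ref{reprayclass} identifying the bottom-level representative), and your telescoping realization is a valid construction. The only difference is cosmetic: the paper's point, which you also note in your closing paragraph, is that in $\SS$ each rung can be wound independently so that $\varepsilon_{\Z}\pa{r,S_j}=a_j$ is arranged directly and no Mazur--Eilenberg swindle is needed.
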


The proof of the realization result in this proposition is simpler than that for ladder manifolds.
No ``swindle'' is needed here.

\section{Proof of the Main Theorem}
\label{sec:proofmainthm}

We first prove the Main Theorem using specific one-ended, open $4$-manifolds.
Then, we describe various ways of adapting the proof to other one-ended, open manifolds.
Let $T^k=\times_k S^1$ be the $k$-torus.
Define
\begin{align*}
M &= \sst{T^3} \cup_{\partial} (T^2 \times D^2)\\
N &= ([0,\infty) \times (S^1\times S^2)) \cup_{\partial} (S^1\times D^3)
\end{align*} 
So, $M$ is the surgered stringer based on $T^3$ capped with $T^2 \times D^2$, and
$N$ is the stringer based on $S^1\times S^2$ capped with $S^1\times D^3$ as in Figure~\ref{fig:main_examples}.
\begin{figure}[htbp!]
	\centerline{\includegraphics[scale=1]{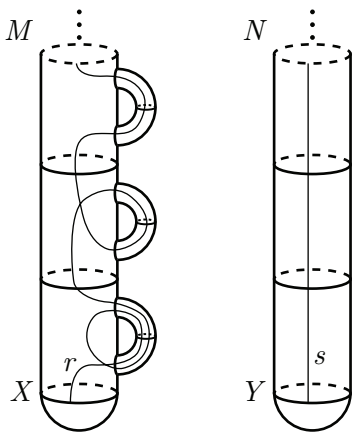}}
	\caption{Open manifolds $M$ and $N$ with rays $r\subset\Int{M}$ and $s\subset\Int{N}$.}
	\label{fig:main_examples}
\end{figure}

Let $\alpha = \sum_{i=0}^{\infty} a_i x^i \in\Z[[x]]$.
By Proposition~\ref{charclassss}, there is a ray $r\subset\Int{M}$ such that
\[
\br{r}^*_e=\pa{1,\ec{\alpha}}\in\Z \oplus \Z[[x]]/\Z[x] \cong\zrci{3}{M}
\]
As $N$ is one-ended, collared at infinity, and has dimension at least four, it contains a unique ray up to ambient isotopy.
So, let $s\subset\Int{N}$ be a straight ray as in Figure~\ref{fig:main_examples}.
By Example~\ref{stringer_example}, we have
\[
\br{s}^*_e = 1 \in \Z \cong\zrci{3}{N}
\]
Let $S=(M,r)\csi(N,s)$.
By Section~\ref{sec:stringersladders} and Theorem~\ref{thm:King}, we have $\zrci{*}{S} \cong \mathcal{A}$ where
\[
	\zrci{k}{S} \cong \mathcal{A}_k :=
	\begin{cases}
	\begin{aligned}
		((\Z &\oplus \Z[[x]]/\Z[x])  \oplus \Z)/I &&\tn{if $k = 3$,}\\
		\Z^3 											 &\oplus 0 \oplus \Z    &&\tn{if $k = 2$,}\\
		\Z^3											 &\oplus \Z[[\tau]]/\Z[\tau] \oplus \Z    &&\tn{if $k = 1$,}\\
		0 												 &\oplus  0 \oplus 0    &&\tn{otherwise}
	\end{aligned}
	\end{cases}
\]
Here, $I$ is the homogeneous ideal of degree $3$ generated by $\pa{\pa{1,\ec{\alpha}},-1}$.\\

Next, let $\mathcal{C}$ be an arbitrary graded $\Z$-algebra (possibly non-unital). We assign a sequence to $\mathcal{C}$ by the following procedure:
\begin{enumerate}[label=(\arabic*)]\setcounter{enumi}{0}
\item Let $J\leq \mathcal{C}_1$ be the subgroup of elements in $\mathcal{C}_1$ that are sent to $0$ by every element of the dual $\hom{\Z}{\mathcal{C}_1}{\Z}$ of $\mathcal{C}_1$. If $J$ is not a two-sided ideal of $\mathcal{C}$, then
return the empty sequence $()$ and end the procedure.
Otherwise, $J$ is a two-sided homogeneous ideal of $\mathcal{C}$.
\item Let $\mathcal{D}=\mathcal{C}/J$, which is a graded $\Z$-algebra.
\item In $\mathcal{D}$, let $U\leq\mathcal{D}_3$ be the subgroup generated by all products of three elements of degree one.
\item In $\mathcal{D}$, let $V \leq \mathcal{D}_3$ be the subgroup generated by elements that are a product of a degree one element and a degree two element but are not a product of three degree one elements.
\item If $V$ is not infinite cyclic, then return the empty sequence $()$ and end the procedure.
Otherwise, let $v$ be either generator of $V$.
\item Let $\pi:\mathcal{D}_3\to\mathcal{D}_3/U$ be the canonical homomorphism.
\item Return the height of $\pi(v)$ in $\mathcal{D}_3/U$ and end the procedure.
\end{enumerate}
The notion of the height of an element in an abelian group is reviewed below in Appendix~\ref{app:height}.
Let $h\pa{\mathcal{C}}$ denote the sequence (empty or infinite in length) determined by the procedure.

\begin{proposition}
If $\mathcal{C}$ and $\mathcal{C'}$ are isomorphic as graded $\Z$-algebras, then
$h\pa{\mathcal{C}}=h\pa{\mathcal{C'}}$.
Applied to the specific case of the end-cohomology algebra of the end-sum $S$ described above,
this yields $h\pa{\zrci{\ast}{S}}=h\pa{\mathcal{A}}$ which equals the height of $\ec{\alpha}\in\Z[[x]]/\Z[x]$.
\end{proposition}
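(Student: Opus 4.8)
The plan is to prove the invariance statement first, then carry out the explicit computation for $\mathcal{A}$.

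For the invariance, an isomorphism $f\colon\mathcal{C}\to\mathcal{C}'$ of graded $\Z$-algebras restricts to a group isomorphism $\mathcal{C}_k\to\mathcal{C}'_k$ in each degree and preserves products. I would walk through the seven steps of the procedure in order, checking at each stage that $f$ carries the object just constructed to its primed counterpart. In step (1), the subgroup $J$ is defined purely algebraically: $J=\bigcap_{\lambda\in\hom{\Z}{\mathcal{C}_1}{\Z}}\ker\lambda$, so $f(J)=J'$ because $f|_{\mathcal{C}_1}$ is an isomorphism and precomposition with $f^{-1}$ is a bijection on duals; consequently the condition ``$J$ is a two-sided ideal'' is satisfied for $\mathcal{C}$ iff it is for $\mathcal{C}'$, so the procedure either aborts for both or continues for both. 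In step (2), $f$ descends to an isomorphism $\bar f\colon\mathcal{D}\to\mathcal{D}'$ of graded $\Z$-algebras. In steps (3) and (4), $\bar f$ carries $U$ to $U'$ and $V$ to $V'$ since both are defined by polynomial expressions in degree-one and degree-two elements; in particular ``$V$ infinite cyclic'' is preserved, so again the procedure aborts for both or neither. In step (5) a choice of generator $v$ is made, but $\bar f(v)$ is a generator of $V'$, and the two choices of generator differ by a sign; since height is insensitive to sign (indeed to any automorphism of the ambient group), this ambiguity is harmless. In steps (6)–(7), $\bar f$ induces an isomorphism $\mathcal{D}_3/U\to\mathcal{D}'_3/U'$ taking $\pi(v)$ to $\pi'(\bar f(v))$, and any group isomorphism preserves heights (this is the elementary fact recalled in Appendix~\ref{app:height}). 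Hence $h(\mathcal{C})=h(\mathcal{C}')$.

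For the computation, I apply the procedure to $\mathcal{C}=\mathcal{A}$. In degree one, $\mathcal{A}_1=\Z^3\oplus\Z[[\tau]]/\Z[\tau]\oplus\Z$, and by Corollary~\ref{dualpowermodpoly} the subgroup killed by every homomorphism to $\Z$ is exactly $J=0\oplus\Z[[\tau]]/\Z[\tau]\oplus 0$, which is a two-sided homogeneous ideal of $\mathcal{A}$ since the cup product vanishes on the middle $\Z[[\tau]]/\Z[\tau]$ coordinate. So step (1) does not abort and $\mathcal{D}=\mathcal{A}/J$ has degree-one part $\Z^3\oplus 0\oplus\Z$ (the cohomology of $T^3$ in the first factor, of $S^1$ in the last), degree-two part $\Z^3\oplus 0\oplus\Z$, and degree-three part $\mathcal{D}_3=((\Z\oplus\Z[[x]]/\Z[x])\oplus\Z)/I$ with $I$ the principal ideal generated by $((1,\ec{\alpha}),-1)$. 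Now I need to identify $U$ and $V$ inside $\mathcal{D}_3$. The key structural input is that the cup product in $\mathcal{D}$ is coordinatewise, being that of $H^*(T^3)$ on the first coordinate and that of $H^*(S^1\times S^2)$ on the last. Products of three degree-one elements live in $H^3(T^3)\oplus 0\oplus H^3(S^1\times S^2)$; since $H^3(S^1\times S^2)=0$, the triple products generate the image of $(\Z\oplus 0)\oplus 0$ in $\mathcal{D}_3$, so $U=(\Z\oplus 0\oplus 0)+I$ — a rank-one subgroup of $\mathcal{D}_3$ by Remark~\ref{topdclm}. Products of a degree-one and a degree-two element additionally include classes in $0\oplus 0\oplus H^3(S^1\times S^2)$ coming from $H^1(S^1)\cup H^2(S^1\times S^2)$, which is nonzero; the elements that are such products but not triple products generate $V=(0\oplus 0\oplus\Z)+I$ modulo $U$-issues — and I must check $V$ is infinite cyclic. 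Since $(\Z\oplus 0\oplus\Z)+I$ is free of rank two (Remark~\ref{topdclm}) with $U$ a rank-one summand-like subgroup, $V$ is infinite cyclic with generator $v=\ec{(0,0,1)}$.

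The remaining and most delicate point — the one I expect to be the main obstacle — is computing the height of $\pi(v)$ in $\mathcal{D}_3/U$ and showing it equals the height of $\ec{\alpha}$ in $\Z[[x]]/\Z[x]$. Here I would invoke Corollary~\ref{oldconj}, which supplies an explicit $\Z$-module isomorphism $f\colon\Z\oplus\Z[[x]]/\Z[x]\to(\Z\oplus\Z[[x]]\oplus\Z)/K$ and, crucially, records that $f^{-1}$ sends $\ec{(r,\sum a_ix^i,s)}$ to $(r+s,\ec{\sum(s+\sum_{j\le i}a_j)x^i})$. Under the composite isomorphism $\mathcal{D}_3\cong(\Z\oplus\Z[[x]]\oplus\Z)/I$, I would transport everything to the model $(\Z\oplus\Z[[x]]/\Z[x])\oplus\Z$ modulo the ideal generated by $((1,\ec{\alpha}),-1)$; in that model one computes directly that $U$ maps to the subgroup generated by $(1,\ec{0})$ and that $v=\ec{(0,0,1)}$ maps to (the class of) $(1,\ec{1/(1-x)})$ or, after reducing modulo the relation that identifies $(1,\ec{0})$-type classes, to a representative whose $\Z[[x]]/\Z[x]$-component is exactly $\ec{\alpha}$. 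Thus $\mathcal{D}_3/U$ becomes (isomorphic to) $\Z[[x]]/\Z[x]$ and $\pi(v)$ becomes $\ec{\alpha}$, so its height is the height of $\ec{\alpha}$. The bookkeeping of signs, of which generator is chosen at step (5), and of the precise effect of quotienting by $U$ versus by the full ideal $I$ is where the care is needed; I would organize it as a single commutative diagram of $\Z$-module isomorphisms built from Corollary~\ref{oldconj} and the identifications in Remark~\ref{topdclm}, leaving the purely algebraic verification to the reader.
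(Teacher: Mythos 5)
Your invariance half follows the paper's own argument essentially step for step: $J$ is the intersection of the kernels of all functionals on $\mathcal{C}_1$, so $f(J)=J'$, the abort conditions transfer, the induced isomorphism of $\mathcal{D}$ with $\mathcal{D}'$ carries $U,V$ to $U',V'$ and $v$ to $\pm v'$, and height is insensitive to isomorphism and sign (Corollary~\ref{heightiso}). Your identification of $J=0\oplus\Z[[\tau]]/\Z[\tau]\oplus 0$ via Corollary~\ref{dualpowermodpoly}, of $\mathcal{D}$, and of $U$ and $V$ from the cup-product structure of $T^3$ and $S^1\times S^2$ is also the paper's computation.

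The gap is in the final step, which you yourself single out as the main obstacle and then defer. Corollary~\ref{oldconj} is not the relevant tool: it identifies $\Z\oplus\Z[[x]]/\Z[x]$ with $(\Z\oplus\Z[[x]]\oplus\Z)/K$, i.e.\ it compares a surgered stringer with the ladder $\la{X}{S^n}$, whereas here $\mathcal{D}_3=((\Z\oplus\Z[[x]]/\Z[x])\oplus\Z)/I$ with $I=\fg{((1,\ec{\alpha}),-1)}$ is a different quotient; $I$ involves $\alpha$ and lives in $(\Z\oplus\Z[[x]]/\Z[x])\oplus\Z$, not in $\Z\oplus\Z[[x]]\oplus\Z$, so the asserted ``composite isomorphism $\mathcal{D}_3\cong(\Z\oplus\Z[[x]]\oplus\Z)/I$'' is ill-formed. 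Likewise, the claim that $v=\ec{((0,\ec{0}),1)}$ is sent to the class of $\pa{1,\ecl{\frac{1}{1-x}}}$ is imported from the $K$-model of Corollary~\ref{oldconj} and has nothing to do with the present relation; taken literally it would make the height of $\pi(v)$ independent of $\alpha$ (the content of $1+x+x^2+\cdots$ is $1$), which is exactly what must not happen. What is needed---and what the paper does---is a direct isomorphism $\eta:\mathcal{D}_3\to\Z\oplus\Z[[x]]/\Z[x]$ given by $\ec{((i,\ec{\beta}),j)}\mapsto(i+j,\ec{\beta}+j\ec{\alpha})$, which is well defined precisely because of the generator of $I$; one then checks $\eta(U)=\Z\oplus 0$ and $\eta(v)=\pm\pa{1,\ec{\alpha}}$, so that $\mathcal{D}_3/U\cong\Z[[x]]/\Z[x]$ carries $\pi(v)$ to $\pm\ec{\alpha}$, and Corollary~\ref{heightiso} concludes. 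Your hedged alternative (``after reducing modulo the relation\dots the component is exactly $\ec{\alpha}$'') gestures at this, but the equality of the height of $\pi(v)$ with that of $\ec{\alpha}$ is precisely the point left unproved, and the commutative diagram you propose to build from Corollary~\ref{oldconj} would not supply it.
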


\begin{proof}
Let $f:\mathcal{C}\to\mathcal{C'}$ be a graded $\Z$-algebra isomorphism.
Note that $f$ respects gradings, products, sums, and the $\Z$-module structure.
In particular, $f$ restricts to an isomorphism $\left.f\right|:\mathcal{C}_1\to\mathcal{C}'_1$,
and the latter induces the isomorphism of dual modules
$\hom{\Z}{\mathcal{C}'_1}{\Z}\to\hom{\Z}{\mathcal{C}_1}{\Z}$ given by $\psi\mapsto\psi\circ \left.f\right|$.
It follows that $f(J)=J'$.
If $J$ is not a two-sided ideal of $\mathcal{C}$, then $J'$ is not a two-sided ideal of $\mathcal{C}'$ and 
we have $h\pa{\mathcal{C}}=()=h\pa{\mathcal{C'}}$.
Otherwise, both $J$ and $J'$ are two-sided homogeneous ideals and
$f$ induces the graded $\Z$-algebra isomorphism
$F:\mathcal{D}\to\mathcal{D}'$ given by $F(x+J)=f(x)+J'$.
As $F$ respects gradings, products, sums, and the $\Z$-module structure, we get that
$F$ restricts to isomorphisms $\mathcal{D}_3\to\mathcal{D}'_3$, $U\to U'$, and $V\to V'$.
So, if $V$ is not infinite cyclic, then neither is $V'$ and we have $h\pa{\mathcal{C}}=()=h\pa{\mathcal{C'}}$.
Otherwise, $V$ and $V'$ are both infinite cyclic and $F(v)=\pm v'$.
The isomorphism $F$ induces the isomorphism
$G:\mathcal{D}_3/U \to \mathcal{D}'_3/U'$ given by $G(x+U)=F(x)+U'$.
So, the following diagram commutes
\begin{equation}\label{eq:heightsquare}\begin{split}
\xymatrix{
	\mathcal{D}_3 \ar[r]^-{\left.F\right|}_-{\cong} \ar[d]^-{\pi} & \mathcal{D}'_3 \ar[d]^-{\pi'}\\
	\mathcal{D}_3/U \ar[r]^-{G}_-{\cong}  & \mathcal{D}'_3/U'
}
\end{split}
\end{equation}
In particular, $G$ sends $\pi(v)\mapsto\pm\pi'(v')$.
The sequence $h(\mathcal{C})$ is the height of $\pi(v)\in\mathcal{D}_3/U$, and
the sequence $h(\mathcal{C}')$ is the height of $\pi'(v')\in\mathcal{D}'_3/U'$.
These sequences are equal since height is invariant under isomorphism
(see Corollary~\ref{heightiso} in the appendix below) and sign change.
This proves the first claim in the proposition and implies that
$h\pa{\zrci{\ast}{S}}=h\pa{\mathcal{A}}$. It remains to show that
$h\pa{\mathcal{A}}$ equals the height of $\ec{\alpha}\in\Z[[x]]/\Z[x]$.\\

Applying the procedure to $\mathcal{A}$, Corollary~\ref{dualpowermodpoly}
implies that
\[
J=0\oplus \Z[[\tau]]/\Z[\tau] \oplus 0 \leq \mathcal{A}_1
\]
Recall the product structure of $\mathcal{A}$ given in Section~\ref{sec:stringersladders}.
It implies that the product (in either order) of any element of $J$ with any 
element of $\mathcal{A}$ vanishes. So, $J$ is a two-sided homogeneous ideal of $\mathcal{A}$.
Taking the quotient of $\mathcal{A}$ by $J$, we obtain the algebra $\mathcal{D}$ where
\[
\mathcal{D}_k :=
	\begin{cases}
	\begin{aligned}
		((\Z &\oplus \Z[[x]]/\Z[x])  \oplus \Z)/I &&\tn{if $k = 3$,}\\
		\Z^3 											 &\oplus 0 \oplus \Z    &&\tn{if $k = 2$,}\\
		\Z^3											 &\oplus 0 \oplus \Z    &&\tn{if $k = 1$,}\\
		0 												 &\oplus  0 \oplus 0    &&\tn{otherwise}
	\end{aligned}
	\end{cases}
\]
Define $U \leq \mathcal{D}_3$ and $V\leq \mathcal{D}_3$ as in the procedure.
Using the well-known $\Z$-cohomology rings of $T^3$ and $S^1\times S^2$
(see~Hatcher~\cite[p.~216]{hatcher}),
we have $U = ((\Z\oplus 0)\oplus 0)/I\cong\Z$ and $V= ((0\oplus0)\oplus\Z)/I \cong \Z$ 
which are both infinite cyclic.
Let $v$ be either generator of $V$.
Let $\pi:\mathcal{D}_3\to \mathcal{D}_3/U$ be the canonical homomorphism.
Then, $h(\mathcal{A})$ equals the height of $\pi(v)\in\mathcal{D}_3/U$.\\

Conceptually, $\mathcal{A}_3=\mathcal{D}_3$ is obtained from $\Z\oplus\Z[[x]]/\Z[x]$ by summing with $\Z$ and then identifying the new $\Z$ with an infinite cyclic subgroup of
$\Z\oplus\Z[[x]]/\Z[x]$; essentially, this doesn't change the group.
More precisely, consider the homomorphism $\eta : \mathcal{D}_3\to \Z\oplus\Z[[x]]/\Z[x]$
defined by $\ec{((i,\ec{\beta}),j)} \mapsto (i+j,\ec{\beta}+j\ec{\alpha})$.
Noting that $\ec{((i,\ec{\beta}),j)}=\ec{((i+j,\ec{\beta}+j\ec{\alpha}),0)}$,
we see that $\eta$ is an isomorphism of groups.
Observe that $\eta(U)=\Z\oplus0$, and $\eta(V)$ is the infinite cyclic subgroup generated by
$\pa{1,\ec{\alpha}}$.
We have the commutative diagram
\begin{equation}\label{eq:heightsquaretail}\begin{split}
\xymatrix{
	\mathcal{D}_3 \ar[r]^-{\eta}_-{\cong} \ar[d]^-{\pi} & \Z\oplus\Z[[x]]/\Z[x] \ar[d] \ar[dr]\\
	\mathcal{D}_3/U \ar[r]^-{E}_-{\cong}  & (\Z\oplus\Z[[x]]/\Z[x])/(\Z\oplus 0) \ar[r]_-{\cong} & \Z[[x]]/\Z[x]
}
\end{split}
\end{equation}
where $E$ is induced by $\eta$ and the two downward homomorphisms in the triangle are the canonical projections
which induce the horizontal isomorphism.
At the level of elements, we have
\begin{equation}\label{eq:heightsquaretailelt}\begin{split}
\xymatrix{
	v \ar@{|->}[r] \ar@{|->}[d] & \pm\pa{1,\ec{\alpha}} \ar@{|->}[d] \ar@{|->}[dr]\\
	\pi(v) \ar@{|->}[r]  & \ec{\pm\pa{1,\ec{\alpha}}} \ar@{|->}[r] & \pm\ec{\alpha}
}
\end{split}
\end{equation}
As height is invariant under isomorphism and sign change,
the height of $\pi(v)\in\mathcal{D}_3/U$ equals the height of
$\ec{\alpha}\in\Z[[x]]/\Z[x]$ proving the proposition.
\end{proof}

As there exist uncountably many heights of elements in $\Z[[x]]/\Z[x]$ (see Lemma~\ref{heightrealize}), the Main Theorem is proved.
Crucial to our proof was the detection of the specific subgroups $U$ and $V$ in an isomorphically invariant manner.
To enable this, we chose manifolds with useful cup product structures.
In the absence of such cup products,
results at the end of Section~\ref{sec:stringersladders} above show that these subgroups cannot be so detected.\\

We close this section with a sample of variations of our proof of the Main Theorem.
Always, we consider a pair of one-ended, open $m$-manifolds $M$ and $N$.
\noindent
\begin{enumerate}[label=(\arabic*),leftmargin=*]\setcounter{enumi}{0}
\item To prove the Main Theorem for each dimension $m\geq 5$, consider the manifolds
\begin{align*}
M &= \sst{S^2\times S^{m-3}} \cup_{\partial} (S^2 \times D^{m-2})\\
N &= ([0,\infty) \times (S^1\times S^{m-2})) \cup_{\partial} (S^1\times D^{m-1})
\end{align*}
The proof is the same, except we consider the subgroups $U$ and $V$ of
\[
\mathcal{D}_{m-1} \cong ((\Z\oplus \Z[[x]]/\Z[x]) \oplus \Z)/I
\]
where $U = ((\Z\oplus 0)\oplus 0)/I\cong\Z$ is the subgroup generated by elements
that are a product of a degree two element and a degree $m-3$ element
(if $m=5$, then this means the product of two degree two elements),
and $V= ((0\oplus0)\oplus\Z)/I \cong \Z$ is the subgroup generated by elements that are a product of a degree one element
and a degree $m-2$ element.
\item For dimension $m=3$, consider closed, oriented surfaces $\Sigma_{g_1}$ and $\Sigma_{g_2}$
of distinct positive genera $g_1>g_2$ (the case $g_1<g_2$ may be handled similarly). Consider the manifolds
\begin{align*}
M &= \sst{\Sigma_{g_1}} \cup_{\partial} H_{g_1}\\
N &= ([0,\infty) \times \Sigma_{g_2}) \cup_{\partial} H_{g_2}
\end{align*}
where boundaries are capped with handlebodies.
As in the main proof, we let $S=(M,r)\csi(N,s)$ and,
by Section~\ref{sec:lbos} and Theorem~\ref{thm:King}, we have
\[
	\mathcal{D}_k :=
	\begin{cases}
	\begin{aligned}
		((\Z  &\oplus \Z[[x]]/\Z[x])  \oplus \Z)/I &&\tn{if $k = 2$,}\\
		\Z^{2g_1}                  &\oplus 0 \oplus\Z^{2g_2} &&\tn{if $k = 1$,}\\
		0 												 &\oplus  0  \oplus0   &&\tn{otherwise}
	\end{aligned}
	\end{cases}
\]
In this proof, we assume $\ec{\alpha}\ne0$.
So, products of degree one elements form a rank two subgroup of $\mathcal{D}_2$;
products in the first factor generate $U:=((\Z\oplus0)\oplus0)/I$, and
products in the third factor generate $V$ which is the $\Z$-span of $\ec{\pa{\pa{1,\ec{\alpha}},-1}}$.\\

Consider subgroups $C\leq \mathcal{D}_1$ such that: (1) products of elements of $C$ generate a rank one subgroup $D\leq\mathcal{D}_2$, and (2) the product of each element of $C$ with each element of $\mathcal{D}_1$ lies in $D$. Note that $\Z^{2g_1}\oplus0\oplus0$ is such a subgroup.
Among all of these subgroups, consider one $C'$ of maximal rank.
Suppose that $C'$ is not contained in $\Z^{2g_1}\oplus0\oplus0$.
Then, using Lemma~\ref{ranklemma}, we see that $C'$ meets both $\Z^{2g_1}\oplus0\oplus0$ and
$0\oplus0\oplus\Z^{2g_2}$ nontrivially. By Poincar\'{e} duality, elements that are the product of an element of $C'$ and an element of $\mathcal{D}_1$ generate a rank two subgroup of $\mathcal{D}_2$. This contradicts the defining properties of $C'$.
Therefore, $C'$ is contained in $\Z^{2g_1}\oplus0\oplus0$.
Among all such subgroups of maximal rank, $\Z^{2g_1}\oplus0\oplus0$ is maximal with respect to containment. This algebraically distinguishes $\Z^{2g_1}\oplus0\oplus0$ in an isomorphically invariant manner, and, hence, does the same for $U$. The rest of the proof is unchanged.
\item Similarly, one may prove the Main Theorem in each dimension $m\geq3$ using ladder manifolds in place of surgered stringers. Details are left to the interested reader.
\item In all of the above manifolds used to prove the Main Theorem in some dimension $m$,
one may use any compact caps to eliminate boundary and the proof is unchanged.
\end{enumerate}

\appendix

\section{Infinitely Generated Abelian Group Theory}\label{sec:igagt}

We present some relevant results from the theory of infinitely generated abelian groups.
This theory is subtle, beautiful, and (in our experience) not widely known among topologists.
For that reason, we provide proofs, as elementary as possible, for a few foundational results.
These results are then applied to prove a few propositions tailored specifically to our needs in this paper.
We close this appendix with a discussion of \emph{height} in an abelian group.\\

\subsection{Classical Results}

The additive abelian group $\Z[[x]]\cong\Z\times\Z\times\cdots$ is called the \emph{Baer-Specker group}.
Famously, it is not a free $\Z$-module; we include a proof of this fact below (see also Schr\"{o}er~\cite{schroeer}).
The additive abelian group $\Z[x]\cong\Z\oplus\Z\oplus\cdots$ is, of course, a free $\Z$-module with basis $\cpa{1,x,x^2,\ldots}$.
Throughout this and the next appendix, all maps are $\Z$-module homomorphisms.
The \emph{dual module} of a $\Z$-module $M$ is the $\Z$-module
\[
	M^{\ast}:=\hom{\Z}{M}{\Z}
\]

\begin{fact}\label{factpowermodpoly}
If $f:\Z[[x]]\to\Z$ vanishes on $\Z[x]$, then $f=0$.
\end{fact}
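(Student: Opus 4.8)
The plan is to pass to the standard identification of $\Z[[x]]$ with the direct product $\prod_{i\ge 0}\Z$ of additive groups, under which $\Z[x]$ becomes the subgroup of finitely supported sequences, and then to run a divisibility argument of Specker--Baer type. Write a general element as $a=\sum_{i\ge 0}a_ix^i$, and for $n\ge 0$ let $a^{(n)}:=\sum_{i\ge n}a_ix^i$ be its \emph{tail}, so that $a-a^{(n)}=\sum_{0\le i<n}a_ix^i\in\Z[x]$. Consequently, if $f$ vanishes on $\Z[x]$, then $f(a^{(n)})=f(a)$ for every $n\ge 0$. I will fix $a\in\Z[[x]]$ together with an integer $N\ge 2$ and show that $(N-1)\mid f(a)$; letting $N$ vary then forces $f(a)=0$, and since $a$ is arbitrary this gives $f=0$.

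First I would choose any strictly increasing sequence $0=n_0<n_1<n_2<\cdots$ and form the element
\[
u:=\sum_{j\ge 0}N^{j}\,a^{(n_j)}\in\Z[[x]],
\]
which is legitimate because the coefficient of each $x^i$ in $u$ is the \emph{finite} sum $a_i\sum_{\,j:\,n_j\le i}N^{j}$. The crucial point is that, although $f$ need not commute with the infinite sum defining $u$, it does respect, for each $m\ge 1$, the two-term decomposition
\[
u=\Bigl(\sum_{j=0}^{m-1}N^{j}a^{(n_j)}\Bigr)+N^{m}z_m,\qquad z_m:=\sum_{j\ge m}N^{\,j-m}a^{(n_j)}\in\Z[[x]],
\]
where $z_m$ is again a bona fide element of $\Z[[x]]$ by the same coefficientwise argument. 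Applying $f$ to this finite decomposition and using $f(a^{(n_j)})=f(a)$ yields $f(u)=s_m f(a)+N^{m}f(z_m)$ with $s_m:=1+N+\cdots+N^{m-1}=(N^{m}-1)/(N-1)$, so $N^{m}\mid f(u)-s_m f(a)$ for every $m\ge 1$.

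To finish, I would multiply this divisibility by $N-1$ and use $(N-1)s_m=N^{m}-1$, obtaining $N^{m}\mid (N-1)f(u)+f(a)$ for every $m\ge 1$. The quantity $(N-1)f(u)+f(a)$ is a \emph{fixed} integer, depending on the chosen $u$ but not on $m$; since $\bigcap_{m}N^{m}\Z=\{0\}$ for $N\ge 2$, it must vanish, whence $f(a)=-(N-1)f(u)$ and $(N-1)\mid f(a)$. As $N\ge 2$ was arbitrary, $f(a)$ is divisible by every positive integer, so $f(a)=0$; therefore $f=0$.

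The conceptual heart of the argument — inserting the geometric weights $N^{j}$ so that the identity $(N-1)s_m=N^{m}-1$ converts a single divisibility into unbounded divisibility of a fixed integer — is the classical trick and presents no real difficulty. I expect the only thing requiring care to be the bookkeeping: checking that $u$ and each $z_m$ are genuine elements of $\Z[[x]]$ (each coefficient a finite sum), confirming that the displayed splitting of $u$ is an honest finite sum so that $f$ may be applied termwise, and keeping straight which quantities depend on $m$ and which do not.
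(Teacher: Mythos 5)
Your argument is correct, and it is a genuinely different route from the paper's. The paper first shows that $f$ kills every series of the special form $\sum a_ip^ix^i$ (the tail from degree $k$ on is $p^k$ times an element of $\Z[[x]]$, and since $f$ vanishes on polynomials, $f$ of the series equals $f$ of each tail, forcing divisibility by $p^k$ for all $k$); it then handles an arbitrary $\gamma=\sum c_ix^i$ by a coprimality splitting, writing $c_i=a_ip^i+b_iq^i$ for fixed coprime $p,q>1$ and concluding $f(\gamma)=f(\alpha)+f(\beta)=0$. You instead work directly with the given element $a$: you build the single auxiliary series $u=\sum_j N^j a^{(n_j)}$ of geometrically weighted tails, use the finite splitting $u=\sum_{j<m}N^ja^{(n_j)}+N^mz_m$ together with $f(a^{(n_j)})=f(a)$ to get $N^m\mid f(u)-s_mf(a)$, and then the identity $(N-1)s_m=N^m-1$ turns this into $N^m\mid (N-1)f(u)+f(a)$ for all $m$, so the fixed integer $(N-1)f(u)+f(a)$ vanishes and $(N-1)\mid f(a)$ for every $N\geq2$. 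Both proofs rest on the same elementary observation that $f$ of a series equals $f$ of any of its tails, but your version avoids the two-coprime-bases trick entirely at the cost of the auxiliary-element bookkeeping, and it is closer in spirit to the paper's proof of its Fact~\ref{factpowertopoly}, where an auxiliary series with rapidly growing (factorial) weights plays the analogous role; the paper's version, by contrast, isolates a clean intermediate statement (the kernel contains all series whose $i$th coefficient is divisible by $p^i$) that is reusable on its own. The points you flag as needing care (that $u$ and each $z_m$ are genuine elements of $\Z[[x]]$, that the displayed splitting is finite so $f$ applies termwise, and that $u$ does not depend on $m$) all check out.
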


\begin{proof}
Fix an integer $p>1$.
Consider the element
\[
\alpha=a_0p^0x^0+a_1p^1x^1+a_2p^2x^2+\cdots\in\Z[[x]]
\]
where each $a_i\in\Z$. As $f$ vanishes on $\Z[x]$, we get
\[
f(\alpha)=f(a_kp^kx^k+a_{k+1}p^{k+1}x^{k+1}+\cdots)=p^k f(a_kp^0x^k+a_{k+1}p^{1}x^{k+1}+\cdots)
\]
and so $p^k$ divides $f(\alpha)$ for each integer $k>0$. Hence, $f(\alpha)=0$.\\

Next, fix coprime integers $p,q>1$.
Let $\gamma=\sum c_ix^i$ be an arbitrary element of $\Z[[x]]$.
We wish to write $\gamma=\alpha + \beta$ where $\alpha=\sum a_ip^ix^i$ and $\beta=\sum b_iq^ix^i$.
For each $i\geq0$, we seek integers $a_i$ and $b_i$ such that $c_i=a_ip^i+b_iq^i$, which is always possible since $p^i$ and $q^i$ are coprime. Now $f(\gamma)=f(\alpha)+f(\beta)=0+0=0$.
\end{proof}

\begin{corollary}\label{dualpowermodpoly}
$\pa{\Z[[x]]/\Z[x]}^{\ast} = \{0\}$.
\end{corollary}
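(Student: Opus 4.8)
The plan is to deduce this immediately from Fact~\ref{factpowermodpoly} by exploiting the universal property of the quotient module. First I would take an arbitrary element $g \in \pa{\Z[[x]]/\Z[x]}^{\ast}$, that is, a $\Z$-module homomorphism $g : \Z[[x]]/\Z[x] \to \Z$, and let $\pi : \Z[[x]] \to \Z[[x]]/\Z[x]$ denote the canonical projection. Form the composite $f := g \circ \pi : \Z[[x]] \to \Z$. Since $\pi$ annihilates $\Z[x]$ by construction, $f$ vanishes on $\Z[x]$, so Fact~\ref{factpowermodpoly} forces $f = 0$.

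It then remains only to observe that $\pi$ is surjective: given any class in $\Z[[x]]/\Z[x]$, lift it to some $\gamma \in \Z[[x]]$, so that $g$ evaluated on that class equals $f(\gamma) = 0$. Hence $g = 0$, and since $g$ was arbitrary we conclude $\pa{\Z[[x]]/\Z[x]}^{\ast} = \{0\}$. There is no real obstacle here; all the content is already packaged in Fact~\ref{factpowermodpoly}, and this corollary is just the observation that a $\Z$-linear functional on a quotient module is the same data as a $\Z$-linear functional on the ambient module that annihilates the submodule being quotiented out.
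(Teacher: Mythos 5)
Your argument is correct and is essentially identical to the paper's own proof: compose an arbitrary functional with the canonical surjection $\pi$, note the composite kills $\Z[x]$, apply Fact~\ref{factpowermodpoly}, and use surjectivity of $\pi$ to conclude the functional vanishes. Nothing further is needed.
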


\begin{proof}
Let $f:\Z[[x]]/\Z[x]\to\Z$, and let $\pi:\Z[[x]]\twoheadrightarrow\Z[[x]]/\Z[x]$ be the canonical surjection.
So, $f\circ \pi$ vanishes on $\Z[x]$. By Fact~\ref{factpowermodpoly}, $f\circ \pi=0$. As $\pi$ is surjective, $f=0$.
\end{proof}

Recall that $\tn{Hom}_{\Z}(-,\Z)$ distributes over finite direct sums.

\begin{corollary}
The $\Z$-module $\Z[[x]]/\Z[x]$ is uncountable and torsion-free, but does not split off $\Z$ as a direct summand
and is not a free $\Z$-module.
\end{corollary}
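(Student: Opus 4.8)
The plan is to treat the four assertions in turn. First I would establish uncountability and torsion-freeness by elementary counting and divisibility, and then dispatch the two ``negative'' statements by leveraging the vanishing of the dual module $(\Z[[x]]/\Z[x])^\ast$ established in Corollary~\ref{dualpowermodpoly}.

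For uncountability, I would note that the Baer--Specker group $\Z[[x]]\cong\Z^{\N}$ has cardinality $2^{\aleph_0}$ while $\Z[x]$ is countable; were $\Z[[x]]/\Z[x]$ countable, then $\Z[[x]]$ would be a countable union of cosets of $\Z[x]$, each countable, hence countable --- a contradiction. In particular $\Z[[x]]/\Z[x]$ is nonzero. For torsion-freeness, I would suppose $n\ec{\beta}=0$ with $0\neq n\in\Z$, i.e.\ $n\beta\in\Z[x]$; writing $\beta=\sum b_ix^i$, the coefficient $nb_i$ of $n\beta$ vanishes for all large $i$, and since $n\neq0$ this forces $b_i=0$ for all large $i$, so $\beta\in\Z[x]$ and $\ec{\beta}=0$.

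For the remaining two claims I would invoke Corollary~\ref{dualpowermodpoly} together with the fact that $\hom{\Z}{-}{\Z}$ distributes over finite direct sums. If $\Z[[x]]/\Z[x]\cong\Z\oplus A$ for some $\Z$-module $A$, then its dual would be $\Z^\ast\oplus A^\ast\cong\Z\oplus A^\ast\neq0$, contradicting Corollary~\ref{dualpowermodpoly}; hence $\Z[[x]]/\Z[x]$ has no $\Z$ direct summand. Finally, a nonzero free $\Z$-module has rank $\geq1$ and therefore splits off $\Z$; since $\Z[[x]]/\Z[x]$ is nonzero (by the uncountability step) yet has no $\Z$ summand, it cannot be free.

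I do not anticipate a genuine obstacle: Corollary~\ref{dualpowermodpoly} is the only nontrivial ingredient, and the rest is bookkeeping. The one spot that warrants a word of care is the deduction of non-freeness, where one must first use nonzero-ness to guarantee positive rank before extracting a $\Z$ summand from a putative free presentation.
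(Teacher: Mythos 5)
Your proposal is correct and follows essentially the same route as the paper: the first two claims by elementary counting and divisibility arguments (which the paper leaves as exercises), and the last two by noting that either a $\Z$ summand or freeness would force the dual module to be nonzero, contradicting Corollary~\ref{dualpowermodpoly}. Your extra care in routing non-freeness through the no-$\Z$-summand statement (using nonzero-ness to get positive rank) is a fine, slightly more explicit version of the paper's one-line argument.
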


\begin{proof}
The first two claims are simple exercises. For the last two claims, otherwise $\pa{\Z[[x]]/\Z[x]}^{\ast}$ would be nonzero, contradicting Corollary~\ref{dualpowermodpoly}. 
\end{proof}

Intuitively, $\Z[[x]]/\Z[x]$ is flexible and large regarding injective maps of free $\Z$-modules into it, but is rigid regarding maps to free $\Z$-modules.

\begin{corollary}\label{wedgedual}
$\pa{\Z\oplus\Z[[x]]/\Z[x]\oplus\Z}^{\ast}\cong \Z^2$.
\end{corollary}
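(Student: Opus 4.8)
The plan is to reduce the computation entirely to Corollary~\ref{dualpowermodpoly} together with the additivity of the dual functor recalled just above, since $\hom{\Z}{-}{\Z}$ carries a finite direct sum to the direct sum of the duals. This is a purely formal consequence of what has already been proved, so the ``proof'' is really a two-line bookkeeping argument.

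First I would apply the natural isomorphism $\hom{\Z}{A\oplus B\oplus C}{\Z}\cong A^{\ast}\oplus B^{\ast}\oplus C^{\ast}$ with $A=C=\Z$ and $B=\Z[[x]]/\Z[x]$, obtaining
\[
\pa{\Z\oplus\Z[[x]]/\Z[x]\oplus\Z}^{\ast}\cong\Z^{\ast}\oplus\pa{\Z[[x]]/\Z[x]}^{\ast}\oplus\Z^{\ast}.
\]
Then I would note that $\Z^{\ast}\cong\Z$ (send a homomorphism to its value at $1$) and invoke Corollary~\ref{dualpowermodpoly} to see that the middle summand vanishes. What remains is $\Z\oplus 0\oplus\Z\cong\Z^{2}$, which is the claim.

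There is no real obstacle here; all of the substance lives in Fact~\ref{factpowermodpoly} and Corollary~\ref{dualpowermodpoly}, and the only thing to be careful about is keeping track of which summands are being dualized. The point of recording this corollary is its downstream use: it says that the dual of the top-degree end-cohomology of a ladder manifold $\la{X}{Y}$ (with $X$, $Y$ closed, connected, oriented, $n$-dimensional) has rank exactly two, in agreement with Remark~\ref{topdclm}, and this rank-two invariant is precisely what is exploited to recover $g_1+g_2$ from the algebra in the proof of Theorem~\ref{classsurf}.
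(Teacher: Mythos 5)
Your argument is correct and is essentially the paper's proof: the paper recalls that $\tn{Hom}_{\Z}(-,\Z)$ distributes over finite direct sums and then declares the corollary immediate from Corollary~\ref{dualpowermodpoly}, exactly as you do. One correction to your closing remark: this corollary does not give the dual of the top-degree end-cohomology of the ladder manifold $\la{X}{Y}$ (that module is $(\Z\oplus\Z[[\sigma]]\oplus\Z)/K$, whose dual is $\Z$, of rank one, by Corollary~\ref{modKdual} and Remark~\ref{topdclm}); rather it computes the dual for the wedge space $\ws{X}{Y}$, and it is precisely the contrast between rank two and rank one that yields Corollaries~\ref{nonisoduals} and~\ref{spaces_distinct}, while the rank count $2g_1+2g_2$ in Theorem~\ref{classsurf} comes from applying Corollary~\ref{dualpowermodpoly} to the degree-one module.
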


\begin{proof}
Immediate by Corollary~\ref{dualpowermodpoly}.
\end{proof}

Fact~\ref{factpowermodpoly} also implies that two maps $\Z[[x]]\to\Z$ that agree on $\Z[x]$ must be equal (consider their difference).
Combining this observation with projections, we see that two maps $\Z[[x]]\to\Z[[x]]$ that agree on $\Z[x]$ must be equal (see also Fuchs~\cite[Lemma~94.1]{fuchsinfag}).
We mention that $\Z[x]^{\ast}\cong\Z[[x]]$ since $\Z[x]$ is free with $\Z$-basis $\cpa{1,x,x^2,\ldots}$.
The isomorphism is $f\mapsto \sum_{i\geq0} f(x^i) x^i$.
For completeness, we prove the ``dual'' fact that $\Z[[x]]^{\ast}\cong\Z[x]$.\\

The following fact is long known.

\begin{fact}\label{factpowertopoly}
If $f:\Z[[x]]\to\Z$, then $f(x^k)=0$ for cofinitely many $k$.
\end{fact}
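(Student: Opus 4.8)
The plan is to argue by contradiction. Suppose $f(x^k)\neq 0$ for infinitely many $k$, and list these indices as $k_0<k_1<k_2<\cdots$. First I would reduce to a cleaner situation via a ``spreading'' map: the assignment $\iota:\Z[[y]]\to\Z[[x]]$ sending $\sum a_iy^i$ to the power series whose coefficient of $x^{k_i}$ is $a_i$ and whose other coefficients vanish is a well-defined $\Z$-module homomorphism with $\iota(y^i)=x^{k_i}$. Hence $g:=f\circ\iota:\Z[[y]]\to\Z$ satisfies $g(y^i)=f(x^{k_i})\neq 0$ for every $i\geq 0$, and it suffices to show that no homomorphism $g:\Z[[y]]\to\Z$ can be nonzero on every monomial $y^i$.

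To do that I would manufacture a single element $\gamma\in\Z[[y]]$ whose image $g(\gamma)$ is pinned down modulo larger and larger integers by its finite truncations, and then show the truncations cannot stabilize. Concretely, define positive integers $c_0<c_1<\cdots$ recursively by $c_0=1$ and $c_{i+1}=2^{\,i+2}\,|g(y^i)|\,c_i$, so that $c_i\mid c_{i+1}$ (hence $c_i\mid c_j$ for all $j\geq i$) and $c_i|g(y^i)|=c_{i+1}/2^{\,i+2}$. Put $\gamma=\sum_{i\geq 0}c_iy^i$, $T=g(\gamma)$, and for each $n$ write $\gamma=P_n+\gamma'_n$ with $P_n=\sum_{i=0}^nc_iy^i\in\Z[y]$ and $\gamma'_n=\sum_{i\geq n+1}c_iy^i$. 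Since $c_{n+1}$ divides every $c_i$ with $i\geq n+1$, we have $\gamma'_n=c_{n+1}\delta_n$ for some $\delta_n\in\Z[[y]]$, whence $T=g(P_n)+c_{n+1}g(\delta_n)$ and so $c_{n+1}\mid T-g(P_n)$.

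The key estimate is that $g(P_n)$ is small compared with $c_{n+1}$: for $i\leq n$ one has $c_i|g(y^i)|=c_{i+1}/2^{\,i+2}\leq c_{n+1}/2^{\,i+2}$, so $|g(P_n)|\leq\sum_{i=0}^n c_i|g(y^i)|<c_{n+1}\sum_{i\geq 0}2^{-(i+2)}=\tfrac12 c_{n+1}$. Since $c_{n+1}\to\infty$ while $T$ is fixed, for all large $n$ we get $|T-g(P_n)|\leq |T|+\tfrac12 c_{n+1}<c_{n+1}$, which together with $c_{n+1}\mid T-g(P_n)$ forces $g(P_n)=T$ for all large $n$. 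But $g(P_{n+1})-g(P_n)=c_{n+1}g(y^{n+1})\neq 0$, a contradiction; hence $f(x^k)=0$ for all but finitely many $k$.

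The only delicate point is calibrating the growth of the $c_i$: they must grow fast enough (faster than the sizes $|g(y^i)|$ allow, hence the extra geometric factor $2^{\,i+2}$) that the truncation error beats $c_{n+1}$, while the chain of divisibilities $c_i\mid c_{i+1}$ is exactly what lets the infinite tail $\gamma'_n$ be factored out of $g$. The recursion above is tuned so that both happen simultaneously; everything else is routine.
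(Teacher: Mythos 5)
Your proof is correct, and it is essentially the paper's argument in different clothing: both evaluate the homomorphism at a single power series whose coefficients grow rapidly and divide one another (your recursively defined $c_i$, the paper's factorials $a_i!$), factor the tail out as a large multiple so its image is divisible by $c_{n+1}$ (resp.\ $a_{k+1}!$), and bound the head so that divisibility forces vanishing. The only differences are presentational: you argue by contradiction after spreading a subsequence of monomials back to all monomials, and you compare consecutive truncations, whereas the paper argues directly that $f(\beta_{k+1})=0$ eventually and recovers $f(x^k)=0$ by telescoping.
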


\begin{proof}
Consider an element
\[
	\alpha = a_0!+a_1!x+a_2!x^2+\cdots\in\Z[[x]]
\]
for integers $0<a_0<a_1<a_2<\cdots$ to be determined. For each $k$, we have a tail of $\alpha$ denoted
\[
	a_k!\beta_k = a_k!x^k+a_{k+1}!x^{k+1}+\cdots \in\Z[[x]]
\]
where
\[
\beta_k=\frac{a_k!}{a_k!}x^k+\frac{a_{k+1}!}{a_k!}x^{k+1}+\cdots \in\Z[[x]]
\]
Notice that $\beta_k$ lies in $\Z[[x]]$ since the $a_j$'s are increasing.
We have
\[
f(\alpha)=a_0!f(1)+a_1!f(x)+\cdots+a_k!f(x^k)+a_{k+1}!f(\beta_{k+1})\in\Z
\]
Hence
\[
a_{k+1}!f(\beta_{k+1})=f(\alpha)-a_0!f(1)-a_1!f(x)-\cdots-a_k!f(x^k)
\]
By the triangle inequality
\[
a_{k+1}!\card{f(\beta_{k+1})}\leq\card{f(\alpha)}+a_0!\card{f(1)}+a_1!\card{f(x)}+\cdots+a_k!\card{f(x^k)}
\]
Therefore
\[
\card{f(\beta_{k+1})}\leq \frac{\card{f(\alpha)}}{a_{k+1}!}+ \frac{a_0!\card{f(1)}+a_1!\card{f(x)}+\cdots+a_k!\card{f(x^k)}}{a_{k+1}!}
\]
The first term on the right side tends to zero as $k\to\infty$, and we may inductively choose the positive integers $a_j$ so that the second term is less than $1/(k+1)$.
Hence, the nonnegative integers $\card{f(\beta_{k+1})}$ tend to $0$ as $k\to\infty$. So, $f(\beta_{k+1})=0$ for cofinitely many $k$.\\

Now, $a_k!\beta_k-a_{k+1}!\beta_{k+1}=a_k!x^k$ and so $f(x^k)=0$ for cofinitely many $k$, as desired. 
\end{proof}

\begin{corollary}\label{powerdual}
$\Z[[x]]^{\ast}\cong\Z[x]$.
\end{corollary}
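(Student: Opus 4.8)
The plan is to exhibit an explicit isomorphism $\Phi\colon\Z[[x]]^{\ast}\to\Z[x]$ together with its inverse. First I would set $\Phi(f):=\sum_{k\geq0}f(x^k)\,x^k$ for $f\in\hom{\Z}{\Z[[x]]}{\Z}$. The one substantive point is that this formula actually defines a \emph{polynomial}: by Fact~\ref{factpowertopoly}, $f(x^k)=0$ for all but finitely many $k$, so $\Phi(f)\in\Z[x]$. This is exactly the phenomenon that makes the dual of the Baer--Specker group small, and it is the only nontrivial input to the argument. That $\Phi$ is a $\Z$-module homomorphism is then immediate, since for each fixed $k$ the evaluation $f\mapsto f(x^k)$ is $\Z$-linear, and only finitely many such evaluations are involved for any given $f$.

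Next I would check injectivity. Suppose $\Phi(f)=0$, i.e.\ $f(x^k)=0$ for every $k\geq0$. Since $f$ is linear, it then vanishes on all of $\Z[x]$. But Fact~\ref{factpowermodpoly} says any map $\Z[[x]]\to\Z$ vanishing on $\Z[x]$ is the zero map; applying this to $f$ gives $f=0$.

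For surjectivity, given $p(x)=\sum_{k=0}^{n}c_k x^k\in\Z[x]$, I would produce a preimage directly: define $f_p\colon\Z[[x]]\to\Z$ by $f_p\bigl(\sum_{i\geq0}a_i x^i\bigr):=\sum_{k=0}^{n}c_k a_k$. This is a finite $\Z$-linear combination of the coordinate functionals $\sum a_i x^i\mapsto a_k$, each of which is a well-defined element of $\Z[[x]]^{\ast}$, so $f_p\in\Z[[x]]^{\ast}$; and by construction $f_p(x^k)=c_k$ for $k\leq n$ and $f_p(x^k)=0$ for $k>n$, whence $\Phi(f_p)=p$. One can observe along the way that $p\mapsto f_p$ and $\Phi$ are mutually inverse, so $\Phi$ is a $\Z$-module isomorphism, completing the proof. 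I do not anticipate any real obstacle: all the difficulty is already packaged into Fact~\ref{factpowertopoly}, and what remains is a routine verification.
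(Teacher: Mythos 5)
Your proof is correct and takes essentially the same route as the paper: the paper likewise uses Fact~\ref{factpowertopoly} to see that $f(x^k)$ vanishes for cofinitely many $k$ and Fact~\ref{factpowermodpoly} to see that $f$ is determined by its restriction to $\Z[x]$, with the isomorphism given by the same map $f\mapsto\sum_{i\geq0}f(x^i)x^i$. You merely spell out injectivity and surjectivity as separate checks, where the paper packages both into the observation that $f$ coincides with the coordinate functional built from its values on the monomials.
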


\begin{proof}
Let $f:\Z[[x]]\to\Z$. Fact~\ref{factpowertopoly} implies that $f(x^k)=0$ for cofinitely many $k$.
Thus, $g:\Z[[x]]\to\Z$ defined by
\[
g\pa{\sum_{i\geq0} c_i x^i} = \sum_{i\geq0} c_i f(x^i) x^i
\]
is a well-defined $\Z$-module homomorphism.
As $f$ and $g$ agree on $\Z[x]$, we see that $f=g$.
Note that the isomorphism $\Z[[x]]^{\ast}\to\Z[x]$ is $f\mapsto \sum_{i\geq0} f(x^i) x^i$.
\end{proof}

\begin{corollary}
$\Z[[x]]$ is not a free $\Z$-module.
\end{corollary}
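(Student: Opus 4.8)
The plan is to leverage Corollary~\ref{powerdual}, which identifies $\Z[[x]]^{\ast}$ with the countable module $\Z[x]$, against the fact that the dual of an uncountably-generated free module is uncountable. First I would argue by contradiction: assume $\Z[[x]]$ is a free $\Z$-module with basis $B$. Since $\Z[[x]]\cong\prod_{i\geq0}\Z$ has cardinality $2^{\aleph_0}$ (alternatively, it surjects onto the uncountable group $\Z[[x]]/\Z[x]$ of the earlier corollary), and a free $\Z$-module on a countable basis is countable, the index set $B$ must be uncountable.

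Next I would compute the dual. By the universal property of the direct sum, $\Z[[x]]^{\ast}=\hom{\Z}{\bigoplus_{b\in B}\Z}{\Z}\cong\prod_{b\in B}\Z$, since a homomorphism out of a free module is prescribed freely and independently on each basis element. This product contains the subset $\prod_{b\in B}\{0,1\}$, whose cardinality is $2^{\card{B}}$ and is therefore uncountable because $B$ is. Hence $\Z[[x]]^{\ast}$ is uncountable.

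Finally I would invoke Corollary~\ref{powerdual}: $\Z[[x]]^{\ast}\cong\Z[x]$, and $\Z[x]\cong\bigoplus_{i\geq0}\Z$ is countable. This contradicts the previous paragraph, so $\Z[[x]]$ cannot be free.

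There is no genuine obstacle here beyond bookkeeping with cardinalities; the only point requiring a word of care is the implication "$\Z[[x]]$ uncountable $\Rightarrow$ $B$ uncountable," which just uses that a free $\Z$-module on an infinite set $B$ has the same cardinality as $B$. (One could instead give Specker's classical direct argument, but the route through Corollary~\ref{powerdual} is shorter given what has already been established.)
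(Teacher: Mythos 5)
Your argument is correct and is essentially the paper's own proof: the paper likewise assumes a basis exists, notes it must be uncountable (hence $\Z[[x]]^{\ast}$ is uncountable), and contradicts Corollary~\ref{powerdual}. You have simply spelled out the cardinality bookkeeping that the paper leaves implicit.
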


\begin{proof}
Otherwise, there is a $\Z$-basis for $\Z[[x]]$ which is necessarily uncountable and so $\Z[[x]]^{\ast}$ is uncountable.
This contradicts Corollary~\ref{powerdual}.
\end{proof}

\subsection{Applications of Duals to End-Cohomology of Ladder Manifolds}\label{appclm}

We now move to $\Z$-modules and algebras arising from ladder manifolds.
Let $\la{X}{Y}$ be a ladder manifold based on closed, connected, and oriented $n$-manifolds $X$ and $Y$ where $n\geq 2$.
Recall from Section~\ref{sec:stringersladders} that the degree $n$ subgroup of the end-cohomology algebra of $\la{X}{Y}$ is
\[
\zrci{n}{\la{X}{Y}} \cong (\Z \oplus \Z[[\sigma]] \oplus \Z) / K
\]
where
$K = \cpa{\left. \pa{\sum \beta_i, \beta, -\sum \beta_i} \right| \beta = \sum \beta_i \sigma^i \in \Z[\sigma]} \cong \Z[\sigma]$.
Sometimes, we write $x$ instead of $\sigma$ in $K$.
We begin by computing the dual module of $(\Z \oplus \Z[[x]] \oplus \Z) / K$.

\begin{lemma}\label{vanish_lemma}
If $h:\Z[[x]]\to\Z$ vanishes on
\[
L:=\cpa{\left. \beta=\sum \beta_i x^i \in \Z[x] \right| \sum \beta_i=0}
\]
then $h=0$.
\end{lemma}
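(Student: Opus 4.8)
The plan is to reduce the lemma to Fact~\ref{factpowermodpoly}. First I would observe that $L$ is exactly the kernel of the ``evaluation at $1$'' homomorphism $\Z[x]\to\Z$ sending $\sum\beta_i x^i$ to $\sum\beta_i$; in particular $x^i-1\in L$ for every $i\ge 0$. Since $h$ vanishes on $L$, this forces $h(x^i)=h(1)$ for all $i$. Write $c:=h(1)\in\Z$. Once I know $c=0$, the homomorphism $h$ kills every $x^i$, hence vanishes on the free submodule $\Z[x]\subset\Z[[x]]$, and Fact~\ref{factpowermodpoly} gives $h=0$ immediately. So the entire task is to show $c=0$.

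To force $c=0$ I would reuse the ``$p$-adic'' device from the first half of the proof of Fact~\ref{factpowermodpoly}. Fix an integer $p\ge 2$ and consider the power series
\[
\gamma_p \;:=\; \sum_{i\ge 0} p^i x^i \ \in\ \Z[[x]].
\]
For each $k\ge 1$, peeling off the first $k$ terms gives $\gamma_p=\sum_{i=0}^{k-1}p^i x^i+p^k\,\delta_k$ with $\delta_k:=\sum_{j\ge 0}p^j x^{j+k}\in\Z[[x]]$. Applying $h$ and using $h(x^i)=c$ yields $h(\gamma_p)=c\cdot\frac{p^k-1}{p-1}+p^k\,h(\delta_k)$; clearing the denominator shows that the fixed integer $(p-1)h(\gamma_p)+c$ is divisible by $p^k$. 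Since this holds for every $k\ge 1$, that integer must be $0$, so $(p-1)\mid c$. As $p\ge 2$ was arbitrary, $c$ is divisible by every positive integer, whence $c=0$, and the lemma follows from the reduction above.

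The one delicate point is the divisibility bookkeeping in the middle step: keeping track of the factor $p-1$ when one clears the denominator in $c\cdot\frac{p^k-1}{p-1}$, and then correctly concluding from ``divisible by $p^k$ for all $k$'' that the quantity vanishes; everything else is routine. This lemma is exactly the input needed to pin down the dual of $(\Z\oplus\Z[[x]]\oplus\Z)/K$, since a $\Z$-valued homomorphism on that module restricts on the middle $\Z[[x]]$ summand to a map which, by the defining relations of $K$, is forced to vanish on $L$ and hence is zero; this is what makes that dual module isomorphic to $\Z$ as asserted in Remark~\ref{topdclm}.
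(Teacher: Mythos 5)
Your proof is correct. The reduction is sound: since $x^i-1\in L$ for all $i$, you get $h(x^i)=h(1)=c$ for every $i$, and your divisibility computation with $\gamma_p=\sum_{i\ge 0}p^ix^i$ is legitimate because you only peel off finitely many terms before applying $h$, so only finite additivity and $\Z$-linearity are used; the identity $(p-1)h(\gamma_p)+c=p^k\bigl(c+(p-1)h(\delta_k)\bigr)$ holding for all $k$ forces that fixed integer to vanish, giving $(p-1)\mid c$ for every integer $p\ge 2$, hence $c=0$, and Fact~\ref{factpowermodpoly} finishes. The paper's middle step is different: it invokes Fact~\ref{factpowertopoly} to produce a single index $k$ with $h(x^k)=0$, and then, for arbitrary $\alpha=\sum a_ix^i\in\Z[x]$ with $n=\sum a_i$, notes that $\alpha-nx^k\in L$, so $h(\alpha)=n\,h(x^k)=0$; it then also concludes with Fact~\ref{factpowermodpoly}. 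So you avoid Fact~\ref{factpowertopoly} entirely, at the cost of redoing a Specker-style geometric-series divisibility argument of the same flavor as the first half of the proof of Fact~\ref{factpowermodpoly}; the paper's route is shorter given that both Facts are already established in the appendix, while yours is more self-contained if one only has Fact~\ref{factpowermodpoly} available. Your closing observation about how the lemma pins down the dual of $(\Z\oplus\Z[[x]]\oplus\Z)/K$ agrees with the paper's subsequent corollaries.
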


\begin{proof}
By Fact~\ref{factpowertopoly}, there exists $k\geq0$ such that $h(x^k)=0$.
Let $\alpha=\sum a_i x^i$ be an arbitrary element of $\Z[x]$ and let $n=\sum a_i$.
Then, $\alpha-nx^k\in L$ and $0=h(\alpha-nx^k)=h(\alpha)$.
So, $h$ vanishes on $\Z[x]$. By Fact~\ref{factpowermodpoly}, $h=0$.
\end{proof}

\begin{corollary}
If $f:\Z\oplus\Z[[x]]\oplus\Z\to\Z$ vanishes on $K$, then $f=0$ on $0\oplus\Z[[x]]\oplus0$ and $f(r,\gamma,s)=j(r+s)$ for some fixed integer $j$.
\end{corollary}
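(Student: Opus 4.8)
The plan is to peel off the three summands of the domain, use the explicit generators of $K$ to pin down the restriction of $f$ to the middle factor, and then invoke Lemma~\ref{vanish_lemma}. First I would record that every homomorphism $f\colon\Z\oplus\Z[[x]]\oplus\Z\to\Z$ has the form $f(r,\gamma,s)=ar+g(\gamma)+bs$, where $a:=f(1,0,0)$, $b:=f(0,0,1)$, and $g\colon\Z[[x]]\to\Z$ is defined by $g(\gamma):=f(0,\gamma,0)$; this is immediate from additivity applied to the decomposition $(r,\gamma,s)=(r,0,0)+(0,\gamma,0)+(0,0,s)$.

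Next I would feed the generators of $K$ into this formula. For any $\beta=\sum\beta_i x^i\in\Z[x]$ the triple $\pa{\sum\beta_i,\beta,-\sum\beta_i}$ lies in $K$, so the hypothesis that $f$ vanishes on $K$ gives
\[
0=a\sum\beta_i+g(\beta)-b\sum\beta_i,\qquad\text{equivalently}\qquad g(\beta)=(b-a)\sum\beta_i .
\]
Restricting attention to those $\beta$ lying in the submodule $L$ of $\Z[x]$ appearing in Lemma~\ref{vanish_lemma} (the polynomials whose coefficients sum to $0$), we see that $g$ vanishes on $L$; hence Lemma~\ref{vanish_lemma} forces $g\equiv0$ on all of $\Z[[x]]$. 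This is precisely the assertion that $f=0$ on $0\oplus\Z[[x]]\oplus0$.

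Finally, with $g\equiv0$ in hand, the identity $g(\beta)=(b-a)\sum\beta_i$ specialized to $\beta=1$ yields $b-a=0$; setting $j:=a=b$ gives $f(r,\gamma,s)=ar+bs=j(r+s)$, as desired. The argument is essentially bookkeeping with the direct-sum structure and the explicit description of $K$; the one genuine input is Lemma~\ref{vanish_lemma} (which in turn rests on Facts~\ref{factpowermodpoly} and~\ref{factpowertopoly}), so I do not anticipate any real obstacle beyond correctly isolating $g$ and observing that vanishing on $K$ already pins $g$ down on the coefficient-sum-zero polynomials.
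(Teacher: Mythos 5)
Your argument is correct and is essentially the paper's proof: both isolate the restriction $g(\gamma)=f(0,\gamma,0)$ to the middle summand, use the elements of $K$ with coefficient sum zero to see that $g$ vanishes on $L$ and hence (by Lemma~\ref{vanish_lemma}) on all of $\Z[[x]]$, and then use the generator $(1,1,-1)\in K$ to conclude $f(1,0,0)=f(0,0,1)=:j$. The only cosmetic difference is that you package the bookkeeping in the formula $f(r,\gamma,s)=ar+g(\gamma)+bs$ from the outset, which the paper does implicitly at the end.
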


\begin{proof}
Consider the inclusion $i:\Z[[x]]\hookrightarrow\Z\oplus\Z[[x]]\oplus\Z$ given by $i(\gamma)=(0,\gamma,0)$.
The composition $f\circ i: \Z[[x]]\to\Z$ vanishes on $L$. By Lemma~\ref{vanish_lemma}, $f\circ i=0$.
Thus, $f$ vanishes on $0\oplus\Z[[x]]\oplus0$. As $(1,1,-1)\in K$, we get
\[
0=f(1,1,-1)=f(1,0,0)+f(0,1,0)-f(0,0,1)=f(1,0,0)-f(0,0,1)
\]
and $f(1,0,0)=f(0,0,1)$. Define $j:=f(1,0,0)$. Then, $f(r,\gamma,s)=j(r+s)$.
\end{proof}

\begin{corollary}\label{modKdual}
$\pa{(\Z\oplus\Z[[x]]\oplus\Z)/K}^{\ast}\cong \Z$.
\end{corollary}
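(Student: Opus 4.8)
The plan is to reduce the statement to the preceding corollary via the universal property of the quotient. First I would observe that precomposition with the canonical surjection $\pi:\Z\oplus\Z[[x]]\oplus\Z\twoheadrightarrow(\Z\oplus\Z[[x]]\oplus\Z)/K$ identifies $\pa{(\Z\oplus\Z[[x]]\oplus\Z)/K}^{\ast}$ with the submodule of $\pa{\Z\oplus\Z[[x]]\oplus\Z}^{\ast}$ consisting of those homomorphisms that vanish on $K$. This identification is immediate: $\pi$ is surjective, so $g\mapsto g\circ\pi$ is injective, and its image is exactly the functionals killing $\ker\pi=K$; it is manifestly $\Z$-linear.

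Next I would apply the preceding corollary. It tells us that every $f:\Z\oplus\Z[[x]]\oplus\Z\to\Z$ vanishing on $K$ has the form $f(r,\gamma,s)=j(r+s)$ with $j:=f(1,0,0)\in\Z$. For the converse I would check directly that, for any $j\in\Z$, the map $(r,\gamma,s)\mapsto j(r+s)$ does vanish on $K$: a typical element of $K$ is $\pa{\sum\beta_i,\beta,-\sum\beta_i}$, and $\sum\beta_i+\pa{-\sum\beta_i}=0$. Hence $f\mapsto j$ is a bijection between $\{f:f|_K=0\}$ and $\Z$, and it is plainly additive, so it is a $\Z$-module isomorphism.

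Chaining the two identifications gives $\pa{(\Z\oplus\Z[[x]]\oplus\Z)/K}^{\ast}\cong\Z$, as claimed. I expect no genuine obstacle here: the substantive input is the preceding corollary (which in turn rests on Lemma~\ref{vanish_lemma} and Fact~\ref{factpowertopoly}, the statements controlling functionals on $\Z[[x]]$ modulo the polynomial relations). The only point requiring a line of care is the converse direction — verifying that $(r,\gamma,s)\mapsto j(r+s)$ truly annihilates $K$ — so that $f\mapsto j$ is surjective onto $\Z$ rather than merely injective into it.
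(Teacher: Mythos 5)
Your proposal is correct and follows essentially the same route as the paper: precompose with the canonical surjection $\pi$, invoke the preceding corollary to see that any functional killing $K$ has the form $(r,\gamma,s)\mapsto j(r+s)$, and conclude the dual is $\Z$. Your extra check that $(r,\gamma,s)\mapsto j(r+s)$ does annihilate $K$ (so that every $j$ is realized) is a point the paper leaves implicit when it asserts $j=1$ gives a generator, but it is the same argument.
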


\begin{proof}
Let $f:(\Z\oplus\Z[[x]]\oplus\Z)/K \to \Z$,
and let $\pi:\Z\oplus\Z[[x]]\oplus\Z\twoheadrightarrow (\Z\oplus\Z[[x]]\oplus\Z)/K$ be the canonical surjection.
The composition $f\circ \pi$ vanishes on $K$. By the previous corollary, $f\circ \pi(r,\gamma,s)=j(r+s)$ for some fixed integer $j$. As $\pi$ is surjective, $f(\ec{(r,\gamma,s)})=j(r+s)$. In particular, $j=1$ gives a generator for the dual module in question.
\end{proof}

\begin{corollary}\label{nonisoduals}
The uncountable, torsion-free $\Z$-modules $\Z\oplus\Z[[x]]/\Z[x]\oplus\Z$ and $(\Z\oplus\Z[[x]]\oplus\Z)/K$ are not isomorphic.
\end{corollary}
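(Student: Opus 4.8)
The plan is to distinguish the two modules by the invariant $M\mapsto M^{\ast}=\hom{\Z}{M}{\Z}$. First I would record the elementary fact that a $\Z$-module isomorphism $\varphi\colon M\to N$ induces, by precomposition, a $\Z$-module isomorphism $\varphi^{\ast}\colon N^{\ast}\to M^{\ast}$, so that isomorphic $\Z$-modules have isomorphic dual modules. Thus it suffices to show that the duals of the two modules in the statement are non-isomorphic.

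Next I would simply invoke the two dual computations already in hand: Corollary~\ref{wedgedual} gives $\pa{\Z\oplus\Z[[x]]/\Z[x]\oplus\Z}^{\ast}\cong\Z^2$, while Corollary~\ref{modKdual} gives $\pa{(\Z\oplus\Z[[x]]\oplus\Z)/K}^{\ast}\cong\Z$. Since $\Z^2$ is not cyclic and $\Z$ is (equivalently, these free abelian groups have different rank), $\Z^2\not\cong\Z$, and therefore $\Z\oplus\Z[[x]]/\Z[x]\oplus\Z$ and $(\Z\oplus\Z[[x]]\oplus\Z)/K$ are not isomorphic as $\Z$-modules. The uncountability and torsion-freeness asserted in the statement are immediate: $\Z[[x]]/\Z[x]$ is uncountable and torsion-free, hence so is $\Z\oplus\Z[[x]]/\Z[x]\oplus\Z$, and $(\Z\oplus\Z[[x]]\oplus\Z)/K\cong\Z\oplus\Z[[x]]/\Z[x]$ by Corollary~\ref{oldconj}, so it shares these properties.

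I do not expect a genuine obstacle here: the substance was already extracted in Corollaries~\ref{wedgedual} and~\ref{modKdual}, which rest in turn on Facts~\ref{factpowermodpoly} and~\ref{factpowertopoly}. What deserves emphasis, rather than proof, is the content of the corollary. By Corollary~\ref{oldconj} the module $(\Z\oplus\Z[[x]]\oplus\Z)/K$ is already isomorphic to $\Z\oplus\Z[[x]]/\Z[x]$, so the corollary says that adjoining a single free $\Z$ summand to $\Z\oplus\Z[[x]]/\Z[x]$ genuinely changes its isomorphism type. This is invisible to coarse invariants---both modules are uncountable, torsion-free, non-free, and fail to split off $\Z\oplus\Z$---but it is detected precisely by the rank of the dual jumping from $1$ to $2$. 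A direct attempt to rule out an explicit isomorphism would be considerably more painful, which is why routing through the dual module is the right move.
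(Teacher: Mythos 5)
Your proposal is correct and matches the paper's proof: both distinguish the modules by their dual modules, invoking Corollaries~\ref{wedgedual} and~\ref{modKdual} to get $\Z^2$ versus $\Z$. The extra remarks on uncountability, torsion-freeness, and the isomorphism with $\Z\oplus\Z[[x]]/\Z[x]$ are fine but not needed beyond what the paper records.
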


\begin{proof}
They have nonisomorphic duals by Corollaries~\ref{wedgedual} and~\ref{modKdual}.
\end{proof}

As an application of Corollary~\ref{nonisoduals}, consider the space in Figure~\ref{fig:ws}.
\begin{figure}[htbp!]
    \centerline{\includegraphics[scale=1.0]{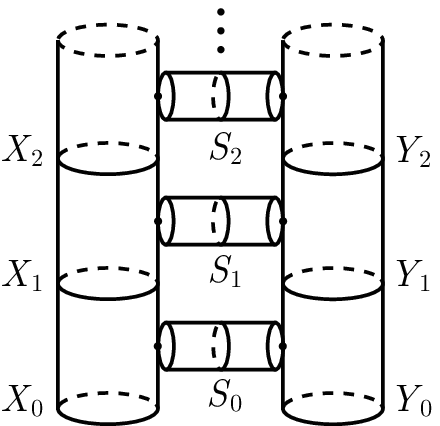}}
    \caption{Wedge space $\ws{X}{Y}$.}
\label{fig:ws}
\end{figure}
The \emph{wedge space} $\ws{X}{Y}$ based on $X$ and $Y$ (a nonmanifold) is obtained from the disjoint union of the stringers on $X$ and $Y$ by simply wedging on the rungs as shown.
The end-cohomology algebra of $\ws{X}{Y}$ is
\[
	\rci{k}{\ws{X}{Y}}{R} \cong
	\begin{cases}
		\cohom{n}{X}{R} \oplus R[[\sigma]]/R[\sigma] \oplus \cohom{n}{Y}{R} &\tn{if $k = n$,}\\
		\cohom{k}{X}{R} \oplus 0 \oplus \cohom{k}{Y}{R} &\tn{if $2\leq k \leq n-1$,}\\
		\cohom{1}{X}{R} \oplus R[[\tau]]/R[\tau] \oplus \cohom{1}{Y}{R} &\tn{if $k = 1$,}\\		
		0 &\tn{otherwise}
	\end{cases}
\]
where the cup product is coordinatewise in the direct sum and vanishes in the middle coordinate.
While the end-cohomology algebra of the wedge space $\ws{X}{Y}$ bears a striking resemblance
to that of the ladder manifold $\la{X}{Y}$, they are not isomorphic.

\begin{corollary}\label{spaces_distinct}
The end-cohomology algebras of the ladder manifold $\la{X}{Y}$ and the wedge space $\ws{X}{Y}$ are not isomorphic.
In particular, these two spaces are not proper homotopy equivalent.
\end{corollary}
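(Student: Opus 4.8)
The plan is to show that the graded $\Z$-algebras $\zrci{\ast}{\la{X}{Y}}$ and $\zrci{\ast}{\ws{X}{Y}}$ are not isomorphic; the statement about proper homotopy equivalence then follows at once, since by Corollary~\ref{phiec} a proper homotopy equivalence would induce a graded $\Z$-algebra isomorphism on end-cohomology. Unlike the classification results earlier in the paper, no ring-theoretic input is needed here: the underlying graded abelian groups already differ, and the entire argument is a single-degree observation.

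First I would note that a graded $\Z$-algebra isomorphism restricts, in each degree $k$, to a $\Z$-module isomorphism of the degree-$k$ components, so it suffices to exhibit one degree in which the two components are non-isomorphic as $\Z$-modules. The natural candidate is degree $n$. Since $X$ and $Y$ are closed, connected, oriented $n$-manifolds we have $H^n(X;\Z)\cong H^n(Y;\Z)\cong\Z$, so the formulas recorded above, together with Remark~\ref{topdclm} and the discussion opening Appendix~\ref{appclm}, give
\[
\zrci{n}{\la{X}{Y}} \cong (\Z\oplus\Z[[\sigma]]\oplus\Z)/K
\qquad\text{and}\qquad
\zrci{n}{\ws{X}{Y}} \cong \Z\oplus\Z[[\sigma]]/\Z[\sigma]\oplus\Z.
\]
By Corollary~\ref{nonisoduals} these two $\Z$-modules are not isomorphic; concretely, their dual modules are $\Z$ and $\Z^2$ by Corollaries~\ref{modKdual} and~\ref{wedgedual}, respectively. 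Hence no graded $\Z$-algebra isomorphism between the full end-cohomology algebras can exist.

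Finally, if $\la{X}{Y}$ and $\ws{X}{Y}$ were proper homotopy equivalent, then by Corollary~\ref{phiec} their end-cohomology algebras with $\Z$ coefficients would be isomorphic as graded $\Z$-algebras, contradicting the previous paragraph. I do not expect a real obstacle here; the only point requiring care is the bookkeeping — that a graded isomorphism does split into $\Z$-module isomorphisms degree by degree, and that the closed/connected/oriented hypotheses on $X$ and $Y$ are precisely what identify the first and third degree-$n$ summands with $\Z$, so that the relevant module is literally the $(\Z\oplus\Z[[\sigma]]\oplus\Z)/K$ analyzed in Appendix~\ref{appclm}. The substantive content was already carried out there, in Corollary~\ref{nonisoduals}.
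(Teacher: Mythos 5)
Your proposal is correct and matches the paper's own argument: the paper likewise observes that the degree-$n$ components $(\Z\oplus\Z[[\sigma]]\oplus\Z)/K$ and $\Z\oplus\Z[[\sigma]]/\Z[\sigma]\oplus\Z$ are non-isomorphic by Corollary~\ref{nonisoduals} (via their duals $\Z$ and $\Z^2$), so no graded isomorphism exists and proper homotopy equivalence is ruled out by Corollary~\ref{phiec}. Nothing further is needed.
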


\begin{proof}
The degree $n$ subgroups of these two algebras are nonisomorphic by Corollary~\ref{nonisoduals}.
\end{proof}

\subsection{Height in Abelian Groups}\label{app:height}

The notion of the height of an element plays an important role in the study of infinite abelian groups.
Let $G$ be an additive abelian group, $g\in G$, and $p>1$ a prime integer.
Consider the following equation in $G$ for integers $k\geq0$
\begin{equation}\label{heighteq}
p^k x = g \tag{$\dag$}
\end{equation}
The \emph{height of $g\in G$ at $p$} is
\[
H_p(g):=k
\]
where $k\in\Z_{\geq0}$ is maximal such that~\eqref{heighteq} has a solution $x\in G$.
If~\eqref{heighteq} has a solution for every $k\in\Z_{\geq0}$, then we write $H_p(g)=\infty$.
Let $2=p_1<p_2<\cdots$ be the primes.
The \emph{height of $g\in G$} is the sequence
\begin{equation}\label{heightgen}
H(g):=(H_{p_1}(g),H_{p_2}(g),\ldots) \in \{0,1,2,\ldots,\infty \}^{\N} \tag{$\ddag$}
\end{equation}
For example, $H(0)=(\infty,\infty,\infty,\ldots)$ in every abelian group $G$.
If $G$ is a field of characteristic zero, then $H(g)=(\infty,\infty,\infty,\ldots)$ for all $g\in G$.
If $G$ is a field of characteristic $p$ and $0\ne g\in G$, then $H_p(g)=0$ and
$H_q(g)=\infty$ for each prime $q\ne p$.
In general, the height of $g$ depends on $G$, since the solutions $x$ of~\eqref{heighteq} are required to lie in $G$.
For the sake of intuition, it is useful to look at some examples.
In what follows, the partial ordering $\preceq$ on $\{0,1,2,\ldots,\infty \}^{\N}$ is defined by:
$(m_i)_{i\in \N} \preceq (n_i)_{i\in\N}$ if and only if $m_i\leq n_i$ for all $i\in\N$.

\begin{example}\label{exampleZ}
Let $G=\Z$ and $g=1400$. The prime factorization $1400=2^{3}5^{2}7$ reveals that
$H_2(g)=3$, $H_5(g)=2$, $H_7(g)=1$, and $H_p(g)=0$ for all other primes $p$.
Inserting this data into~\eqref{heightgen} give us
\[
H(1400)=(3,0,2,1,0,0,0,\ldots)
\]
More generally, if $0\ne g = \prod_{i=1}^{\infty} p_i^{k_i}$, then
\[
H(g)=(k_1,k_2,k_3,\ldots)
\]
where all $k_i<\infty$ and all but finitely many $k_i$ are zero.
\end{example}

\begin{example}\label{exampleS}
Let $G=\Z[x]$ and $\alpha=\sum_{i=0}^n a_i x^i\in\Z[x]$ where $a_n\ne0$. 
Recall that the \emph{content of $\alpha$} is
\[
c(\alpha)=\gcd\pa{a_0,a_1,\ldots,a_n}\in\Z_{>0}
\]
It is straightforward to verify that the height of $\alpha\in\Z[x]$ equals the 
height of $c(\alpha)\in\Z$.
Thus, as in the previous example, all heights of nonzero elements
contain finitely many nonzero entries each of which is finite.
\end{example}

\begin{example}\label{exampleP}
Let $G=\Z[[x]]$ and $0\ne\alpha=\sum_{i=0}^{\infty} a_i x^i\in\Z[[x]]$.
We define the \emph{content of $\alpha$} to be
\[
c(\alpha)=\gcd\pa{a_0,a_1,a_2,\ldots}\in\Z_{>0}
\]
This is well-defined since some $a_n\ne 0$ and
\[
\infty > \gcd(a_0,a_1,\ldots,a_n)\geq \gcd(a_0,a_1,\ldots,a_{n+1}) \geq \gcd(a_0,a_1,\ldots,a_{n+2}) \geq \cdots
\]
Only finitely many of these inequalities can be strict by the well-ordering principle.
In particular, $c(\alpha)=\gcd\pa{a_0,a_1,\ldots,a_N}$ for some nonnegative integer $N=N(\alpha)$.
Again, it is straightforward to verify that the height of $\alpha\in\Z[[x]]$ equals the 
height of $c(\alpha)\in\Z$, which yields exactly the same collection of heights as in the previous two examples.
Therefore, even though $\Z[[x]]$ is uncountable, its elements realize only countably many heights.
\end{example}

\begin{example}\label{height_ex}
In $G=\Z[[x]]/\Z[x]$ height becomes more interesting. Consider, for example
\begin{align*}
H\pa{\ecl{2+2x+2x^2+\cdots}}	&= (1,0,0,\ldots)\\
H\pa{\ecl{2^35^0+2^35^1x+2^35^2x^2+\cdots}}	&= (3,0,\infty,0,0,\ldots)\\
H\pa{\ecl{2^1x+2^23^2x^2+2^33^35^3x^3+\cdots}}	&= (\infty,\infty,\infty,\ldots)
\end{align*}
The key here is that, for any power series $\sum_{i=0}^{\infty} a_i x^i\in\Z[[x]]$ representing an
element of $\Z[[x]]/\Z[x]$, we may ignore any finite initial sum $\sum_{i=0}^{j} a_i x^i$.
This leads to the following.
\end{example}

\begin{lemma}\label{heightrealize}
Let $G=\Z[[x]]/\Z[x]$ and let $h=(h_1,h_2,\ldots)\in\{0,1,2,\ldots,\infty\}^{\N}$ be a height.
Then, there exists $g\in G$ such that $H(g)=h$.
\end{lemma}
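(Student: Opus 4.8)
The plan is to exhibit an explicit power series whose coefficients carry exactly the prescribed prime-power divisibility, exploiting (as in Example~\ref{height_ex}) the fact that in $\Z[[x]]/\Z[x]$ one may discard any finite initial segment of a representing series. Let $2=p_1<p_2<\cdots$ enumerate the primes. For each $i\in\Z_{\geq 1}$ put
\[
a_i:=\prod_{n=1}^{i} p_n^{\min(h_n,\,i)}\in\Z_{>0},
\]
where $\min(h_n,i)=i$ when $h_n=\infty$; this is a finite product of positive integers, hence a genuine positive integer. Set $g:=\ecl{\sum_{i=1}^{\infty}a_i x^i}\in G$. The claim is that $H(g)=h$.

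The one computational fact to record first is a description of divisibility in $G$: for $\gamma=\sum_i c_i x^i\in\Z[[x]]$ and a prime $p$, we have $p^k\mid\gamma$ in $\Z[[x]]$ if and only if $p^k\mid c_i$ for every $i$ (divide coefficient-wise); consequently the equation $p^k y=\ec{\gamma}$ has a solution $y\in G$ if and only if $p^k\mid c_i$ for all sufficiently large $i$ — in one direction one subtracts the finite initial segment, which is $0$ in $G$, and in the other one notes that $\gamma-p^k\eta\in\Z[x]$ forces $p^k\mid c_i$ beyond the degree of that polynomial. Because the $p_m$ are distinct primes, the $p_n$-adic valuation of $a_i$ equals $\min(h_n,i)$ when $n\leq i$ and $0$ when $n>i$. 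Fixing $n$: if $h_n<\infty$ then $v_{p_n}(a_i)=h_n$ for all $i\geq\max(n,h_n)$; if $h_n=\infty$ then $v_{p_n}(a_i)=i$ for all $i\geq n$.

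From this the heights are read off coordinate by coordinate. When $h_n<\infty$: the coefficients $a_i$ are divisible by $p_n^{h_n}$ once $i\geq\max(n,h_n)$, so $p_n^{h_n}y=g$ is solvable and $H_{p_n}(g)\geq h_n$; but $v_{p_n}(a_i)=h_n$ exactly for all large $i$, so $p_n^{h_n+1}\nmid a_i$ for all large $i$, whence $p_n^{h_n+1}y=g$ has no solution and $H_{p_n}(g)=h_n$. When $h_n=\infty$: for every $k$ we have $p_n^k\mid a_i$ once $i\geq\max(n,k)$, so $p_n^k y=g$ is solvable for all $k$, i.e. $H_{p_n}(g)=\infty$. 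Hence, by~\eqref{heightgen}, $H(g)=(H_{p_1}(g),H_{p_2}(g),\ldots)=(h_1,h_2,\ldots)=h$, as required.

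The only real subtlety — and the reason for the particular shape of $a_i$ — is that the naive choice $a_i=\prod_n p_n^{h_n}$ independent of $i$ need not define an integer when infinitely many $h_n$ are positive; truncating the product at $n\leq i$ and capping each exponent at $i$ repairs this while simultaneously forcing the valuation at each prime with $h_n=\infty$ to grow without bound. Beyond that, the argument is routine prime-valuation bookkeeping, with no swindle or delicate limiting needed.
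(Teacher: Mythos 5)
Your proposal is correct and is essentially the paper's own proof: the coefficients $a_i=\prod_{n=1}^{i}p_n^{\min(h_n,i)}$ are exactly the paper's $a_i=p_1^{e(1,i)}\cdots p_i^{e(i,i)}$ with $e(n,i)=\min(h_n,i)$, and the resulting $g=\ecl{\sum a_i x^i}$ is the same element. The only difference is that you spell out the verification (the coefficient-wise divisibility criterion in $\Z[[x]]/\Z[x]$ and the valuation bookkeeping) which the paper leaves implicit, and that is done correctly.
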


\begin{proof}
The idea of the proof is contained in Example~\ref{height_ex}.
Let $2=p_1<p_2<\cdots$ be the rational primes.
For each integer $i\geq1$, define
\[
a_i:=p_1^{e(1,i)} p_2^{e(2,i)} \cdots p_i^{e(i,i)} \in \Z_{>0}
\]
where
\[
	e(n,i) :=
	\begin{cases}
		i &\tn{if $h_n\geq i$}\\
		h_n &\tn{if $h_n<i$}
	\end{cases}
\]
Let $g=\ecl{\sum_{i=1}^{\infty} a_i x^i} \in G$.
Then, the height of $g\in G$ is $H(g)=h$.
\end{proof}

For a general discussion of height, see Fuchs~\cite[Ch.~VII]{fuchsag}.
For our purposes, the crucial facts are Lemma~\ref{heightrealize} and the following.

\begin{lemma}
Let $\phi:G\to G'$ be a homomorphism of abelian groups. Then, for all $g\in G$ we have
$H(g)\preceq H(\phi(g))$.
\end{lemma}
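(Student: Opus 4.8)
The plan is to argue one prime at a time, working directly from the definition of height as solvability of the divisibility equation~\eqref{heighteq}. Fix a prime $p>1$ and set $k:=H_p(g)$. First I would dispense with the finite case $k<\infty$: by definition there is $x\in G$ with $p^k x=g$. Since $\phi$ is a homomorphism of abelian groups it preserves integer multiples, so applying $\phi$ to this equation yields $p^k\phi(x)=\phi(g)$. This exhibits a solution $y=\phi(x)\in G'$ of $p^k y=\phi(g)$, hence $H_p(\phi(g))\geq k=H_p(g)$.

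Next I would treat the case $k=\infty$: then for every $j\in\Z_{\geq 0}$ there is some $x_j\in G$ with $p^j x_j=g$, and applying $\phi$ gives $p^j\phi(x_j)=\phi(g)$ for every $j$, so $H_p(\phi(g))=\infty=H_p(g)$. In either case $H_p(g)\leq H_p(\phi(g))$.

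Finally, since this inequality holds for every prime, the definition~\eqref{heightgen} of $H(-)$ as the sequence $(H_{p_1}(-),H_{p_2}(-),\ldots)$ together with the definition of $\preceq$ as the coordinatewise order on $\{0,1,2,\ldots,\infty\}^{\N}$ immediately gives $H(g)\preceq H(\phi(g))$, completing the proof. There is no serious obstacle here; the only points requiring a word of care are the bookkeeping for the $\infty$ entries and the elementary observation that $p^k\phi(x)=\phi(\,p^k x\,)=\phi(g)$ really does follow from $\phi$ respecting the additive (hence $\Z$-module) structure.
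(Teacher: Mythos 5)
Your proof is correct and follows essentially the same route as the paper: apply $\phi$ to a solution of $p^k x = g$ to get $p^k\phi(x)=\phi(g)$, giving $H_p(g)\leq H_p(\phi(g))$ for each prime and hence $H(g)\preceq H(\phi(g))$. The paper's argument is the same, only stated more tersely (it leaves the $k=\infty$ bookkeeping implicit).
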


\begin{proof}
Let $p>1$ be a prime integer and suppose $g=p^kx$ in $G$.
Then $\phi(g)=\phi(p^kx)=p^k\phi(x)$ in $G'$.
Therefore, $H_p(g)\leq H_p(\phi(g))$.
\end{proof}

\begin{corollary}\label{heightiso}
Let $\phi:G\to G'$ be an isomorphism of abelian groups. Then, for all $g\in G$ we have
$H(g)=H(\phi(g))$.
\end{corollary}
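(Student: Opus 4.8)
The plan is to deduce this corollary immediately from the preceding Lemma by applying that Lemma to $\phi$ and to its inverse. First I would invoke the Lemma with the homomorphism $\phi\colon G\to G'$ to obtain $H(g)\preceq H(\phi(g))$ for every $g\in G$. Since $\phi$ is an isomorphism, $\phi^{-1}\colon G'\to G$ is also a homomorphism of abelian groups, so applying the Lemma to $\phi^{-1}$ and the element $\phi(g)\in G'$ yields $H(\phi(g))\preceq H\bigl(\phi^{-1}(\phi(g))\bigr)=H(g)$. Combining these two inequalities gives the chain $H(g)\preceq H(\phi(g))\preceq H(g)$.

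The remaining step is to appeal to antisymmetry of the partial order $\preceq$ on $\{0,1,2,\ldots,\infty\}^{\N}$: if $m_i\leq n_i$ and $n_i\leq m_i$ for every $i\in\N$, then $m_i=n_i$ for every $i$, so the two sequences coincide. Applying this to the sequences $H(g)$ and $H(\phi(g))$ gives $H(g)=H(\phi(g))$, as desired.

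There is no serious obstacle here; the only point requiring a moment's attention is that the preceding Lemma is stated for an arbitrary group homomorphism, with no injectivity or surjectivity hypothesis, so it applies verbatim to $\phi^{-1}$. (This invariance is exactly what is needed in the proof of the Main Theorem, where the height of $\pi(v)$ is invoked as an isomorphism invariant of the end-cohomology algebra.)
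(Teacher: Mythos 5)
Your argument is correct and is exactly the intended one: the paper states this corollary without a written proof precisely because it follows from applying the preceding lemma to both $\phi$ and $\phi^{-1}$ and using antisymmetry of $\preceq$, which is what you do. Nothing to add.
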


\end{document}